\documentclass[oneside,english]{amsart}
\usepackage{geometry}
\usepackage{setspace}

\usepackage{peambl}

\newcommand{\Aviv}[1]{{\color{purple}{Aviv: #1}}}

\title{Compact Median Algebras are $\mu$-Boundaries in a unique way}

\author{Uri Bader}

\author{Aviv Taller}
\date{}

\begin{document}

\maketitle
\begin{abstract}
We consider group actions on compact median algebras.
We show that, given a generating probability measure $\mu$ on the acting group and under suitable conditions on the median algebra, it could be realized in a unique way as a $\mu$-boundary in the sense of Furstenberg. 
Along the way, we prove some structural results.
\end{abstract}
\section{Introduction}

In this work we study group actions on compact topological median algebras.
Our main goal is to understand \textit{stationary measures} for such actions.
This is a continuation of \cite{BaderTaller}, where we assumed the acting group is amenable and studied the associated invariant measures.
However, here we remove the amenability assumption, thus invariant measures do not exist in general, while stationary measures do exist. 
Our main result is Theorem~\ref{thm:stationary-nofactors} which shows that, under suitable assumptions, there exists a unique stationary measure making the given median space a boundary in the sense of Furstenberg. 

We first state Proposition~\ref{prop:structure} below, which is a general structural result - it describes a \textit{maximal cubical factor} of a median algebra.
Recall that a \emph{cube} is a median algebra isomorphic to $\{0,1\}^n$ for some cardinal $n$, called the \emph{dimension} of the cube. A finite dimensional cube is called a \emph{finite cube}. 

Given a topological group $G$, a topological median algebra endowed with an action of $G$ by automorphisms such that the action map $G\times M\to M$ is continuous is called a topological $G$-median algebra.
A $G$-equivariant continuous median map of topological $G$-median algebras is called a \emph{morphism}. 
As usual in the theory of dynamical systems, we study \emph{minimal actions}, that is we assume that our median algebra does not contain any non-empty, proper closed invariant sub-median algebra.
As in \cite{BaderTaller}, we will always assume the underlying median algebra is \textit{sclocc}, that is \textit{second countable, locally open-convex and compact}, see \S\ref{sec:medianalgebras} for precise definitions.
Moreover, we impose one more condition on our algebras - we assume that all intervals are countable (which includes finite, by our convention).

\begin{mpro} \label{prop:structure}
Let $G$ be a topological group and let $M$ be a $G$-median algebra.
Assume that $M$ is sclocc and with countable intervals.
Then there exist topological $G$-median algebras $M'$ and $C$ such that $M\simeq M' \times C$, $C$ is a finite cube and every surjective morphism $M\to C'$, where $C'$ is a $G$-cube, is the composition of the projection map $M\to C$ and a unique morphism $C\to C'$.
\end{mpro}

In case $M\simeq M'$, that is $C$ is trivial in Proposition~\ref{prop:structure}, we say that $M$ \textit{has no cubical factor}.
Recall that, given a probability measure $\mu$ on $G$, a probability measure $\nu$ on $M$ is said to be \textit{$\mu$-stationary} if $\mu*\nu=\nu$, where $\mu*\nu$ is the push forward of the measure $\mu\times \nu$ under the action map $G\times M\to M$.

\setcounter{mthm}{1}
\begin{mthm}\label{thm:stationary-nofactors}
Let $G$ be a countable discrete group and let $M$ be a minimal topological $G$-median algebra.
Assume that $M$ is sclocc with countable intervals and with no cubical factor.
Let $\mu$ be a generating probability measure on $G$.
Then there exists a unique $\mu$-stationary probability measure $\nu$ on $M$.
Moreover, the $G$-space $(M,\nu)$ is a $\mu$-boundary in the sense of Furstenberg.
\end{mthm}

Given $\mu$, a generating probability measure on $G$, we recall Furstenberg's correspondence between $G$-maps $B(G,\mu)\to \Prob(M)$ and $\mu$-stationary measures on $M$, where $B(G,\mu)$ denotes the corresponding \textit{Furstenberg-Poisson boundary} of $G$, see \cite[Theorem 2.16]{Bader2006}.
The uniqueness of the stationary measure $\nu$ in Theorem~\ref{thm:stationary-nofactors} is thus equivalent to having a unique $G$-map $\phi:B(G,\mu)\to \Prob(M)$ and the fact $(M,\nu)$ is a $\mu$-boundary is equivalent to fact that the image of $\phi$ consists of $\delta$-measures in $\Prob(M)$.

We will recall in \S\ref{sec:ergodic} the definition of a \textit{boundary pair} for $G$, as discussed in \cite{Bader2014}.
By \cite[Theorem 2.7]{Bader2014}, the future and past Furstenberg-Poisson boundaries associated with a generating probability measure form a boundary pair.
Theorem~\ref{thm:stationary-nofactors} will follow from the following.

\begin{mthm}\label{thm:nocubicalfactor}
Let $G$ be a countable discrete group and let $M$ be a minimal topological $G$-median algebra.
Assume that $M$ is sclocc with countable intervals and with no cubical factor.
Let $(B_-,B_+)$ be a boundary pair for $G$.
Then there are unique (up to a.e equivalence) a.e defined measurable $G$-maps $\phi_-:B_-\to M$ and $\phi_+:B_+\to M$. Moreover, these maps are the unique (up to a.e equivalence) a.e defined measurable $G$-maps $B_-\to \Prob(M)$, $B_+\to \Prob(M)$.
\end{mthm}

We denote by $\Cub(M)$ the set of all cubes in $M$ and endow it with the Chabouty topology.
Any measurable map $B_{\pm}\rightarrow \Cub(M)$, give rise to a map $B_{\pm}\rightarrow \Prob(M)$, by sending every cube to its unique uniform probability measure.
Thus, theorem \ref{thm:nocubicalfactor} implies in particular the uniqueness of the maps $B_{\pm}\rightarrow \Cub(M)$.
The following is the generalization of the above theorem, where we consider the possibility of a non-trivial cubical factor.

\begin{mthm} \label{thm:main}
Let $G$ be a countable discrete group and let $M$ be a minimal topological $G$-median algebra.
Assume that $M$ is sclocc with countable intervals.
Let $(B_-,B_+)$ be a boundary pair for $G$.
Then, there exist unique (up to a.e equivalence) a.e defined measurable $G$-maps $\phi_-:B_-\to \Cub(M)$ and $\phi_+:B_+\to \Cub(M)$.
Under the decomposition $M\simeq M' \times C$ given in Proposition~\ref{prop:structure},
$\phi_-$ and $\phi_+$ correspond to the products of the unique maps $\phi'_-:B_-\to M'$ and $\phi'_+:B_+\to M'$ given in Theorem~\ref{thm:nocubicalfactor} and the constant maps 
$B_\pm\to \{C\} \in \Cub(C)$.
\end{mthm}

\begin{example}
    Let $G:=F_2\times \{\pm1\}$ where $F_2:=\langle a, b\rangle$ is the free group generated by two elements, $\{\pm1\}\cong \bbZ/2$.
    Let $M:= (T_4\cup \partial T_4)\times \{\pm1\}$, where the 4-regular tree $T_4$ is identified with the Cayley graph of $F_2$ and $\partial T_4$ with its boundary.
    We take the usual compact median structure on $T_4\cup \partial T_4$ and the corresponding product structure on $M$.
    We consider the natural action of $G$ on $M$.  
This action is clearly minimal.
With respect to the measure $\mu(\{(x,\pm1)\})=1/8$ for $x\in \{a^{\pm 1},b^{\pm 1}\}$, the Furstenberg Poisson boundary $B(G,\mu)$ of $G$ is isomorphic to $\partial T_4$ endowed with the standard stationary measure. 
Then the map 
\[ \partial T_4 \to \Cub(M), \quad  \theta \mapsto \theta \times \{\pm1\} \]
is the unique 
(up to a.e equivalence) a.e defined measurable $G$-map $\partial T_4 \to \Cub(M)$.
\end{example}

It is worth noting that in case of a non-trivial cubical factor, it is no longer true that there are unique maps $B_{\pm}\to \Prob(M)$.
The following example illustrates this fact.

\begin{example}
    Consider the subgroup $G\leq \{\pm1\}^3$ consisting of all the elements $g=(x,y,z)$ such that $xyz=1$.
    Note that its action on the cube $\{\pm1\}^3$ is minimal.
    For every $t\in [0,1]$, let $\mu_t$ be the probability measure on $\{\pm1\}^3$
    which takes the value $t/8$ on elements of $G$ and the value $(1-t)/8$ on elements not in $G$.
    This is a continuum family of $G$-invariant probability measures.
\end{example}

\section{Median Algebras}\label{sec:medianalgebras}

This section will focus on essential results and definitions in the context of median algebras that are required for this paper.
For an extensive survey on median algebras we refer the reader to \cite{rol98, fioBoundary, bowMed}.

A  \textit{median algebra} is a set $M$ with a ternary operator $m:M^3\rightarrow M$, satisfying the following three axioms. For every $x,y,z,u,v\in M$ 
\begin{enumerate}
    \item $m(x,y,z)=m(x,z,y)=m(y,x,z)$,
    \item $m(x,x,y)=x$, and
    \item $m(m(x,y,z),u,v)=m(x,m(y,u,v),m(z,u,v))$
\end{enumerate}
A \textit{median morphism} between two median algebras $M$ and $N$ is a map $\phi:M\rightarrow N$ such that $\phi\circ m=m\circ \phi^3$. For two points $x$ and $y$ in $M$, the \textit{interval} $[x,y]$ is defined as the set of all points $z\in M $, such that $m(x,y,z)=z$. 
A subset $C\subset M$ is said to be \textit{convex}, if for every $x,y\in C$, $[x,y]\subset C$. 
The following is a median algebra version of Helly's theorem:

\begin{lemma}[Helly's Theorem, {\cite[Theorem 2.2]{rol98}}] \label{lem:helly}
 If $C_1,...,C_n$ are convex, and for every $i\neq j,\ \ C_i\cap C_j\neq \varnothing$, then also $\overset{n}{\underset{i=1}{\cap}}C_i \neq \varnothing$.
\end{lemma}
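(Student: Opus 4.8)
The plan is to prove the statement by induction on $n$, with the case $n=3$ carrying all the geometric content. For $n\le 2$ there is nothing to prove.

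For $n=3$, I would pick points $x_{12}\in C_1\cap C_2$, $x_{13}\in C_1\cap C_3$ and $x_{23}\in C_2\cap C_3$, and set $p:=m(x_{12},x_{13},x_{23})$. The key elementary fact about median algebras that I would invoke — an immediate consequence of axioms (1)--(3), and standard in the references already cited — is that $m(a,b,c)$ lies in each of the intervals $[a,b]$, $[b,c]$, $[a,c]$; equivalently, $m\bigl(a,b,m(a,b,c)\bigr)=m(a,b,c)$. Applying this, $p\in[x_{12},x_{13}]$, and since $x_{12},x_{13}\in C_1$ and $C_1$ is convex, $p\in C_1$. Symmetrically $p\in[x_{12},x_{23}]\subseteq C_2$ and $p\in[x_{13},x_{23}]\subseteq C_3$, so $p\in C_1\cap C_2\cap C_3$.

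For the inductive step, assume $n\ge 4$ and that the result holds for $n-1$ convex sets. Given pairwise-intersecting convex $C_1,\dots,C_n$, I would replace $C_{n-1}$ and $C_n$ by the single set $D:=C_{n-1}\cap C_n$, which is again convex (an intersection of convex sets is convex, directly from the definition). The family $C_1,\dots,C_{n-2},D$ is again pairwise intersecting: for indices $i,j\le n-2$ this is the hypothesis, while for $i\le n-2$ the three sets $C_i,C_{n-1},C_n$ are pairwise intersecting, so by the already-established case $n=3$ we get $C_i\cap D=C_i\cap C_{n-1}\cap C_n\neq\varnothing$. This is an $(n-1)$-element family of pairwise-intersecting convex sets, so by the inductive hypothesis its total intersection is non-empty; but that intersection is exactly $C_1\cap\cdots\cap C_n$.

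The only real content — and hence the only place where one could get stuck — is the case $n=3$, and within it the elementary observation that the median of three points lies in each of the three intervals those points span; all the rest is bookkeeping. I would either cite this basic property from the median-algebra literature listed above or derive it in a line or two directly from the axioms.
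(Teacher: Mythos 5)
Your proof is correct: the $n=3$ case via the median point $m(x_{12},x_{13},x_{23})$ together with the induction that replaces $C_{n-1},C_n$ by their (convex) intersection is the standard argument, and the key fact $m(a,b,c)\in[a,b]$ is exactly the observation stated in \S\ref{sec:medianalgebras} (it follows from axioms (2) and (3), e.g.\ $m(m(c,a,b),a,b)=m(c,m(a,a,b),m(b,a,b))=m(c,a,b)$). The paper does not reprove this lemma but cites \cite[Theorem 2.2]{rol98}, whose proof is essentially the same argument you give, so there is nothing to compare beyond noting the approaches coincide.
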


The \textit{convex hull} of a subset $S\subset M$, denoted by $\Conv(S)$, is the smallest convex set containing S.
For two subsets $A,B\subset M$, the \textit{join} of A and B, denoted by $[A,B]$, is the union of all intervals $[a,b]$, for $a\in A$ and $b\in B$. 
For two convex sets, the join gives a neat description for the convex hull of their union. Namely:

\begin{lemma}[{\cite[Proposition 2.3]{rol98}}] \label{lem:convexisjoin}
    If $C,C'\subset M$ are two convex sets, then, $\Conv(C\cup C')=[C,C']$
\end{lemma}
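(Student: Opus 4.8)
The plan is to prove the two inclusions separately. The inclusion $[C,C']\subseteq\Conv(C\cup C')$ is immediate: $\Conv(C\cup C')$ is convex and contains $c$ and $c'$ for all $c\in C$, $c'\in C'$, hence contains every interval $[c,c']$, hence their union. For the reverse inclusion it suffices to show that the join $[C,C']$ is itself convex, since it visibly contains $C\cup C'$ --- fixing any $c'\in C'$, every $c\in C$ lies in $[c,c']$ because $m(c,c',c)=c$, and symmetrically for $C'$, so $[C,C']\supseteq C\cup C'$ (we assume, as the statement requires, that $C$ and $C'$ are nonempty) --- and is therefore forced to contain the smallest convex superset $\Conv(C\cup C')$.

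So let $z_1,z_2\in[C,C']$, say $z_1\in[c_1,c_1']$ and $z_2\in[c_2,c_2']$ with $c_1,c_2\in C$ and $c_1',c_2'\in C'$, let $z\in[z_1,z_2]$, and seek $c\in C$, $c'\in C'$ with $z\in[c,c']$. The natural candidates are the ``median projections'' $c:=m(z,c_1,c_2)$ and $c':=m(z,c_1',c_2')$. Since $m(x,y,w)\in[x,y]$ always holds, we get $c\in[c_1,c_2]\subseteq C$ by convexity of $C$, and likewise $c'\in[c_1',c_2']\subseteq C'$; it remains only to verify $z\in[c,c']$.

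For this last point I would invoke the separation theory of median algebras: every convex set --- in particular $[c,c']=\Conv(\{c,c'\})$ --- is the intersection of the halfspaces containing it, and for any halfspace $H$ (meaning both $H$ and $M\setminus H$ are convex) one has $m(a,b,d)\in H$ if and only if at least two of $a,b,d$ belong to $H$. Accordingly, let $H$ be a halfspace with $c,c'\in H$ and suppose toward a contradiction that $z\notin H$. From $c=m(z,c_1,c_2)\in H$ with $z\notin H$ the two-out-of-three property forces $c_1,c_2\in H$; from $c'=m(z,c_1',c_2')\in H$ with $z\notin H$ it forces $c_1',c_2'\in H$. Then $z_1\in[c_1,c_1']\subseteq H$ and $z_2\in[c_2,c_2']\subseteq H$ by convexity of $H$, so $z\in[z_1,z_2]\subseteq H$, contradicting $z\notin H$. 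Hence $z$ lies in every halfspace containing $\{c,c'\}$, i.e.\ $z\in[c,c']\subseteq[C,C']$. This shows $[C,C']$ is convex, finishing the argument.

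The only genuinely nontrivial ingredient is the halfspace separation theorem for median algebras, which I would cite rather than reprove; everything else is bookkeeping with the axioms (that $m(x,y,w)$ lies in $[x,y]$, that endpoints lie in their interval, and that halfspaces and their complements are convex). One could alternatively establish $z\in[c,c']$ by directly expanding $m(c,c',z)$ via axiom (3) together with the relations $m(c_i,c_i',z_i)=z_i$ and $m(z_1,z_2,z)=z$, but I expect the halfspace argument to be appreciably cleaner, the one point needing care being the convention in the degenerate case where $C$ or $C'$ is empty.
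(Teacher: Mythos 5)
Your argument is correct. Note that the paper itself gives no proof of this lemma: it is quoted directly from Roller \cite[Proposition 2.3]{rol98}, so the only comparison available is with the standard reference argument, which establishes convexity of the join by a direct computation with the median axioms (essentially verifying $m\big(m(z,c_1,c_2),m(z,c_1',c_2'),z\big)=z$ from $m(c_i,c_i',z_i)=z_i$ and $m(z_1,z_2,z)=z$), the route you mention only in passing. Your version instead routes everything through the separation theorem (Roller's Theorem 2.8, which the paper also invokes) together with the majority rule for half-spaces: both ingredients are correctly stated and correctly applied, the reduction to showing $[C,C']$ is convex is sound, and your candidate points $c=m(z,c_1,c_2)\in C$, $c'=m(z,c_1',c_2')\in C'$ are exactly the right ones; the identification $[c,c']=\Conv(\{c,c'\})$ as an intersection of half-spaces closes the gap. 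The trade-off is that separation rests on Zorn's lemma, so your proof is less elementary than the purely equational one, but it is shorter and more transparent, and in the context of this paper (which already uses separation throughout) that costs nothing. Your caveat about empty $C$ or $C'$ is well taken, since the join is then empty while the convex hull need not be, so nonemptiness is indeed an implicit hypothesis.
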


A \textit{half-space} $\hs\subset M$, is a convex subset, such that the set $\hs^*:=M\backslash \hs$ is also convex.
An unordered pair of two complementary half-spaces $\mathfrak{w}=\{\hs,\hs^*\}$ is called a \textit{wall}.
Given two sets $A$ and $B$, we say that a half-space $\hs$ is \textit{separating A from B}, if $A\subset\hs$ and $\hs\cap B=\varnothing$. 
We denote by $\Delta(A,B)$ the collection of all half-spaces that separates $A$ from $B$.
We say the a wall $\mathfrak{w}$ \textit{separates} $A$ and $B$ if $\mathfrak{w}\cap \Delta(A,B)$ is non-empty.
One of the main features of median algebras is that for any two convex sets $A$ and $B$, $\Delta(A,B)\neq \varnothing$. See theorem 2.8 in \cite{rol98}.

Let $C\subset M$ be a subset and $x\in M$. An element $y\in C$ is said to be the \textit{gate} of $x$ in $C$, if $y\in [x,z]$ for every $z\in C$.
Note that the gate, if exist, is unique. 
We say that $C$ is \textit{gate-convex}, if for every $x\in M$ there exists a gate $y$ in $C$.
Given a gate-convex set $C$, we denote by $\pi_C$ the map that assign for every element $x\in M$ its gate in $C$.
The map $\pi_C$ is called \textit{gate-projection}.
The following are key features of gate-convex sets:

\begin{lemma}[{\cite[Proposition~2.1]{fioBoundary}}] \label{lem:fio1}
    If a map $\phi:M\rightarrow M$ is a gate-projection, then for all $x,y,z\in M$, we have $\phi(m(x,y,z))=m(\phi(x),\phi(y),\phi(z))=m(\phi(x),\phi(y),z)$.
\end{lemma}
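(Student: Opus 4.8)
The plan is to work directly from the three median axioms and the definition of a gate-projection, together with two elementary facts. Recall that $\phi=\pi_C$ for some gate-convex set $C$, so $\phi(u)\in C$ for all $u$ and $\phi(u)$ is the \emph{unique} element $y\in C$ with $m(u,y,c)=y$ for every $c\in C$; also a gate-convex set is convex, whence $[a,b]\subseteq C$ for $a,b\in C$. The first fact I will use repeatedly is that $m(a,b,c)$ lies in each of $[a,b],[b,c],[c,a]$ — immediate from $m(m(a,b,c),a,b)=m(c,m(a,a,b),m(b,a,b))=m(c,a,b)$ and the symmetry of $m$ — so that $m(a,b,c)$ is a fixed point of $w\mapsto m(a,b,w)$, etc. The second is simply that $c\in C$ implies $\phi(c)=c$.

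I would prove the right-hand equality $m(\phi(x),\phi(y),z)=m(\phi(x),\phi(y),\phi(z))$ first, and deduce the other from it. Write $p=\phi(x)$, $q=\phi(y)$, $r=\phi(z)$ (all in $C$), $s=m(p,q,z)$ and $t=m(p,q,r)$; the goal is $s=t$. The gate condition for $r=\phi(z)$, applied at $c=p$ and $c=q$, gives $m(z,r,p)=r=m(z,r,q)$. Feeding these into axiom (3): first, $m(s,z,r)=m(m(p,q,z),z,r)=m(p,m(q,z,r),m(z,z,r))=m(p,r,z)=r$, so $r\in[s,z]$; second, $m(s,r,q)=m(m(p,q,z),r,q)=m(p,m(q,r,q),m(z,r,q))=m(p,q,r)=t$. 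Now expand the trivial identity $m(s,s,r)=s$ using $s=m(p,q,z)$ and axiom (3): $s=m(m(p,q,z),s,r)=m(p,m(q,s,r),m(z,s,r))=m(p,t,r)$, where the last step substitutes the two computations just made. Since $t=m(p,q,r)\in[p,r]$, we get $m(p,t,r)=t$, hence $s=t$.

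For the left-hand equality $\phi(m(x,y,z))=m(\phi(x),\phi(y),\phi(z))$, I would first restate what was just proved: for all $a,b\in C$ and all $u\in M$ one has $m(a,b,u)=m(a,b,\phi(u))$ (take $x,y$ with $\phi(x)=a$, $\phi(y)=b$). Fix $x,y,z$, put $w=m(x,y,z)$ and $t=m(\phi(x),\phi(y),\phi(z))$; then $t\in[\phi(x),\phi(y)]\subseteq C$, so by uniqueness of the gate it suffices to check that $m(w,t,c)=t$ for every $c\in C$. By axiom (3) and then the restated identity (applicable since $t,c\in C$),
\[ m(w,t,c)=m(x,m(t,c,y),m(t,c,z))=m(x,m(t,c,\phi(y)),m(t,c,\phi(z)))=m(m(x,\phi(y),\phi(z)),t,c), \]
and since $m(x,\phi(y),\phi(z))=m(\phi(y),\phi(z),\phi(x))=t$ by the restated identity once more, this equals $m(t,t,c)=t$. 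Hence $\phi(w)=t$.

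I expect the only genuine obstacle to be the middle paragraph: one must choose exactly the right instances of the associativity axiom (3) so that the gate condition for $\phi(z)$ can be inserted and the free variable $z$ is eliminated; the arrangement into the three computations $r\in[s,z]$, $m(s,r,q)=t$, and the expansion of $m(s,s,r)$ is the crux. Everything after that — reducing the left-hand equality to a single gate-uniqueness check — is purely formal manipulation of the axioms.
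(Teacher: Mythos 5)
Your argument is correct. The paper does not prove this lemma—it simply cites it as Proposition~2.1 of Fioravanti—so there is no in-paper proof to compare against, but your derivation stands on its own. I checked the crucial middle computation in detail: with $p=\phi(x)$, $q=\phi(y)$, $r=\phi(z)$, $s=m(p,q,z)$, $t=m(p,q,r)$, the three applications of axiom~(3) are instantiated correctly; step one gives $m(s,z,r)=m(p,m(q,z,r),m(z,z,r))=m(p,r,z)=r$ via the gate condition $m(z,r,p)=m(z,r,q)=r$, step two gives $m(s,r,q)=m(p,q,r)=t$, and step three expands $s=m(s,s,r)=m(p,m(q,s,r),m(z,s,r))=m(p,t,r)=t$ using the first two together with $t\in[p,r]$. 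The reduction of the left-hand equality to a single gate-uniqueness check via the restated identity $m(a,b,u)=m(a,b,\phi(u))$ for $a,b\in C$ is also sound, since $t\in[\phi(x),\phi(y)]\subseteq C$ by convexity and the chain $m(w,t,c)=m(x,m(t,c,\phi(y)),m(t,c,\phi(z)))=m(m(x,\phi(y),\phi(z)),t,c)=m(t,t,c)=t$ holds for every $c\in C$. This is a clean, purely axiomatic proof where the reference is content to quote the result; the only mild stylistic point is that you might state explicitly at the outset that $\phi$ fixes $C$ pointwise (you use it implicitly to apply the identity just proved with $a,b\in C$ in the roles of $\phi(x),\phi(y)$), but you do flag the fact, so nothing is missing.
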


\begin{lemma}[{\cite[Lemma~2.2]{fioBoundary}}] \label{lem:fio2}
\begin{enumerate}
    \item If $C_1\subset M$ is convex and $C_2\subset M$ is gate-convex, the projection $\pi_{C_2}(C_1)$ is convex. If moreover, $C_1\cap C_2\neq \varnothing$, we have $\pi_{C_2}(C_1)=C_1\cap C_2$.
    \item If $C_1,C_2\subset M $ are gate-convex, the sets $\pi_{C_1}(C_2)$ and $\pi_{C_2}(C_1)$ are gate-convex with gate-projections $\pi_{C_1}\circ \pi_{C_2}$ and $\pi_{C_2}\circ \pi_{C_1}$, respectively.
    \item If $C_1, C_2\subset M$ are gate-convex and $C_1\cap C_2\neq \varnothing $, then $C_1\cap C_2 $ is gate-convex with gate-projection $\pi_{C_2}\circ \pi_{C_1}=\pi_{C_1}\circ \pi_{C_2}$. In particular, if $C_2\subset C_1$, then $\pi_{C_2}=\pi_{C_1}\circ \pi_{C_2}$.
\end{enumerate}
\end{lemma}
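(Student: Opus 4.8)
The plan is to prove the three items in order; Lemma~\ref{lem:fio1} does essentially all the work, and no further machinery is needed.

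For item (1), I would first recall the elementary characterisation that a subset $C\subseteq M$ is convex precisely when $m(u,v,z)\in C$ for all $u,v\in C$ and all $z\in M$ (which uses the equally elementary fact $m(u,v,z)\in[u,v]$). Then, for $a=\pi_{C_2}(x)$, $b=\pi_{C_2}(y)$ with $x,y\in C_1$ and arbitrary $z\in M$, Lemma~\ref{lem:fio1} gives $m(a,b,z)=\pi_{C_2}(m(x,y,z))$, while $m(x,y,z)\in C_1$ by convexity of $C_1$, so $m(a,b,z)\in\pi_{C_2}(C_1)$; hence $\pi_{C_2}(C_1)$ is convex. For the equality in the case $C_1\cap C_2\neq\varnothing$: one inclusion is immediate because each point of $C_1\cap C_2$ is its own gate in $C_2$ and hence lies in $\pi_{C_2}(C_1)$; for the reverse inclusion, if $p\in C_1\cap C_2$ and $x\in C_1$ then the gate $\pi_{C_2}(x)$ lies in the interval $[x,p]$, which is contained in $C_1$ by convexity, so $\pi_{C_2}(x)\in C_1\cap C_2$.

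For item (2), item (1) already tells me $D:=\pi_{C_1}(C_2)$ is convex (apply it with $C_2$ in the role of the convex set and $C_1$ in the role of the gate-convex set), so I only need to identify its gate-projection, and the natural candidate for the gate of $x\in M$ in $D$ is $q:=\pi_{C_1}(\pi_{C_2}(x))$. With $b:=\pi_{C_2}(x)$ and an arbitrary $d=\pi_{C_1}(c)\in D$ (where $c\in C_2$), three applications of the defining property of gates give $b\in[x,c]$, $q\in[b,d]$ and $d\in[c,q]$; substituting $b=m(x,c,b)$ into the identity $q=m(b,d,q)$ and expanding via median axiom~(3) collapses, using $m(c,d,q)=d$ and $m(b,d,q)=q$, to $q=m(x,d,q)$, i.e. $q\in[x,d]$. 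As $d\in D$ was arbitrary, $D$ is gate-convex with gate-projection $\pi_{C_1}\circ\pi_{C_2}$, and swapping the roles of $C_1$ and $C_2$ handles $\pi_{C_2}(C_1)$. I expect this short manipulation of axiom~(3) to be the only step that is not pure bookkeeping.

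For item (3), I would simply combine the first two items. Since gate-convex sets are convex, item (1) applies in both orders and yields $\pi_{C_2}(C_1)=C_1\cap C_2=\pi_{C_1}(C_2)$; feeding these two descriptions into item (2) shows that $C_1\cap C_2$ is gate-convex and that both $\pi_{C_2}\circ\pi_{C_1}$ and $\pi_{C_1}\circ\pi_{C_2}$ serve as its gate-projection, so they coincide by uniqueness of gates. The special case $C_2\subseteq C_1$ then drops out, and can in any case be seen at once from the fact that $\pi_{C_2}(x)\in C_2\subseteq C_1$ is fixed by $\pi_{C_1}$.
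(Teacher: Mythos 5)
Your proof is correct: item (1) follows cleanly from Lemma~\ref{lem:fio1} together with the elementary characterisation of convexity, the axiom-(3) computation in item (2) does establish $q=m(x,d,q)$ (and $q\in\pi_{C_1}(C_2)$ by construction), and item (3) follows from the first two plus uniqueness of gates. Note that the paper itself offers no proof of this statement --- it is quoted from \cite[Lemma~2.2]{fioBoundary} --- so there is nothing to compare against; your argument is a complete, self-contained verification along the standard lines, using only the median axioms and Lemma~\ref{lem:fio1}.
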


It follows by the axioms of the median operator that for every $x,y,z\in M$, $m(x,y,z)\in [x,y]$. 
Moreover, by lemma \ref{lem:fio1} and axiom (3), intervals are gate-convex, and gate-convex sets are convex. 
A criterion for a convex set to be gate-convex, will be given in the context of compact median algebras.

Let $\phi:M\rightarrow M $ be an automorphism of M, and $C$ a gate-convex set. Observe that for every $x\in M$, and every $y\in C$, we have
$$\phi(\pi_C(x))=\phi(m(x,y,\pi_C(x)))=m(\phi(x),\phi(y),\phi(\pi_C(x)))\in [\phi(x),\phi(y)].$$
That is, $\phi(C)$ is gate-convex, and for every $x\in M$, $\pi_{\phi(C)}(x)=\phi(\pi_C(\phi^{-1}(x))
)$.

A \textit{topological median algebra} is a median algebra M, endowed with a Hausdorff topology, with respect to which the median operator is continuous. 
It is called \textit{locally open-convex} if, in addition, each of its points has a basis of open and convex neighborhoods.
The reader should note the differences between our definition of locally open-convex to that of \textit{locally convex} in \cite{fioBoundary}. 

Let $M$ be a compact median algebra. 
For any two points $x,y\in M$, the interval $[x,y]$ is closed, as it is the continuous image of the compact set $\{x\}\times\{y\}\times M$. 
It follows by lemmata 2.6 and 2.7 in \cite{fioBoundary}, that any convex set in $M$ is gate-convex if and only if it is closed, and that gate-projections are continuous.
In particular, the convex hull of two gate-convex sets $C$ and $C'$ is compact, and therefore a gate-convex as well, as it is, by lemma \ref{lem:convexisjoin}, the continuous image of the compact set $C\times C' \times M$.

\begin{lemma}[{\cite[Lemma 2.4]{BaderTaller}}]\label{lem:gateconvexseparbypoint}
    For disjoint non-empty subsets $A,B\subset M$, if $A$ is gate-convex then there exists $a\in A$ such that $\Delta(A,B)=\Delta(a,B)$.
\end{lemma}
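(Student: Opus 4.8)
The plan is to exhibit an explicit point $a\in A$ that works: pick any $b_0\in B$ and set $a:=\pi_A(b_0)$, the gate of $b_0$ in $A$, which exists precisely because $A$ is gate-convex. I claim that $\Delta(A,B)=\Delta(a,B)$ for this choice. The inclusion $\Delta(A,B)\subseteq\Delta(a,B)$ is immediate and needs no hypothesis: any half-space $\hs$ with $A\subseteq\hs$ and $\hs\cap B=\varnothing$ in particular contains the point $a\in A$ and misses $B$, hence lies in $\Delta(a,B)$.

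The content is the reverse inclusion, and here the strategy is a one-line argument by contradiction using the defining property of a gate together with the convexity of half-space complements. Let $\hs\in\Delta(a,B)$, so $a\in\hs$ and $\hs\cap B=\varnothing$; I must show $A\subseteq\hs$. Since $b_0\in B$ and $\hs$ misses $B$, we have $b_0\in\hs^*$. Suppose, towards a contradiction, that some $a'\in A$ lies outside $\hs$, i.e. $a'\in\hs^*$. Then $[b_0,a']\subseteq\hs^*$, because $\hs^*$ is convex and contains both endpoints. But by the defining property of the gate, $a=\pi_A(b_0)\in[b_0,z]$ for every $z\in A$; taking $z=a'$ gives $a\in[b_0,a']\subseteq\hs^*$, contradicting $a\in\hs$. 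Hence no such $a'$ exists, $A\subseteq\hs$, and $\hs\in\Delta(A,B)$, which finishes the proof.

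I do not expect a genuine obstacle here; the argument is short and uses neither compactness, nor second countability, nor the median separation theorem $\Delta(A,B)\neq\varnothing$ — only gate-convexity of $A$. The one point that requires care is choosing the correct direction of projection: one must project the chosen point $b_0$ of $B$ \emph{into} $A$ (note $B$ need not be convex or gate-convex, so $\pi_B$ need not exist), and then the two structural facts that make everything go through are that the complement $\hs^*$ of a half-space is convex and that the gate $\pi_A(b_0)$ lies on every interval joining $b_0$ to a point of $A$. It is also worth recording, for later use, that the same reasoning shows $\Delta(A,B)=\Delta(A,\Conv(B))$, since $\hs\cap B=\varnothing$ is equivalent to $B\subseteq\hs^*$ and hence to $\Conv(B)\subseteq\hs^*$.
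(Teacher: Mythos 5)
Your proof is correct, and it is the natural argument: project an arbitrary $b_0\in B$ into $A$, use that $\hs\in\Delta(a,B)$ forces $b_0\in\hs^*$, and then the gate property $a=\pi_A(b_0)\in[b_0,a']$ combined with convexity of $\hs^*$ rules out any $a'\in A\cap\hs^*$. The paper itself does not reproduce a proof (it cites \cite{BaderTaller}), but this is precisely the expected approach, and your closing remark that the same reasoning yields $\Delta(A,B)=\Delta(A,\Conv(B))$ is also sound.
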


Note that it implies in particular that also $\Delta(B,A)=\Delta(B,a)$.

Our main object is \textit{second countable, locally open-convex, compact}, or in short \textit{sclocc}, median algebra M. 
We fix such a space for the rest of this section. 

Denote by $\Prob(M)$ the space of probability measures on $M$. 
In \cite{BaderTaller} we introduce the \textit{self-median operator} $\Phi: \Prob(M) \rightarrow \Prob(M)$, given by $\mu\mapsto m_*(\mu^3)$. 
A probability measure is said to be \textit{balanced}, if it is invariant under $\Phi$. 

Given a cube $C=\{0,1\}^k\simeq (\bbZ/2\bbZ)^k$, one can easily verify that its corresponding normalized Haar measure $\lambda$, is balanced. 
Thus, each subcube $C\subset M$, induces a balanced measure on M. 
Such a measure in $M$ is called \textit{cubical}.
We have the following characterisation of balanced measures:

\begin{theorem}[{\cite[Theorem A]{BaderTaller}}]\label{thm:balancediscubical}
    Every balanced measure on a sclocc median algebra is cubical.
\end{theorem}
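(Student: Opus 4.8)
The plan is to detect the cubical structure of $M$ by probing $\mu$ through the half-spaces of $M$. As a harmless reduction, note that $m\big((\mathrm{supp}\,\mu)^3\big)$ is compact and equals $\mathrm{supp}\big(m_*(\mu^3)\big)=\mathrm{supp}\,\mu$, so $K:=\mathrm{supp}\,\mu$ is a closed sub-median algebra; it is again sclocc, and $\mu$ is a balanced measure on $K$ of full support, so we may assume $\mathrm{supp}\,\mu=M$. The basic observation is a \emph{dichotomy: $\mu(\mathfrak h)\in\{0,\tfrac12,1\}$ for every half-space $\mathfrak h\subseteq M$.} Indeed $\mathfrak h$ and $\mathfrak h^*=M\setminus\mathfrak h$ are both convex, and $m(x,y,z)$ lies in the interval between any two of $x,y,z$, so the side map $q_\mathfrak h\colon M\to\{0,1\}$ is a morphism onto the two-point median algebra (with median the majority); hence $(q_\mathfrak h)_*\mu$ is balanced on $\{0,1\}$, and with $p:=\mu(\mathfrak h)$ this reads $3p^2-2p^3=p$. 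Taking finite products of such morphisms, for any finite family of walls the pushforward of $\mu$ under the resulting map $M\to\{0,1\}^n$ is a balanced measure on a finite cube; as a warm-up one checks that these are exactly the uniform measures on sub-cubes of $\{0,1\}^n$ (for finite median algebras, full support makes the argument below essentially automatic, since every point and hence every nonempty half-space has positive mass).

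The crux is to show, under the full-support assumption, that \emph{$\mu$ charges every nonempty finite intersection of half-spaces}; in particular $\mu(\mathfrak h)=\tfrac12$ whenever $\mathfrak h,\mathfrak h^*$ are both nonempty. Here is the idea. If $\mu(\mathfrak h)=0$, then $\overline{\mathfrak h^*}$ is closed, convex, hence gate-convex, and of full measure, so $\overline{\mathfrak h^*}=M$; thus $\mathfrak h$ has empty interior. Pick $a\in\mathfrak h$ and a $\mu$-generic $b$ (so $b\in\mathfrak h^*$). By Lemma~\ref{lem:fio1} the gate-projection $\pi_{[a,b]}$ commutes with $m$, so it pushes the balanced measure $\mu$ to a balanced measure on the interval $[a,b]$; and since $\pi_{[a,b]}(x)\in[x,b]\subseteq\mathfrak h^*$ for every $x\in\mathfrak h^*$, this pushed measure assigns mass $0$ to the nonempty proper half-space $\mathfrak h\cap[a,b]$ of $[a,b]$. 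One then wants to iterate this reduction — using local open-convexity to guarantee that the wall genuinely cuts through arbitrarily small convex neighbourhoods of $a$ — and extract a contradiction. I expect this to be the main obstacle: there is no a priori bound on ``dimension'', so turning the interval reduction into a genuine induction (or producing the obstruction directly) is the real content of the theorem.

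Granting the crux, the conclusion is structural. The wall map $Q\colon M\to\{0,1\}^{\mathcal W}$, $x\mapsto(q_\mathfrak w(x))_{\mathfrak w\in\mathcal W}$ over the walls of $M$ (each given an orientation), is an injective morphism because walls separate points, realizing $M$ as a closed sub-median algebra of a cube with $Q_*\mu$ having uniform one-dimensional marginals. For two distinct walls, the pushforward of $\mu$ to $\{0,1\}^2$ is balanced with uniform marginals, hence by the finite classification it is either the uniform measure or a ``diagonal'' one; a diagonal would make a nonempty cell $\mu$-null, contradicting the crux, so the two walls are transverse, with all four cells of measure $\tfrac14$. Running the same argument on $n$ distinct walls (the finite classification on $\{0,1\}^n$, together with the crux ruling out every proper sub-cube as support) shows any $n$ walls are mutually transverse, so all $2^n$ cells are nonempty of measure $2^{-n}$. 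Consequently $Q(M)$ is dense in $\{0,1\}^{\mathcal W}$ and, being compact, equals it; second countability forces $\mathcal W$ to be countable and local open-convexity matches the topology with the product topology, so $M$ is a cube, and $\mu=Q_*\mu$ has value $2^{-n}$ on every basic cylinder, hence is the normalized Haar measure of $M$. Therefore $\mu$ is cubical.
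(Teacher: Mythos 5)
This theorem is quoted from the companion paper \cite{BaderTaller} (Theorem~A there) and is not re-proved in the present paper, so there is no in-paper argument to compare your proposal against; I can only assess it on its own terms. On those terms the proposal is incomplete, and at the point you yourself flag. The reduction to $\mathrm{supp}\,\mu=M$ is fine, the computation that a balanced measure must satisfy $\mu(\mathfrak h)\in\{0,\tfrac12,1\}$ for every measurable half-space is fine, and the endgame (finite families of walls must be transverse once the crux is granted, hence $Q\colon M\to\{0,1\}^{\mathcal W}$ is a surjective embedding onto a cube, hence $Q_*\mu$ is Haar) is a legitimate route. But the crux --- that full support forces a balanced measure to charge every nonempty finite intersection of half-spaces --- is the entire theorem, and you do not establish it. Your gate-projection step produces from a supposed null nonempty half-space $\mathfrak h$ a balanced measure on an interval $[a,b]$ giving mass zero to the nonempty half-space $\mathfrak h\cap[a,b]$, but you give no mechanism by which ``iterating this reduction'' terminates or produces a contradiction: a sclocc median algebra has no rank bound, intervals can be infinite, and there is no induction hypothesis in play. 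As written, this is an observation, not an argument, and you concede as much.

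Two smaller points worth cleaning up if you try to close the gap. First, be explicit about which half-spaces you quantify over --- ``$\mu(\mathfrak h)$'' is only meaningful for measurable $\mathfrak h$, and the paper's own separation machinery (Corollary~\ref{cor:separatingcollection}) supplies a countable family of closed half-spaces adequate for the wall map $Q$; quantifying over all abstract half-spaces is both unnecessary and problematic. Second, in the transversality step you should treat the case of equal or complementary walls separately: if $\mathfrak h\subsetneq\mathfrak h'$ the would-be null cell $\mathfrak h'\cap\mathfrak h^*$ is nonempty and the crux gives a contradiction, but if $\mathfrak h'\in\{\mathfrak h,\mathfrak h^*\}$ there is no forced contradiction and the diagonal pushforward is perfectly fine; you want to conclude the walls coincide, not that something is absurd. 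These are repairable, but the central gap at the crux is not.
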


A half-space $\hs$ in $M$ is said to be \textit{admissible} if $\hs$ is open and $\hs^*$ has non empty interior.
The following is an enhanced separation property: 

\begin{lemma}[{\cite[Proposition 3.6]{BaderTaller}}]\label{lem:separatewithadmissible}
    Let $C$ and $C'$ be two disjoint closed convex subsets in M. Then, there exist an admissible half-space $\hs\in \Delta(C,C')$.
\end{lemma}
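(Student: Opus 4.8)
The plan is to first reduce to the case of two points and then to produce the half-space by a maximality argument. Since $M$ is compact, $C$ and $C'$, being closed and convex, are gate-convex. Applying Lemma~\ref{lem:gateconvexseparbypoint} to the gate-convex set $C$ and to $C'$ gives a point $a\in C$ with $\Delta(C,C')=\Delta(a,C')$; applying it again to the gate-convex set $C'$ and the singleton $\{a\}$ gives a point $a'\in C'$ with $\Delta(C',\{a\})=\Delta(a',\{a\})$, and hence $\Delta(\{a\},C')=\Delta(\{a\},a')$. Combining, $\Delta(C,C')=\Delta(a,a')$, with $a\neq a'$ since $a\in C$ and $a'\in C'$ lie in disjoint sets. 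So it suffices to find an admissible half-space in $\Delta(a,a')$. (We may assume $C$ and $C'$ are non-empty, the remaining cases being immediate.)

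Since $M$ is Hausdorff and locally open-convex, I would choose disjoint open convex neighbourhoods $U\ni a$ and $U'\ni a'$. By the separation theorem for median algebras, \cite[Theorem 2.8]{rol98}, the disjoint convex sets $U,U'$ are separated by a half-space; in particular the family $\mathcal{K}$ of convex subsets of $M$ that contain $U'$ and are disjoint from $U$ is non-empty. Ordered by inclusion, $\mathcal{K}$ satisfies the hypotheses of Zorn's Lemma, since the union of a chain of convex sets is convex and the two defining conditions pass to unions; let $\hat{K}$ be a maximal element. Because any convex set properly containing $\hat{K}$ automatically contains $U'$, the set $\hat{K}$ is in fact maximal among \emph{all} convex sets disjoint from $U$, and by the argument proving \cite[Theorem 2.8]{rol98} such a maximal convex set is a half-space. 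Thus $\hat{K}$ is a half-space with $U'\subseteq\hat{K}$ and $\hat{K}\cap U=\varnothing$.

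The crucial observation is that $\hat{K}$ is automatically \emph{closed}: if a point $x\in\overline{\hat{K}}$ belonged to the open set $U$, then the neighbourhood $U$ of $x$ would meet $\hat{K}$, contradicting $\hat{K}\cap U=\varnothing$; hence $\overline{\hat{K}}$ (which is convex, since the continuity of the median makes closures of convex sets convex) still contains $U'$ and is disjoint from $U$, so $\overline{\hat{K}}=\hat{K}$ by maximality. Now put $\hs:=M\setminus\hat{K}$. Then $\hs$ is a half-space (the complement of a half-space), it is open (the complement of the closed set $\hat{K}$), it contains $U$ and hence $a$, while $\hs^*=\hat{K}$ contains $U'$, so $\hs^*$ has non-empty interior and $a'\in\hs^*$, i.e.\ $a'\notin\hs$. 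Hence $\hs$ is admissible and $\hs\in\Delta(a,a')=\Delta(C,C')$, as required.

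The step I expect to be the main obstacle is exactly this last one: upgrading ``some separating half-space exists'' to ``an \emph{open} separating half-space with complement of non-empty interior exists.'' The naive idea of taking an arbitrary $\hs_0\in\Delta(C,C')$ and replacing it by its interior fails, because the interior of a half-space need not be convex -- already in a tripod, a branch point lying on the boundary of a half-space disconnects its interior. The maximality argument circumvents this: it is precisely the openness of the excluded set $U$ that forces a maximal convex set avoiding it to be closed, so that its complement is an open, indeed admissible, half-space.
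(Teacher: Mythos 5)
The paper cites this lemma from \cite{BaderTaller} without reproducing a proof, so there is no in-paper argument to compare against; I will assess correctness.

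Your argument is correct. The reduction via Lemma~\ref{lem:gateconvexseparbypoint}, applied twice together with the remark following it, does give $\Delta(C,C')=\Delta(a,C')=\Delta(a,a')$, and the Zorn argument is exactly the right mechanism for upgrading abstract separation to admissible (topological) separation: the openness of $U$ forces the maximal separating convex set $\hat K$ to be closed, since $\overline{\hat K}$ is still convex, still contains $U'$, and still misses $U$, so maximality gives $\overline{\hat K}=\hat K$. Consequently $\hs=M\setminus\hat K$ is open while $\hs^*=\hat K\supset U'$ has non-empty interior, and $a\in U\subset\hs$, $a'\in U'\subset\hs^*$, so $\hs\in\Delta(a,a')=\Delta(C,C')$. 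Your closing remark also correctly identifies why the naive ``take interiors of a separating half-space'' shortcut fails.

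Two steps are invoked slightly loosely and are worth tightening. First, that $\hat K$ is a half-space need not be justified by re-running the proof of \cite[Theorem 2.8]{rol98}; it follows from its statement: applying it to the disjoint convex pair $(\hat K,U)$ gives a half-space $\hs_0$ with $\hat K\subset\hs_0$ and $\hs_0\cap U=\varnothing$, and maximality of $\hat K$ among convex sets disjoint from $U$ forces $\hat K=\hs_0$. Second, the convexity of $\overline{\hat K}$ uses continuity of $m$ and deserves a line: for $x,y\in\overline{\hat K}$ and $z\in[x,y]$, pick $x_n,y_n\in\hat K$ converging to $x,y$; then $m(x_n,y_n,z)\in\hat K$ and $m(x_n,y_n,z)\to m(x,y,z)=z$, so $z\in\overline{\hat K}$.
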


\begin{corollary}\label{cor:separatingcollection}
    There exists a countable collection $\Hc$ of closed half-spaces in M, such that for every $x\neq y \in M$, there exist two disjoint, closed half-spaces $\hs\in \Delta(x,y)\cap \Hc$ and $\hs'\in \Delta(y,x)\cap\Hc$, and such that $x\in \hs^{\circ}$ and $y\in \hs'^{\circ}$. 
\end{corollary}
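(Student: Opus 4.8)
The plan is to prove a pointwise version first --- for a single fixed pair $x\neq y$ produce two closed half-spaces with the required properties --- and then assemble a single countable family $\Hc$ serving all pairs at once, using that $M$, being second countable and locally open-convex, admits a countable base of open convex sets (any base of a second countable space contains a countable base).

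\emph{Pointwise construction.} Fix $x\neq y$. As $M$ is compact Hausdorff it is normal, and as it is locally open-convex I can choose an open convex $U\ni x$ with $y\notin\overline U$ and then an open convex $V\ni y$ with $\overline V\subseteq M\setminus\overline U$; since the closure of a convex set is convex (by continuity of $m$), $\overline U$ and $\overline V$ are disjoint closed convex sets. Lemma~\ref{lem:separatewithadmissible} applied to the pair $(\overline V,\overline U)$ yields an \emph{admissible}, hence open, half-space $\mathfrak g$ with $\overline V\subseteq\mathfrak g$ and $\mathfrak g\cap\overline U=\varnothing$. Put $\hs:=M\setminus\mathfrak g$, a closed half-space. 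Then $x\in\overline U\subseteq\hs$ and $y\in\mathfrak g$, so $\hs\in\Delta(x,y)$; and from $\mathfrak g\subseteq M\setminus\overline U$ one gets $\overline{\mathfrak g}\subseteq M\setminus(\overline U)^{\circ}$, which misses $x$ because $U\subseteq(\overline U)^{\circ}$, hence $x\in M\setminus\overline{\mathfrak g}=\hs^{\circ}$. The mechanism to remember is exactly this: an open half-space missing an entire convex neighbourhood of $x$ has closure still missing $x$, so its closed complement contains $x$ in its \emph{interior}.

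\emph{Disjointness.} For the second half-space I repeat the same step with the closed convex set $\hs$ now playing the role of $\overline U$: since $y\notin\hs$ and $\hs$ is closed, choose an open convex $V'\ni y$ with $\overline{V'}\subseteq M\setminus\hs$, apply Lemma~\ref{lem:separatewithadmissible} to the disjoint closed convex pair $(\hs,\overline{V'})$ to obtain an open half-space $\mathfrak g'$ with $\hs\subseteq\mathfrak g'$ and $\mathfrak g'\cap\overline{V'}=\varnothing$, and set $\hs':=M\setminus\mathfrak g'$. Then $y\in\overline{V'}\subseteq\hs'$, while $\hs\subseteq\mathfrak g'$ forces $\hs'\subseteq M\setminus\hs$, so $\hs\cap\hs'=\varnothing$ and in particular $x\notin\hs'$, giving $\hs'\in\Delta(y,x)$; and $y\in(\hs')^{\circ}$ follows from $\overline{\mathfrak g'}\subseteq M\setminus(\overline{V'})^{\circ}$ as before.

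\emph{Countable family and the main obstacle.} Fix a countable base $\{U_n\}$ of open convex sets. First round: for every ordered pair $(m,n)$ with $\overline{U_m}\cap\overline{U_n}=\varnothing$, choose one admissible half-space separating $\overline{U_n}$ from $\overline{U_m}$ and record its closed complement; call this countable collection $\Hc_1$. Second round: for every $\hs\in\Hc_1$ and every $n$ with $\overline{U_n}\cap\hs=\varnothing$, choose one admissible half-space separating $\hs$ from $\overline{U_n}$ and record its closed complement; call this $\Hc_2$, and set $\Hc:=\Hc_1\cup\Hc_2$, still countable. Given $x\neq y$, normality lets me pick a basic $U_m\ni x$ with $y\notin\overline{U_m}$, then a basic $U_n\ni y$ with $\overline{U_n}\subseteq M\setminus\overline{U_m}$; the member $\hs\in\Hc_1$ attached to $(m,n)$ is precisely the half-space of the pointwise construction, so $\hs\in\Delta(x,y)\cap\Hc$ with $x\in\hs^{\circ}$. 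Then pick a basic $U_{n'}\ni y$ with $\overline{U_{n'}}\subseteq M\setminus\hs$; the member $\hs'\in\Hc_2$ attached to $(\hs,n')$ is the half-space of the disjointness step, so $\hs'\in\Delta(y,x)\cap\Hc$ with $y\in(\hs')^{\circ}$ and $\hs\cap\hs'=\varnothing$. The only genuine difficulty, and what dictates both the two-round structure and the separate-a-convex-neighbourhood-then-take-the-closed-complement device, is meeting three demands at once: the half-spaces must be closed, must contain the chosen point in their interior rather than merely in themselves, and must be genuinely disjoint rather than only separating $x$ from $y$; everything else is bookkeeping with normality, local open-convexity, and second countability.
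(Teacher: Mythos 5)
Your proof is correct and follows essentially the same route as the paper's: fix a countable base of open convex sets, use Lemma~\ref{lem:separatewithadmissible} in two rounds (first separate two basic neighbourhoods with disjoint closures, then separate the second neighbourhood from the half-space just produced), and pass to closed complements so that the basic open neighbourhoods sit in the interiors. The only cosmetic difference is in bookkeeping: the paper indexes the second round by the same pair $(U,V)$, taking $\hs_{(V,U)}\in\Delta(\overline V,\hs_{(U,V)})$ directly, while you reintroduce a fresh basic neighbourhood $U_{n'}$ of $y$ disjoint from $\hs$ and index by $(\hs,n')$ — slightly more redundant but equally countable and valid. You also spell out explicitly why $x$ lies in the interior of $\hs$ (via $\overline{\mathfrak g}\subseteq M\setminus(\overline U)^{\circ}$), a point the paper leaves to the reader.
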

\begin{proof}
 As $M$ is second countable and locally open-convex, we can find a countable basis for the topology, $\mathcal{C}$, consisting of open convex sets.
Thus, every two different points can be separated by two elements of $\mathcal{C}$ with disjoint closures. 

Let $U$ and $V$ be two elements of $\mathcal{C}$ with disjoint closures.
By lemma \ref{lem:separatewithadmissible} we can choose a closed half-space $\hs_{(U,V)}\in \Delta (\overline{U},\overline{V})$. 
Using lemma \ref{lem:separatewithadmissible} again, we find closed half-space $\hs_{(V,U)}\in \Delta (\overline{V},\hs_{(U,V)})$.

Denote by $\Hc$ the countable collection of all half-spaces of the form $\hs_{(U,V)}$ and $\hs_{(V,U)}$.
The reader can now verify that $\Hc$ possesses the required properties.   
\end{proof}

Given a collection of half-spaces $\Hc$, and a non-empty subset $A$ of $M$, we denote by $\Hc(A)$ the subcollection of $\Hc$ consisting of all the half-spaces $\hs\in \Hc$ that cut $A$ into two, i.e. such that both $\hs\cap A$ and $\hs^*\cap A$ are non empty.

Fix some collection of half spaces $\Hc$, and its corresponding collection of walls $\Wc_{\Hc}$. That is,
$$\mathcal{W}_{\Hc}:=\big\{\{\hs,\hs^*\}\ | \ \hs\in \Hc\big\}.$$

We have a natural surjection $\Hc\rightarrow \Wc_{\Hc}$, given by $\hs\mapsto \{\hs,\hs^*\}$. 
Fix a section $\sigma:\Wc_{\Hc}\rightarrow \Hc $. 
Given a set $A$ we denote by $\chi_A$ its characteristic function.
Consider the following map
$$\iota_{\Wc_{\Hc}}:M\rightarrow 2^{\Wc_{\Hc}}, \ x\mapsto\big(\chi_{\sigma(\mathbf{w})}(x)\big)_{\mathbf{w}\in \Wc_{\Hc}}.$$

We say that $\Wc_{\Hc}$ is \textit{separating} if $\iota_{\Wc_{\Hc}}$ is injective, that is, if the walls in $\Wc_{\Hc}$ are separate points in M.
It is called \textit{transverse} if $\iota_{\Wc_{\Hc}}$ is a surjection. 
The collection $\Hc$ is said to have one of the above properties if $\Wc_{\Hc}$ possess it.
We note that these properties are independent of the choice of the section $\sigma$.

We say that two collections of walls $\Wc_1$ and $\Wc_2$ are transverse, if for every $\mathfrak{w}_1\in\Wc_1$ and $\mathfrak{w}_2\in\Wc_2$, the set of walls $\{\mathfrak{w}_1,\mathfrak{w}_2\}$ is transverse.
Two collections of half-spaces $\Hc_1$ and $\Hc_2$ are said to be transverse, if their collections of walls, $\Wc_{\Hc_1}$ and $\Wc_{\Hc_2}$, are transverse.

We finish with a handy observation:
\begin{lemma}\label{lem:bijectioniscontinuous}
    Let $\phi:M\rightarrow N$ be a continuous median bijection between two sclocc median algebras. Then $\phi$ is an isomorphism.
\end{lemma}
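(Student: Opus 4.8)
The plan is to verify the two things required for $\phi$ to be an isomorphism of topological median algebras: that the set-theoretic inverse $\phi^{-1}\colon N\to M$ is again a median morphism, and that it is continuous.

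For the algebraic part, I would argue directly from bijectivity. Given $a,b,c\in N$, write $a=\phi(x)$, $b=\phi(y)$, $c=\phi(z)$ with $x,y,z\in M$ uniquely determined. Since $\phi$ is a median morphism, $\phi(m(x,y,z))=m(\phi(x),\phi(y),\phi(z))=m(a,b,c)$, and applying $\phi^{-1}$ gives $\phi^{-1}(m(a,b,c))=m(x,y,z)=m(\phi^{-1}(a),\phi^{-1}(b),\phi^{-1}(c))$. Hence $\phi^{-1}$ commutes with the median operators, i.e.\ it is a median morphism.

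For continuity of $\phi^{-1}$, I would invoke the standard point-set topology fact that a continuous bijection from a compact space to a Hausdorff space is a homeomorphism. Both $M$ and $N$ are compact, being sclocc, and Hausdorff, by the very definition of a topological median algebra; hence any closed subset $F\subset M$ is compact, its image $\phi(F)$ is compact in the Hausdorff space $N$ and therefore closed. Thus $\phi$ is a closed map, equivalently $\phi^{-1}$ is continuous. Combining the two points, $\phi$ and $\phi^{-1}$ are both continuous median morphisms, so $\phi$ is an isomorphism.

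I do not expect any genuine obstacle here: the statement is essentially a packaging of the compact-to-Hausdorff homeomorphism principle together with the trivial observation that the inverse of a bijective algebra homomorphism is a homomorphism. The only input beyond formal manipulation is the compactness of the source and the Hausdorffness of the target, both of which are hypotheses in force.
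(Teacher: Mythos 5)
Your proof is correct and follows essentially the same two steps as the paper: the standard compact-to-Hausdorff homeomorphism principle for continuity of $\phi^{-1}$, and the routine verification that the inverse of a bijective median morphism is again a median morphism. The only difference is cosmetic — you spell out the closed-map argument and present the algebraic computation in slightly different notation.
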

\begin{proof}
    Any continuous bijection between compact Hausdorff spaces is a homeomorphism. So it is left to show that the inverse $\phi^{-1}$ is a median map.

Indeed, let $x,y,z\in M$. Then,

\begin{align*}
    \phi^{-1}(m(x,y,z))&=\phi^{-1}(m(\phi(\phi^{-1}(x)),\phi(\phi^{-1}(y)),\phi(\phi^{-1}(z))))\\ &=\phi^{-1}\circ \phi \big(m(\phi^{-1}(x),\phi^{-1}(y),\phi^{-1}(z))\big)=m(\phi^{-1}(x),\phi^{-1}(y),\phi^{-1}(z)).
\end{align*}

\end{proof}

\section{Cubing Decomposition And Proof Of Proposition \ref{prop:structure}} \label{sec:cubing}
Given a median algebra M, denote by $\Hc_M$ and $\Wc_M$ the collection of all half-spaces and walls of M, respectively.
We fix a sclocc median algebra M and a group $G$ as in Proposition \ref{prop:structure}, and a section $\sigma_M:\Wc_M\rightarrow \Hc_M$.

\begin{lemma}\label{lem:productthm}
    Let $\Wc\subset \Wc_M$ be a $G$-invariant subcollection of walls that separates points in M.
    Then, there exist two sclocc median algebras $M'$ and $C$, where C is a finite cube and an isomorphism $\phi:M\rightarrow M'\times C$, if and only if there exists a partition $\Wc=\Wc_1\amalg \Wc_2$ such that the following conditions hold:
    \begin{enumerate}
        \item The set $\Wc_1$ is transverse, in which all the walls consists of clopen half-spaces,
        \item The collections $\Wc_1$ and $\Wc_2$ are transverse.
    \end{enumerate}

    Furthermore, we can choose $M'$ and $C$ to be $G$-spaces, and $\phi$ to be $G$-equivariant, if and only if there exists a partition as above, with $\Wc_1$ and $\Wc_2$ $G$-invariant.
\end{lemma}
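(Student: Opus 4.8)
The plan is to transport structure across the stated equivalences. First I would record a description of the walls of a product of median algebras: if $\hs\subseteq A\times B$ is a half-space then for each $c\in B$ the fibre $\hs_c:=\{a\in A:(a,c)\in\hs\}$, being the intersection of $\hs$ with the convex slice $A\times\{c\}$, is convex with convex complement, so it is a half-space of $A$ or equals $\varnothing$ or $A$. Using the identity $m\big((a,c),(b,c'),(a,c')\big)=(a,c')$ (together with a similar computation) one checks that wherever $\hs_c$ is a proper half-space it is one fixed half-space $\hs'$ of $A$, that then $\hs_c=\hs'$ for every $c$, so $\hs=\hs'\times B$; and that otherwise $\hs=A\times\hs''$ with $\hs''=\{c:\hs_c=A\}$. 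Hence every wall of $A\times B$ is ``from $A$'' or ``from $B$''; in particular $\{0,1\}^n$ has precisely its $n$ coordinate walls, all clopen and pairwise transverse, and each coordinate wall of $C=\{0,1\}^n$ is transverse to every wall ``from $A$'' in a product $A\times C$.

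For the first implication, given an isomorphism $\phi:M\to M'\times C$ with $C$ a finite cube, let $\Wc_1\subseteq\Wc$ consist of the walls that are ``from $C$'' via $\phi$ and $\Wc_2:=\Wc\setminus\Wc_1$. By the previous paragraph $\Wc_1$ maps injectively into the finite set of coordinate walls of $C$, so it is finite, its half-spaces are clopen, it is transverse (a subcollection of a transverse collection is transverse) and it is transverse to $\Wc_2$ (each ``from $M'$'' wall is transverse to each coordinate wall). This is the desired partition. For the $G$-equivariant refinement, under the diagonal action one has $g\cdot(M'\times\mathfrak k)=M'\times(g\mathfrak k)$ and $g\cdot(\mathfrak j\times C)=(g\mathfrak j)\times C$, so $G$ preserves the dichotomy and $\Wc_1,\Wc_2$ are $G$-invariant.

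Conversely, suppose $\Wc=\Wc_1\amalg\Wc_2$ satisfies (1) and (2). Since the half-spaces in $\Wc_1$ are clopen, $\iota_{\Wc_1}:M\to\{0,1\}^{\Wc_1}$ is a continuous median morphism, surjective as $\Wc_1$ is transverse. Choosing $p,q$ with $\iota_{\Wc_1}(p)=\mathbf 0$, $\iota_{\Wc_1}(q)=\mathbf 1$, every $r\in M$ gives $m(p,q,r)\in[p,q]$ with $\iota_{\Wc_1}(m(p,q,r))=\iota_{\Wc_1}(r)$, so $\iota_{\Wc_1}$ maps $[p,q]$ onto $\{0,1\}^{\Wc_1}$; since intervals are countable, $\Wc_1$ is finite, and we put $C:=\{0,1\}^{\Wc_1}$. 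Let $M':=\iota_{\Wc_1}^{-1}(\mathbf 0)$, a closed convex hence gate-convex set with gate-projection $\pi_{M'}$, and set $\phi(x):=(\pi_{M'}(x),\iota_{\Wc_1}(x))$. Here (2) is used as follows: if $\mathfrak k$ is a half-space of a wall of $\Wc_1$ and $\hs$ a half-space of a wall $\mathfrak w\in\Wc_2$, then $\mathfrak k\cap\hs\neq\varnothing$ by transversality, so for $y\in\hs$ and $z\in\mathfrak k\cap\hs$ we get $\pi_{\mathfrak k}(y)\in[y,z]\subseteq\hs$; hence $\pi_{\mathfrak k}$, and therefore its composite $\pi_{M'}$ (Lemma~\ref{lem:fio2}), preserves the side of every wall of $\Wc_2$. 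Since $\Wc$ separates points, this yields at once that $\phi$ is injective; for surjectivity, given $(x',\epsilon)\in M'\times C$ the point $x:=\pi_{\iota_{\Wc_1}^{-1}(\epsilon)}(x')$ lies in the gate-convex fibre $\iota_{\Wc_1}^{-1}(\epsilon)$, so $\iota_{\Wc_1}(x)=\epsilon$, and $x$ agrees with $x'$ on every wall of $\Wc_2$, forcing $\pi_{M'}(x)=x'$. One checks $M'$ is sclocc (a closed convex subset of $M$), whence $M'\times C$ is sclocc, and $\phi$, a continuous median bijection, is an isomorphism by Lemma~\ref{lem:bijectioniscontinuous}.

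Finally, for the $G$-equivariant converse — which I expect to be the main obstacle — the fibre $\iota_{\Wc_1}^{-1}(\mathbf 0)$ need not be $G$-invariant, so I would replace $C$ by the coordinate-free cube $\widehat C:=\prod_{\mathfrak w\in\Wc_1}\mathfrak w$, one factor per wall of $\Wc_1$, each factor being its two-element set of half-spaces. Then $G$ permutes these walls and their half-spaces, acting on $\widehat C$ by cube automorphisms, and $\widehat\iota:M\to\widehat C$ sending $x$ to the tuple of half-spaces of the walls of $\Wc_1$ that contain $x$ is a $G$-equivariant continuous surjective median morphism. Fix a base-point $c_0\in\widehat C$, put $M':=\widehat\iota^{-1}(c_0)$ with gate-projection $\pi_{M'}$, and equip $M'$ with the transported action $g\ast a:=\pi_{M'}(ga)$. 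That this is a $G$-action by continuous median automorphisms, and that $\phi(x):=(\pi_{M'}(x),\widehat\iota(x))$ — already an isomorphism of median algebras by the previous paragraph — is $G$-equivariant, both reduce to the single identity $\pi_{M'}(gx)=\pi_{M'}\big(g\,\pi_{M'}(x)\big)$; and this holds because $x$ and $\pi_{M'}(x)$ lie on the same side of every wall of $\Wc_2$ (the point used above), $G$ preserves this relation since $\Wc_2$ is $G$-invariant, the two sides lie in $M'$ hence agree across $\Wc_1$, and $\Wc$ separates points. Pinning down this identity and the fact that $\ast$ is by median automorphisms is the crux; the rest is routine bookkeeping with gate-projections and Helly's theorem.
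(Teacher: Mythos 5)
Your proof is correct and follows the same route as the paper's: project $M$ onto a cube $C$ via $\iota_{\Wc_1}$, project onto a single fibre to obtain $M'$, and show that $\phi=(\pi_{M'},\iota_{\Wc_1})$ is a continuous median bijection, hence an isomorphism by Lemma~\ref{lem:bijectioniscontinuous}; the converse direction identifying the partition from a product decomposition is also the same. Two points of comparison are worth recording. First, your injectivity and surjectivity arguments rest on the single clean observation that, by transversality of $\Wc_1$ and $\Wc_2$, each gate-projection $\pi_{\mathfrak k}$ onto a $\Wc_1$-half-space $\mathfrak k$ preserves the side of every $\Wc_2$-wall, and hence so does $\pi_{M'}$; combined with the separating property of $\Wc$ this finishes both claims at once. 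The paper instead proves that $\pi_{M_x}|_{M_y}$ is an isomorphism via Corollary~\ref{cor:separatingcollection}, Lemma~\ref{lem:gateconvexseparbypoint}, Helly's theorem and compactness — your route is leaner and buys the same conclusion. Second, you are right that the $G$-equivariant refinement is not literally immediate if one fixes a fibre $M_0$ (which is not $G$-invariant) and a section $\sigma$ for $\iota_{\Wc_1}$; your coordinate-free cube $\widehat C=\prod_{\mathfrak w\in\Wc_1}\mathfrak w$ and the transported action $g\ast a=\pi_{M'}(ga)$ handle this cleanly, and the crux identity $\pi_{M'}(gx)=\pi_{M'}(g\pi_{M'}(x))$ follows from the wall-side-preservation observation together with $G$-invariance of $\Wc_2$. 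The paper's terse ``the last assertion is immediate'' is in fact justified by its earlier Lemma~\ref{lem:InducedAction}, which is exactly your transported action in disguise; you have simply unpacked that reduction. In short, same decomposition, same key objects, but a streamlined injectivity argument and a fuller (and welcome) treatment of equivariance.
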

\begin{proof}
    Suppose first that we have a partition as above. 
    Then, item (1) implies that the map $\iota:=\iota_{\Wc_1}$ is in fact a continuous projection onto the cube $C:=2^{\Wc_1}$.
    Fix $x\in C$, and denote by $M_x$ the fiber $\iota^{-1}(x)$.
    As $\iota$ is a continuous median map, $M_x$ is closed and convex, thus, a gate-convex set.
    We claim that for every $x,y\in C$, $\pi_{M_x}|_{M_y}$ is an isomorphism.
    
    Let $a,b\in M_y$. 
    By lemma \ref{cor:separatingcollection}, we can find two disjoint closed half-spaces $a\in \hs_a^0\subset\hs_a\in\Delta(a,b)$ and $b\in\hs_b^0\subset \hs_b\in\Delta(b,a)$.
    For a set $A\subset M$, denote by $\overline{A}$ the topological closure of $A$.
    Applying lemma \ref{lem:gateconvexseparbypoint} to the pairs $\{a,\overline{\hs_a^*}\cap M_y\}$ and $\{b,\overline{\hs_b^*}\cap M_y\}$, implies the existence of two points $a'\in \overline{\hs_a^*}\cap M_y$ and $b'\in\overline{\hs_b^*}\cap M_y$, such that $\Delta(a,a')=\Delta(a,\overline{\hs_a^*})$ and $\Delta(b,b')=\Delta(b,\overline{\hs_b^*})$.
    These equalities are true in M, although we applied the lemma in $M_y$, thanks to lemma 2.3 in \cite{fioBoundary}.

    Denote by $\Hc$ the union $\sigma_M(\Wc)\cup \sigma_M(\Wc)^*$.
    As $\Wc$ is separating, we can find two half-spaces $\fs_a\in \Delta(a,a')\cap \Hc$ and $\fs_b\in\Delta(b,b')\cap \Hc$.
    Now, it follows by Helly's theorem, compactness, and the assumption that $\Wc_1$ and $\Wc_2$ are transverse, that $\overline{\fs_a}\cap M_x\neq\varnothing$ and that $\overline{\fs_b}\cap M_x\neq\varnothing$.
    Note that $\overline{\fs_a}$ and $\overline{\fs_b}$ are disjoint, and therefore, it follows by lemma \ref{lem:fio2}(1) that $\pi_{M_x}(a)\neq \pi_{M_x}(b)$. 
    Thus, $\pi_{M_x}|_{M_y}$ is injective.
    By lemma \ref{lem:fio2}(2), $\pi_{M_x}\circ\pi_{M_y}|_{\pi_{M_x}(M_y)}=id_{\pi_{M_x}(M_y)}$. 
    In particular, $\pi_{M_y}(\pi_{M_x}(M_y))=\pi_{M_x}(M_y)$, and thus, by injectivity, $\pi_{M_y}(\pi_{M_x}(M_y))=M_y$, that is, $\pi_{M_y}|_{M_x}$ is surjective.
    The claim now follows by the symmetry of the arguments.

    Fix $x_0\in C$, denote $M_0:=M_{x_0}$ and by $\pi_0$ its gate-projection.
    As $M$ is the disjoint union of the $M_x$, it is clear that the continuous map $\iota\times \pi_0:M\rightarrow C\times M_0$ is a bijection.
    It follows by lemma \ref{lem:bijectioniscontinuous} that this is an isomorphism, as needed.
    Note that $C$ must be finite, as the intervals in M are countable.

    Suppose now that $M=M'\times C$, for $M'$ and $C$ as in the lemma.  
    Let $p_{M'}$ and $p_C$ be the natural projection to each factor.
    We have the following two subcollections of $\Wc_M$:
    \begin{align*}
        & \Wc_M^1:=\big\{\{p_C^{-1}(\ws)\ | \ \ws\in\Wc_C\big\} \\
        & \Wc_M^2:=\big\{\{p_{M'}^{-1}(\ws)\ | \ \ws\in\Wc_{M'}\big\} 
    \end{align*}
    Clearly, these collections are transverse, and since $\Wc_C$ is transverse and its walls are consists of clopen half-spaces, so is $\Wc_M^1$ and so are its walls. 
    Take $\hs\in\Hc_M$. 
    Observe that if $(x_1,y_1),(x_2,y_2)\in \hs$, then, $[x_1,x_2]\times[y_1,y_2]\subset \hs$. 
    Hence, $\hs$ is of the form $C_1\times C_2$ for some two convex sets $C_1\subset M'$ and $C_2\subset C$.
    Since this is true also for $\hs^*$, we must have $\mathfrak{h}\in \Wc_M^1$ or $\mathfrak{h}\in \Wc_M^2$. That is, $\Wc_M^1$ and $\Wc_M^2$ constitute a partition of $\Wc_M$.
    Define $\Wc_1:=\Wc\cap\Wc^1_M$ and $\Wc_2:=\Wc\cap\Wc^2_M$.
    It is clear that this is the required partition.
    We note that it is possible that $\Wc_1$ is in fact empty, which just implies that $C$ is a point.

    The last assertion in the lemma is immediate.
\end{proof}

\begin{proof}[Proof of proposition \ref{prop:structure}]
    If M is already a cube, there is nothing to prove.
    Otherwise, define 
    \begin{align*}
        & \Wc_1:=\big\{\ws\ | \ \ws\in\Wc_M \text{ consist of clopen half-spaces and }\Wc_M\backslash \{\ws\} \ and \ \{\ws\} \text{ are transverse }\big\} \\
        & \Wc_2:=\Wc_M\backslash \Wc_1
    \end{align*}

    Any isomorphism of $M$ must preserve $\Wc_1$, and therefore, is also preserving $\Wc_2$.
    By construction, these sets meets the conditions in lemma \ref{lem:productthm}, and thus, correspond to a decomposition $M=M'\times C$, where $M'$ is a sclocc median algebra, $C$ is a finite cube, and on both there exist $G$-actions, such that the action on $M$ is the diagonal.

    Let $\varphi :M\rightarrow C'$ be a surjective morphism, with $C'$ a cube. 
    Denote by $\Wc$ the collection of walls of the form $\{\varphi^{-1}(\fs),\varphi^{-1}(\fs^*)\}$ for $\fs\in\Hc_{C'}$.
    It is clear that $\Wc\subset \Wc_1$, and that $\varphi$ is the composition of $p_C=\iota_{\Wc_1}$ with the projection from $C$ to the factor generated by $\Wc$.
\end{proof}

\section{Special Constructions And Technical Lemmata} \label{sec:constructions}
The purpose of this section is to establish all the constructions and technical results that are necessary for the proof of the main theorems, e.g. checking the measurability of some sets and maps between them, etc.
The trusting reader can skip this section for the benefit of the reading flow.

\subsection{Subsets of $\mathcal{C}$M}
Let M be a \textit{sclocc} median algebra. 
As it is a metrizable space, we may consider its space $\Cc M$ of closed subsets, endowed with the Hausdorff distance that corresponds to a metric on M. 
We fix such a metric and recall that $\Cc M$ is a compact metric space (see for example lemma 5.31 in \cite{Bridson1999}).

Let $F\subset M$ be a finite subset.
It follows by lemma \ref{lem:convexisjoin}, and by an inductive argument, that the convex hull of F is the join of two compact convex sets, and therefore, it is itself a compact and convex set.
In particular, as discussed in the previous section, $conv(F)$ is a gate-convex set, and is an element of $\mathcal{C}M$.

\begin{definition}\label{def:joinspace}
Denote by $\Jc_n$ the subspace of $\mathcal{C}M$ consisting of all the convex sets that generated by n elements. In addition, denote by $\Jc_n^{\subseteq}$ the subspace of $\Jc_n\times\Jc_n$ consisting of pairs (C,J) such that $J\subset C$.
\end{definition}

\begin{lemma}\label{lem:joinclosed}
$\Jc_n$ and $\Jc_n^{\subseteq}$ are closed subspaces of $\mathcal{C}M$ and $\mathcal{C}M\times \mathcal{C}M$, respectively.
\end{lemma}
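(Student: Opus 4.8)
The plan is to realize $\Jc_n$ as a continuous image of the compact space $M^n$ under the ``convex hull of an $n$-tuple'' map, and to obtain $\Jc_n^{\subseteq}$ by intersecting $\Jc_n\times\Jc_n$ with the containment relation on $\Cc M\times \Cc M$, which is closed for soft reasons.

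First I would record two routine facts about the Hausdorff metric on a compact metric space: (i) if $f\colon X\to Y$ is a continuous map of compact metric spaces, then $S\mapsto f(S)$ is continuous $\Cc X\to\Cc Y$ (uniform continuity of $f$ yields a modulus for the induced map); and (ii) with the sup-metric on products, $(A,B)\mapsto A\times B\times M$ is $1$-Lipschitz from $\Cc M\times\Cc M$ to $\Cc(M^3)$. Using that $m(a,b,z)\in[a,b]$ always while $z\in[a,b]$ forces $z=m(a,b,z)$, we have $[a,b]=\{m(a,b,z):z\in M\}$, hence $[A,B]=m(A\times B\times M)$; applying (i) to the median operator $m\colon M^3\to M$ and composing with (ii) then shows that the join map $(A,B)\mapsto[A,B]$ is continuous on $\Cc M\times\Cc M$.

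Next I would define, by induction on $k$, maps $\Psi_k\colon M^k\to\Cc M$ by $\Psi_1(x)=\{x\}$ and $\Psi_k(x_1,\dots,x_k)=[\Psi_{k-1}(x_1,\dots,x_{k-1}),\{x_k\}]$; each $\Psi_k$ is continuous, being a composite of the isometric embedding $x\mapsto\{x\}$, coordinate projections, and the join map. By Lemma~\ref{lem:convexisjoin} and induction, $\Psi_k(x_1,\dots,x_k)=\Conv(\{x_1,\dots,x_k\})$, the point being that $\Conv(x_1,\dots,x_{k-1})$ is convex, so its join with the convex singleton $\{x_k\}$ is exactly the convex hull of the union. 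Allowing repeated coordinates, the image $\Psi_n(M^n)$ is precisely $\Jc_n$; as the continuous image of the compact space $M^n$ it is compact, hence closed in $\Cc M$.

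Finally, for $\Jc_n^{\subseteq}$, I would check that $R:=\{(A,B)\in\Cc M\times\Cc M:B\subseteq A\}$ is closed: if $B_j\subseteq A_j$ with $A_j\to A$ and $B_j\to B$, then any $b\in B$ is a limit of points $b_j\in B_j\subseteq A_j$, so $d(b,A)\le d(b,b_j)+d_H(A_j,A)\to 0$ and thus $b\in A$. Since $\Jc_n\times\Jc_n$ is closed as a product of closed sets and $\Jc_n^{\subseteq}=(\Jc_n\times\Jc_n)\cap R$, it is closed in $\Cc M\times\Cc M$. There is no genuine obstacle here; the only step requiring a little care is the continuity of the join map, which is why I isolate facts (i) and (ii) at the outset.
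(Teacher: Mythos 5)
Your proof is correct and gives the same conclusion, but by a genuinely different route from the paper's. The paper argues by induction on $n$ directly at the level of sequences: given $C_i\in\Jc_{n+1}$ converging to $C$, it splits off a distinguished generator $x_{n+1}^i$, uses the inductive hypothesis together with compactness of $\mathcal{C}M$ to pass to a subsequence in which $x_{n+1}^i$ and the sub-hulls $C_i'=\Conv(\{x_1^i,\dots,x_n^i\})$ converge, and then shows $C=[C',x_{n+1}]$ by a two-sided inclusion argument invoking continuity of $m$. You instead realize $\Jc_n$ as the image of the compact space $M^n$ under a continuous map $\Psi_n\colon M^n\to\mathcal{C}M$ sending an $n$-tuple to its convex hull, deducing continuity of $\Psi_n$ from the continuity of the join $(A,B)\mapsto[A,B]=m(A\times B\times M)$ on $\mathcal{C}M\times\mathcal{C}M$. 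Both approaches rest on Lemma~\ref{lem:convexisjoin} and continuity of the median, but yours packages the analytic content once, in the continuity of the join, rather than re-running a subsequence extraction at each inductive step; compactness of $\Jc_n$ then falls out structurally as the continuous image of a compactum. Your treatment of $\Jc_n^{\subseteq}$, intersecting $\Jc_n\times\Jc_n$ with the closed containment relation, is exactly the reduction the paper compresses into its opening remark that it suffices to handle $\Jc_n$, though you helpfully spell out why that relation is closed.
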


\begin{proof}
Notice that it is enough to show that $\Jc_n$ is a closed subspace. We argue by induction on n. The case $n=1$ is clear, as $\Jc_1$ is none other than M itself. 
Assume the claim is correct for n, and let $C_i\in\Jc_{n+1}$ be a sequence that converges to some element $C\in \mathcal{C}M$. 
For every i, denote by $x_j^i\in M$, j=1,..,n+1, the collection of generators of $C_i$, that is, $C_i=Conv(\{x_j^i\}_{j=1}^{n+1})$. 
For every i, denote by $C_i'$ the set $Conv(\{x_j^i\}_{j=1}^n)$, and notice that by the definition of the convex hall,  $C_i=Conv(C_i'\cup \{x_{n+1}^i\})$. 
By the induction hypothesis, after taking a subsequence, we may assume that there exist $x_{n+1}\in M$ and $C'\in\Jc_{n}$ such that the sequences $\{x_{n+1}^i\}_i$ and $\{C_i'\}_i$ converge to $x_{n+1}$ and $C'$, respectively. 
We claim that $C=Conv(C'\cup \{x_{n+1}\})\in \Jc_{n+1}$.

Lemma \ref{lem:convexisjoin} implies that $C_i=[C_i',x_{n+1}^i]$ and $Conv(C'\cup \{x_{n+1}\})=[C',x_{n+1}]$. 
Notice that, as for every i, $C_i'\subset C_i$, then $C'\subset C$. 
Therefore, we get one inclusion for free: $[C',x_{n+1}]\subset C$. 
For the other inclusion, fix an element $y\in C$. Let $y_i\in C_i$ be a sequence that converges to $y$. 
By the observation above, for every i we can find $z_i\in C_i'$ such that $y_i\in [z_i,x_{n+1}^i]$. After passing to a subsequence, if needed, we may assume that $z_i$ converges to an element $z\in C'$. 
But now, by continuity of the median
$$y=\underset{n}{\lim}\ y_i=\underset{n}{\lim}\ m(z_i,y_i,x_{n+1}^i)=m(z,y,x_{n+1})\in [C',x_{n+1}]$$ 
\end{proof}

Denote by $\Cub(M)$ the collection of all closed subcubes in M. 

\begin{lemma}\label{lem:cubclosed}
    The space $\Cub(M)$ is a closed subspace of $\mathcal{C}M$.
\end{lemma}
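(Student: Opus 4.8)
The plan is to show that the space of closed subcubes $\Cub(M)$ is closed in $\mathcal{C}M$ by exhibiting it as a countable intersection (or a suitable union over dimensions) of closed sets, leveraging Lemma~\ref{lem:joinclosed}. First I would observe that a closed cube of dimension $k$ is in particular a compact convex set generated by $2^k$ vertices, so $\Cub(M) \subseteq \bigcup_{k\geq 0} \Jc_{2^k}$; moreover, since intervals in $M$ are countable and cubes embed their coordinate axes as intervals, there is a uniform bound on $k$ (in fact $M$ being sclocc with countable intervals forces finite-dimensional cubes, and by compactness arguments one expects a global dimension bound), so it suffices to show that for each fixed $n$, the set $\Cub_n(M)$ of closed subcubes of dimension exactly $n$ (or at most $n$) is closed in $\mathcal{C}M$, and then take a finite union.

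The main work is the following: take a sequence $C_i \in \Cub(M)$ of cubes of a fixed dimension $k$ converging to some $C \in \mathcal{C}M$. Writing $C_i = \mathrm{Conv}(\{x^i_v\}_{v \in \{0,1\}^k})$ where the $x^i_v$ are the $2^k$ vertices indexed so that $x^i_v$ and $x^i_w$ are adjacent iff $v,w$ differ in one coordinate, I would pass to a subsequence so that each vertex sequence $x^i_v$ converges to a point $x_v \in M$. By the argument already used in the proof of Lemma~\ref{lem:joinclosed} (which shows $\Jc_n$ is closed and identifies the limit as the convex hull of the limit generators), one gets $C = \mathrm{Conv}(\{x_v\}_{v})$. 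It then remains to check that the limit configuration $\{x_v\}$ still spans a cube of the same dimension, i.e. that the median relations characterizing a cube pass to the limit. Concretely, a finite subset spans a $k$-cube precisely when the vertices satisfy the defining median identities of $\{0,1\}^k$ — e.g. $m(x_u, x_v, x_w) = x_{\mathrm{maj}(u,v,w)}$ for all $u,v,w$ where $\mathrm{maj}$ is coordinatewise majority — and these are closed conditions by continuity of $m$. Hence the limit of the vertex data satisfies them, so $C$ is (the convex hull of) a cube; one should also note the convex hull of a set satisfying the cube identities is exactly that cube, so $C \in \Cub(M)$.

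The one genuine subtlety — and the step I expect to be the main obstacle — is controlling the dimension in the limit: a priori a sequence of $k$-cubes could degenerate so that some pairs of adjacent vertices collide ($x_u = x_v$ for adjacent $u \neq v$), in which case the limit is a cube of strictly smaller dimension. This is fine for showing $\bigcup_{j \le k}\Cub_j(M)$ is closed (the limit is still some cube, just possibly lower-dimensional), so the clean statement to prove is that $\{C : C \text{ is a cube of dimension} \le n\}$ is closed for each $n$, and then $\Cub(M) = \bigcup_{n} \{C : \dim \le n\}$, which is a genuinely finite union once we invoke the global bound on cube dimensions coming from countability of intervals together with compactness (any infinite ascending chain of faces would produce an uncountable interval, or more carefully, sclocc plus countable intervals bounds the dimension — this is the point to double-check). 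With that bound in hand the union is finite, hence closed, completing the proof.

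I would therefore structure the writeup as: (i) reduce to fixed dimension via the dimension bound; (ii) for fixed $n$, take a convergent sequence, extract convergent vertex sequences, and apply the Lemma~\ref{lem:joinclosed} machinery to identify the Hausdorff limit with the convex hull of the limiting vertices; (iii) pass the cube-defining median identities through the limit by continuity of $m$ to conclude the limit is a cube of dimension $\le n$; (iv) conclude $\Cub(M)$ is a finite union of such closed sets. The routine parts (extracting subsequences, the explicit median identities for $\{0,1\}^n$) I would state compactly rather than belabor.
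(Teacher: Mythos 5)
Your approach has a genuine gap, and the lemma is in fact more general than your proposal realizes. First, a factual slip: a subcube $C\subset M$ is not a convex subset of $M$ (its convex hull is the interval $I_C$, which generally contains many points outside $C$), so $\Cub(M)\not\subseteq\bigcup_k \Jc_{2^k}$ and the reduction to Lemma~\ref{lem:joinclosed} does not literally apply. More seriously, your scheme hinges on a uniform bound on the dimension of cubes in $M$, but no such bound is available here: this lemma lives in the part of \S\ref{sec:constructions} that assumes only sclocc, and the paper explicitly flags (at the start of the ``Mapping Properties'' subsection) that it is \emph{not} assuming countable intervals or finite-dimensional cubes for these lemmata. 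Even if one does assume countable intervals, that only forces each individual cube to be finite; it does not bound the dimensions across all subcubes, so $\bigcup_n\{C:\dim\le n\}$ would be an infinite union of closed sets and closedness would not follow. You flagged this step yourself as ``the point to double-check,'' and indeed it fails.

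The paper avoids dimension entirely by using the intrinsic characterization of cubes from Lemma~\ref{lem:suparecubes}: a closed median subalgebra $A$ with $\Ends(A)=A$ (every point has an antipode) is a cube, of whatever dimension. Given $C_n\to C$, one checks (i) $C$ is a median subalgebra, by continuity of $m$ applied to convergent triples picked from the $C_n$, and (ii) every $x\in C$ has an antipode in $C$: take $x_n\in C_n$ with $x_n\to x$, pass to a subsequence so the antipodes $x_n^*$ converge to some $x^*\in C$, and verify $m(x,x^*,z)=z$ for all $z\in C$ by continuity. This is shorter, requires no vertex indexing or majority-vote identities, and works for cubes of arbitrary (even infinite) dimension — which is exactly what the general statement of the lemma needs. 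I would recommend replacing the fixed-dimension reduction with this antipode argument.
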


Before proving this lemma, let us introduce a new notion, and a related property. Let $A\subset M$. We denote by $\Ends(A)$ the subset of all elements $x\in A$, for which there is an \textit{antipodal element in A}. 
That is, an element $x^*\in A$, such that $m(x,x^*,z)=z$, for all $z\in A$. If $A\in \Jc_2$, this amounts to saying that $[x,x^*]=A$. 
We claim the following:

\begin{lemma}\label{lem:suparecubes}
    Let $A\in \mathcal{C}M$ be a median subalgebra. If $\Ends(A)\neq \varnothing$, then $\Ends(A)\in \Cub(M)$
\end{lemma}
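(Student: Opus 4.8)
The plan is to set $E:=\Ends(A)$ and show successively that $E$ is a closed median subalgebra of $M$, that it is again sclocc with countable intervals, that the "antipode" involution $\ast\colon E\to E$ is a homeomorphism under which every closed half-space of $E$ is clopen, and that no two distinct walls of $E$ are nested; from this the wall-embedding will identify $E$ with a finite cube. First I would note that antipodes are unique: if $y',y''$ are both antipodal to $y$ in $A$, then $y'=m(y,y'',y')=m(y,y',y'')=y''$ (applying $m(y,y'',z)=z$ at $z=y'$ and $m(y,y',z)=z$ at $z=y''$), so there is a well-defined involution $\ast$ on $E$ (and $y^*\in\Ends(A)$ by symmetry of the relation). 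Next, $E$ is closed: $A$ is compact, being closed in the compact $M$, so from $y_n\to y$ with $y_n\in E$ one extracts a subsequence with $y_n^*\to y^*\in A$, and then $m(y,y^*,z)=\lim_n m(y_n,y_n^*,z)=z$ for all $z\in A$, so $y\in E$; hence $E\in\mathcal{C}M$.

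The step I expect to be the main obstacle is showing that $E$ is a median subalgebra. Rather than wrestle directly with axiom~(3), I would argue via walls, using two elementary facts: that $m(a,b,c)$ lies in a half-space $\hs$ precisely when at least two of $a,b,c$ do (by convexity of $\hs$ and of $\hs^*$); and that, since walls separate points in any median algebra, for $p,q\in A$ one has $[p,q]=A$ if and only if every half-space of $A$ separates $p$ from $q$ (the nontrivial direction: if some half-space separated $z$ from $m(p,q,z)$ for some $z\in A$, the first fact would force it not to separate $p$ from $q$). Thus $y\in E$ exactly when $A$ contains a point sitting across every wall of $A$ from $y$. Given $y_1,y_2,y_3\in E$, I would then check that $q:=m(y_1^*,y_2^*,y_3^*)$ is the antipode in $A$ of $p:=m(y_1,y_2,y_3)$: for any half-space $\hs$ of $A$, each $y_i$ is separated from $y_i^*$, so $\#\{i:y_i^*\in\hs\}=3-\#\{i:y_i\in\hs\}$, whence exactly one of $p,q$ lies in $\hs$ for every $\hs$. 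By the second fact, $q$ is the antipode of $p$ in $A$, so $p\in E$; thus $E$ is a median subalgebra of $M$ and $\ast$ is a median endomorphism of it, and running the same characterisation inside $E$ shows that each $y\in E$ lies across every wall of $E$ from $y^*$.

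Next I would record that $E$ is sclocc with countable intervals — compact and second countable as a closed subspace of $M$, locally open-convex because intersecting $E$ with a basis of convex open neighbourhoods of a point of $M$ yields one in $E$ (using $[a,b]_E=[a,b]_M\cap E$), and with intervals contained in those of $M$ — and that $\ast$, having closed graph into the compact Hausdorff space $E$, is a homeomorphism. For any half-space $\hs$ of $E$ one has $\hs^*=\ast(\hs)$, since $y\in\hs\Leftrightarrow y^*\notin\hs\Leftrightarrow y^*\in\hs^*$ by the last sentence of the previous paragraph; hence every \emph{closed} half-space of $E$ is clopen. Finally, no two distinct walls of $E$ are nested: if $\hs_1\subseteq\hs_2$ were half-spaces of $E$ belonging to distinct walls, then for $z\in\hs_2\setminus\hs_1$ its antipode would satisfy $z^*\in\hs_2^*\cap\hs_1\subseteq\hs_1^*\cap\hs_1=\varnothing$; so every pair of walls of $E$ is transverse.

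To conclude, I would invoke Corollary~\ref{cor:separatingcollection} for $E$ to choose a countable collection $\Hc$ of closed — hence, by the above, clopen — half-spaces separating the points of $E$, and consider $\iota_{\Wc_{\Hc}}\colon E\to 2^{\Wc_{\Hc}}$: it is a median map (each coordinate is a median map to $\{0,1\}$ by the "at least two" fact), continuous (the coordinates are characteristic functions of clopen sets), and injective since $\Hc$ separates points. It is also surjective: the half-spaces selected by any $\xi\in 2^{\Wc_{\Hc}}$ are clopen and convex, any finitely many of them meet by transversality and Helly's Theorem~\ref{lem:helly}, so by compactness of $E$ they all meet in a point mapping to $\xi$. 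As $\Wc_{\Hc}$ is countable, $2^{\Wc_{\Hc}}$ is a sclocc cube, so Lemma~\ref{lem:bijectioniscontinuous} makes $\iota_{\Wc_{\Hc}}$ an isomorphism; thus $E$ is a cube, and it is finite since its intervals are countable while $2^{\Wc_{\Hc}}$ contains the interval $[\mathbf{0},\mathbf{1}]$ of cardinality $2^{|\Wc_{\Hc}|}$. Being also closed in $M$, $E\in\Cub(M)$, as desired.
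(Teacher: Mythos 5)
Your proof is correct, but it reaches the result by a partly different route than the paper.

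For closure under the median, the paper argues directly by contradiction: a closed half-space $\hs$ of $A$ separating $z$ from $m\bigl(z,\, m(x_1,x_2,x_3),\, m(x_1^*,x_2^*,x_3^*)\bigr)$ would, by convexity, force some pair $x_i,x_i^*$ to lie on the same side, contradicting $z\in[x_i,x_i^*]$. You instead formalise a wall-counting criterion ($p,q$ are antipodal in $A$ iff every wall of $A$ separates them) and deduce the antipodality of $m(x_1,x_2,x_3)$ and $m(x_1^*,x_2^*,x_3^*)$ from the fact that each wall separates exactly one member of each pair $x_i,x_i^*$. This is equivalent and entirely sound, essentially a rephrasing of the same separation fact with a bit more scaffolding. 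The larger divergence is in the cube step: the paper reduces to $M=A=\Ends(A)$ and cites an external criterion (Corollary~3.4 of \cite{BaderTaller}, that a sclocc median algebra whose clopen walls are separating and transverse is a cube), whereas you reconstruct that criterion in-place: you verify that $E$ is itself sclocc, that closed half-spaces of $E$ are clopen (via the antipodal homeomorphism) and that walls of $E$ are pairwise transverse, then take a countable separating family from Corollary~\ref{cor:separatingcollection}, apply Helly's Theorem~\ref{lem:helly} with compactness to get surjectivity of $\iota_{\Wc_{\Hc}}$, and finish with Lemma~\ref{lem:bijectioniscontinuous}. Your version is self-contained at the cost of reproving something the paper outsources.

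One blemish: the closing assertion that $E$ is finite "since its intervals are countable" is both unjustified and unnecessary here. This lemma lives in the subsection that assumes only sclocc; the countable-intervals hypothesis is not in force, and $\Cub(M)$ as defined is the set of all closed subcubes, with no finiteness requirement (the lemma is subsequently applied, e.g.\ via Corollary~\ref{cor:suppintiscub} and Lemma~\ref{lem:cubclosed}, in settings where the subcube need not be finite). Drop that final sentence; the rest of the argument stands.
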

\begin{proof}
    First, let us prove that $\Ends(A)$ is a closed median subalgebra. 
    Let $x_1,x_2,x_3\in \Ends(A)$, and let $x_1^*,x_2^*,x_3^*\in \Ends(A)$ be their antipodal elements in $A$. 
    We claim that $m(x_1^*,x_2^*,x_3^*)$ is the antipodal in $A$ of $m(x_1,x_2,x_3)$. 
    Namely, that for every $z\in A$ $$(*) \ m\big(z,m(x_1,x_2,x_3),m(x_1^*,x_2^*,x_3^*)\big)= z$$
    
    Fix $z\in A$. 
    For the sake of contradiction, suppose that we can find $$\hs\in \Delta\big(z,m\big(z,m(x_1,x_2,x_3),m(x_1^*,x_2^*,x_3^*)\big)\big)$$

    Since $z\in \hs$, it follows by convexity of $\hs$ and $\hs^*$, that both $m(x_1,x_2,x_3)$ and $m(x_1^*,x_2^*,x_3^*)$ are in $\hs^*$. 
    By convexity, again, there must be $3\geq i\geq 1$ such that both $x_i$ and $x_i^*$ are in $\hs^*$. 
    But now, $$z\in A=[x_i,x_i^*]\cap A\subset \hs^*$$
    which is a contradiction. 
    Hence, $(*)$ holds. 
    
    Suppose now that there is a sequence $x_n\in \Ends(A)$ that converges to some element $x\in A$. 
    Let $x^*_n\in \Ends(A)$ be the corresponding sequence of antipodal elements. 
    After replacing $x_n$ with some sub-sequence, if necessary, we may assume that $x_n^*$ converges to an element $x^*\in A$. 
    Fix $z\in A$. 
    By continuity of the median operator,
    \begin{align*}
        z & = \underset{n\rightarrow \infty}{\lim} z = \underset{n\rightarrow \infty}{\lim} m(x_n,x_n^*,z) \\
        & = m(x,x^*,z)
    \end{align*}
    Thus, $A=[x,x^*]\cap A$, which implies that $x\in \Ends(A)$, as needed.

    We now show that $\Ends(A)$ isomorphic to a cube. 
    By the above, we may assume without loss of generality that $\Ends(A)=A=M$.
    Denote by $\mathcal{W}^{\circ}$ the collection of all clopen walls of $M$.
    That is, the collection of all pairs $\{\hs,\hs^*\}$ for a clopen half-space $\hs\subset M$.

    It follows by corollary 3.4 in \cite{BaderTaller}, that it is sufficient (and necessary) to show that $\mathcal{W}^{\circ}$ is separating points in $M$ and is transverse.

    For starter, let us show that for any closed half-space $\hs$, $\{\hs,\hs^*\}\in \mathcal{W}^{\circ}$. Notice that for every $x\in M$, the antipodal element $x^*$ is unique. 
    Indeed, if $x^*$ and $x'^* $ are antipodal elements of $x$, then $$x^*=m(x,x^*,x'^*)=x'^*$$
    Moreover, the above discussion implies that if $x_n$ converges to $x$, then $x_n^*$ converges to $x^*$. 
    Thus, the antipodal map, $*:x\mapsto x^*$, is a continuous median automorphism of $M$.
    If $\hs$ is a non-empty half-space, such that also $\hs^*$ is non-empty, then $\hs=*^{-1}(\hs^*)$ (otherwise it would imply that there exists $x\in M$, such that $M=[x,x^*]\subset \mathfrak{h}$ or $M=[x,x^*]\subset \mathfrak{h}^*$). 
    In particular, if $\hs$ is closed, then it is also open, as needed.

    Let $\{\hs,\hs^*\}$ and $\{\fs,\fs^*\}$ be two walls in $\mathcal{W}^{\circ}$.
    Suppose that they are not transverse.
    Without loss of generality we may assume that $\hs\subset \fs$. 
    But then, $\hs^*= *(\hs)\subset *(\fs)=\fs^*\subset \hs^*$ which implies that $\{\hs,\hs^*\}=\{\fs,\fs^*\}$.
    That is, $\mathcal{W}^{\circ}$ is transverse, as required.
\end{proof}

\begin{corollary}\label{cor:suppintiscub}
    For $I\in\Jc_2$, $\Ends(I)\in \Cub(M)$.
\end{corollary}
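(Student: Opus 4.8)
The plan is to read this off directly from Lemma~\ref{lem:suparecubes}. Write $I=[x,y]$ for the two generators $x,y$ of $I$; by Lemma~\ref{lem:convexisjoin} this union of intervals is precisely the interval $[x,y]$, which in our compact setting is a closed subset of $M$, hence an element of $\Cc M$. Since $I$ is convex it is in particular a median subalgebra: for any $a,b,c\in I$ we have $m(a,b,c)\in[a,b]\subseteq I$. Thus $I$ is a closed median subalgebra, and Lemma~\ref{lem:suparecubes} applies to $A=I$ the moment we check that $\Ends(I)\neq\varnothing$.

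The remaining point is exactly that $\Ends(I)$ is non-empty, and in fact contains the two generators. Unwinding the definition of the interval, $z\in[x,y]$ means precisely $m(x,y,z)=z$; hence $m(x,y,z)=z$ for every $z\in I$, which says exactly that $y$ is an antipodal element of $x$ in $I$ (and symmetrically $x$ is antipodal to $y$ in $I$). Therefore $x,y\in\Ends(I)$, so $\Ends(I)\neq\varnothing$, and Lemma~\ref{lem:suparecubes} yields $\Ends(I)\in\Cub(M)$.

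I do not expect a genuine obstacle here: the corollary is just Lemma~\ref{lem:suparecubes} specialised to intervals, and the only facts that need to be named are that intervals are closed median subalgebras (both already recorded in the excerpt) and that an interval's own endpoints witness the non-vanishing of $\Ends$. The slightly subtle-looking hypothesis of Lemma~\ref{lem:suparecubes}, namely $\Ends(A)\neq\varnothing$, is automatic for $A\in\Jc_2$ precisely because membership in $[x,y]$ is defined by the antipodality relation $m(x,y,z)=z$.
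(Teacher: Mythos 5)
Your proof is correct and matches the paper's intent exactly: the corollary is stated with no separate proof because it is precisely Lemma~\ref{lem:suparecubes} applied to $A=I$, and your write-up just makes the routine verifications explicit (that an element of $\Jc_2$ is a closed interval $[x,y]$, hence a closed median subalgebra, and that $x,y\in\Ends(I)$ since $z\in[x,y]$ is by definition $m(x,y,z)=z$, which is exactly the antipodality relation noted in the paper after the definition of $\Ends$).
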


In the course of proving lemma \ref{lem:suparecubes}, we also proved the following lemma:
\begin{lemma}\label{lem:clopenhalfspace}
    Suppose that $M$ is a \sclocc median algebra which is a cube. Then, every closed half-space is also open. 
\end{lemma}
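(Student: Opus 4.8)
The plan is to isolate the argument that was already carried out inside the proof of Lemma~\ref{lem:suparecubes}. The heart of the matter is the \emph{antipodal map}. Since $M$ is a cube, every point $x\in M$ admits an antipodal point $x^*\in M$ (in a coordinate model $M\simeq\{0,1\}^\kappa$ it is the coordinate-wise complement), so $M=\Ends(M)$. First I would record that this antipodal is unique: if $x^*,x'^*$ are both antipodal to $x$ then $x^*=m(x,x^*,x'^*)=x'^*$. Hence $*\colon x\mapsto x^*$ is a well-defined map, and it is a median morphism, since for $z\in M$ and $x_1,x_2,x_3\in M$ one checks $m\big(z,m(x_1,x_2,x_3),m(x_1^*,x_2^*,x_3^*)\big)=z$ exactly as in Lemma~\ref{lem:suparecubes}, so $m(x_1^*,x_2^*,x_3^*)$ is antipodal to $m(x_1,x_2,x_3)$. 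It is continuous: if $x_n\to x$, any subsequential limit $y$ of $(x_n^*)$ satisfies $z=\lim_n m(x_n,x_n^*,z)=m(x,y,z)$ for all $z$, whence $y=x^*$ by uniqueness; since $M$ is compact metrizable, this forces $x_n^*\to x^*$. Being an involution, $*$ is its own inverse, hence a homeomorphism. (In the coordinate model all of this is immediate.)

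Next, fix a closed half-space $\hs\subset M$. If $\hs\in\{\varnothing,M\}$ it is clopen for trivial reasons, so assume $\hs$ and $\hs^*:=M\setminus\hs$ are both non-empty. I claim $\hs=*(\hs^*)=*^{-1}(\hs^*)$. For $*(\hs^*)\subseteq\hs$: were there $y\in\hs^*$ with $y^*\notin\hs$, i.e.\ $y^*\in\hs^*$, then convexity of $\hs^*$ would give $M=[y,y^*]\subseteq\hs^*$, so $\hs=\varnothing$. For $\hs\subseteq *(\hs^*)$: as $*$ is an involution, $x\in *(\hs^*)$ iff $x^*\in\hs^*$, so were there $x\in\hs$ with $x\notin *(\hs^*)$ then $x^*\in\hs$, and convexity of $\hs$ would give $M=[x,x^*]\subseteq\hs$, so $\hs^*=\varnothing$. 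Both conclusions contradict non-triviality, proving the claim.

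Finally, since $\hs$ is closed, $\hs^*$ is open, and since $*$ is a homeomorphism, $*(\hs^*)=\hs$ is open; hence $\hs$ is clopen. I do not expect a genuine obstacle here — the statement is essentially a byproduct of Lemma~\ref{lem:suparecubes} — the only points requiring a little care are establishing that the antipodal map is a homeomorphism (equivalently, its continuity, via the uniqueness of antipodals) and treating the two trivial half-spaces separately, both of which are routine.
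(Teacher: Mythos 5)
Your proof is correct and mirrors the paper's own argument, which is embedded in the proof of Lemma~\ref{lem:suparecubes}: establish uniqueness of antipodes, deduce that the antipodal involution $*$ is a continuous (hence homeo-) median automorphism via a compactness/uniqueness argument, show $\hs=*^{-1}(\hs^*)$ for a nontrivial half-space using convexity of $\hs$ and $\hs^*$, and conclude that a closed $\hs$ equals the $*$-preimage of the open set $\hs^*$. You have merely made explicit a few steps (the trivial half-spaces, the two inclusions proving $\hs=*(\hs^*)$) that the paper leaves compressed.
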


\begin{proof}[Proof of lemma \ref{lem:cubclosed}]
    Let $C_n\in \Cub(M)$ be a sequence that converges to some closed set $C\in \mathcal{C}M$. 
    It follows by lemma \ref{lem:suparecubes} that we only need to show that $C$ is a median subalgebra and that $\Ends(C)=C$.

    If $x_n,y_n$ and $z_n\in C_n$ converges to $x,y$ and $z\in C$, respectively, then clearly, $m(x_n,y_n,z_n)\in C_n$ converges to $m(x,y,z)\in C$.
    
    Pick $x\in C$, and $x_n\in C_n$ such that $x_n$ converges to $x$. 
    Let $x_n^*\in C_n$ be the corresponding sequence of antipodal elements, and suppose it converges to $x^*\in C$ (restrict to a sub-sequence, if necessary).
    Fix $z\in C$, and some (sub-)sequence $z_n\in C_n$ that converges to $z$. 
    Then, 
    
    \begin{align*}
        z & = \underset{n\rightarrow \infty}{\lim} z = \underset{n\rightarrow \infty}{\lim} m(x_n,x_n^*,z) \\
        & = m(x,x^*,z)
    \end{align*}
\end{proof}

\subsection{Mapping Properties}
For the rest of this section, fix a \textit{sclocc} median algebra M, and a discrete group $G$ that acts on M continuously by automorphisms.
Note that in particular, we are not assuming that intervals in M are countable, nor that subcubes of M are of finite dimension (unless stated otherwise).

\begin{lemma}\label{lem:continousmapping}
    The map $\Jc_n^2\rightarrow \Jc_n^{\subseteq}$, defined by $(I,J)\mapsto (I,\pi_I(J))$, is continuous. 
\end{lemma}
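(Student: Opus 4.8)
The plan is to first record that the map is well defined, and then prove continuity by a compactness argument that isolates a single ``moving gate-projection'' fact. For well-definedness: if $I=\Conv(\{x_1,\dots,x_n\})\in\Jc_n$, then $I$ is compact and convex, hence gate-convex, so $\pi_I$ is defined and, by Lemma~\ref{lem:fio1}, is a continuous \emph{median morphism} $M\to M$. Since $\Conv(\{y_1,\dots,y_n\})$ is built by iterating joins (as in the paragraph preceding Definition~\ref{def:joinspace}) and $\pi_I$ commutes with the median, one gets $\pi_I(J)=\Conv(\{\pi_I(y_1),\dots,\pi_I(y_n)\})\in\Jc_n$; together with the obvious inclusion $\pi_I(J)\subseteq I$ this shows $(I,\pi_I(J))\in\Jc_n^{\subseteq}$, so the map does land where claimed.

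The technical core I would prove is the following: if $x_k\to x$ in $M$ and $I_k\to I$ in $\Jc_n$ for the Hausdorff metric, then $\pi_{I_k}(x_k)\to\pi_I(x)$. To prove this, since $M$ is compact it suffices to show that every subsequential limit $p$ of $\pi_{I_k}(x_k)$ equals $\pi_I(x)$. First, $\pi_{I_k}(x_k)\in I_k$ and $I_k\to I$ force $p\in I$ (a Hausdorff limit of points lying in the $I_k$ lies in the closed set $I$). Second, given any $z\in I$, use $I_k\to I$ to choose $z_k\in I_k$ with $z_k\to z$; from the identity $\pi_{I_k}(x_k)=m(x_k,z_k,\pi_{I_k}(x_k))$ and continuity of the median we get $p=m(x,z,p)$, i.e.\ $p\in[x,z]$. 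As this holds for every $z\in I$, the point $p$ is the gate of $x$ in $I$, so $p=\pi_I(x)$.

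Granting this, I would prove continuity of $(I,J)\mapsto(I,\pi_I(J))$ as follows. Let $(I_k,J_k)\to(I,J)$ in $\Jc_n^2$. By compactness of $\Cc M$, it is enough to show that any subsequential Hausdorff limit $K$ of $\pi_{I_k}(J_k)$ equals $\pi_I(J)$. For $K\subseteq\pi_I(J)$: given $p\in K$, pick $p_k=\pi_{I_k}(j_k)\to p$ with $j_k\in J_k$; passing to a subsequence $j_k\to j$, and $J_k\to J$ gives $j\in J$; by the moving gate-projection fact $\pi_{I_k}(j_k)\to\pi_I(j)$, so $p=\pi_I(j)\in\pi_I(J)$. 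For the reverse inclusion: given $q=\pi_I(j)$ with $j\in J$, use $J_k\to J$ to pick $j_k\in J_k$ with $j_k\to j$; then $\pi_{I_k}(j_k)\to\pi_I(j)=q$ and $\pi_{I_k}(j_k)\in\pi_{I_k}(J_k)$, so $q\in K$. Hence $K=\pi_I(J)$ independently of the chosen subsequence, which gives $\pi_{I_k}(J_k)\to\pi_I(J)$ and hence continuity.

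The only real content is the moving gate-projection fact, i.e.\ that the gate-projection depends continuously on its target convex set; this is where I expect the work to be, and it reduces (via compactness of $M$) to the elementary computation with the median identity $\pi_{I_k}(x_k)=m(x_k,z_k,\pi_{I_k}(x_k))$ above. Everything after that is the routine bookkeeping of extracting subsequences and checking the two Hausdorff-limit inclusions, the one subtle point being that a Hausdorff limit of subsets of the $I_k$ necessarily lies in the limit $I$.
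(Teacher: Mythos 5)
Your proposal is correct and carries out in detail what the paper's one-line proof only asserts, namely that $\pi_I$ depends continuously on $I$ (your ``moving gate-projection'' fact, proved via the characterizing identity $\pi_{I_k}(x_k)=m(x_k,z_k,\pi_{I_k}(x_k))$ and a subsequence argument in the compact space $\mathcal{C}M$). This is the natural way to flesh out the paper's sketch, so the two arguments are essentially the same.
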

\begin{proof}
    This is simply because the map $\pi_I$ is a continuous function, that depends continuously on the generators of I.  
\end{proof}




\begin{lemma}\label{lem:InducedAction}
    Let $C\subset M$ be a gate convex set. Suppose that for every $\g\in G$, $\pi_C$ is a surjection from $\g C$ to C. Then the assignment $\g.x:= \pi_C(\g x)$, for $\g\in G$ and $x\in C$, defines a continuous action of $G$ on C by median automorphisms. Moreover, $\pi_C$ is a median $G$-map between $M$ to $C$.
\end{lemma}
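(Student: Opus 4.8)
The plan is to verify, in order, that the assignment $\g.x := \pi_C(\g x)$ is (i) well-defined as a self-map of $C$, (ii) an action of $G$, (iii) by median automorphisms, and (iv) continuous; and finally that $\pi_C$ intertwines the given $G$-action on $M$ with this new action on $C$.

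First I would observe that for $x \in C$ and $\g \in G$ we have $\g x \in \g C$, so $\pi_C(\g x) \in \pi_C(\g C) = C$ by the surjectivity hypothesis applied to $\g$; hence $\g.x \in C$ and the map is well-defined. For the action axioms, the identity case is immediate since $\pi_C|_C = \mathrm{id}_C$. For composition, the key input is the identity $\pi_C(\g y) = \pi_C(\g \pi_C(y))$ valid for all $y \in M$ — this follows from Lemma \ref{lem:fio2}(3) together with the formula $\pi_{\g C}(z) = \g \pi_C(\g^{-1}z)$ recorded in the text just before the definition of topological median algebra: indeed $\pi_C \circ \pi_{\g C} = \pi_{\g C} \circ \pi_C = \pi_{C \cap \g C}$ when $C \cap \g C \neq \varnothing$, and one extracts the needed relation. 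Actually the cleanest route is: for $x\in C$ and $\g,\h\in G$, write $\g.(\h.x) = \pi_C(\g\,\pi_C(\h x))$ and compare with $(\g\h).x = \pi_C(\g\h x)$; applying the automorphism $\g$ to the gate relation $\pi_C(\h x)\in[\h x, c]$ for $c\in C$ shows $\g\,\pi_C(\h x)$ is the gate of $\g\h x$ in $\g C$, i.e. $\g\,\pi_C(\h x) = \pi_{\g C}(\g\h x)$; then projecting once more to $C$ and using Lemma \ref{lem:fio2}(3) (so that $\pi_C\circ\pi_{\g C} = \pi_C$ on the relevant set, since gates factor through intermediate gate-convex sets — here using $C\cap \g C\neq\varnothing$, which holds as $\pi_C(\g C)=C$ forces the intersection nonempty) yields $\pi_C(\g\,\pi_C(\h x)) = \pi_C(\g\h x)$, as desired. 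That each $\g.(-)$ is a median morphism $C\to C$ follows from Lemma \ref{lem:fio1}: $\pi_C$ composed with the median automorphism $x\mapsto \g x$ is a median map, and being a bijection of $C$ (its inverse is $\h.(-)$ with $\h = \g^{-1}$) with continuous inverse it is an automorphism; continuity of the action map $G\times C\to C$ is inherited from continuity of the $G$-action on $M$ and continuity of $\pi_C$ (Lemma \ref{lem:fio2} / the compactness discussion in \S\ref{sec:medianalgebras}).

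Finally, that $\pi_C:M\to C$ is $G$-equivariant is exactly the identity $\pi_C(\g y) = \pi_C(\g\,\pi_C(y)) = \g.\pi_C(y)$ for all $y\in M$, which we established above; and $\pi_C$ is a median map by Lemma \ref{lem:fio1}.

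The main obstacle I anticipate is the cocycle/composition identity in step (ii): one must be careful that $C \cap \g C \neq \varnothing$ so that Lemma \ref{lem:fio2}(3) applies and gives $\pi_C \circ \pi_{\g C} = \pi_C$ (the relevant instance of "if $C_2 \subset C_1$ then $\pi_{C_2} = \pi_{C_1}\circ\pi_{C_2}$" used with the roles arranged via the intersection), and that the translate formula $\pi_{\g C} = \g \circ \pi_C \circ \g^{-1}$ is invoked correctly. The surjectivity hypothesis $\pi_C(\g C) = C$ is what guarantees both that $\g.(-)$ lands in $C$ and that $C\cap\g C\neq\varnothing$, so it is used essentially. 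Everything else is a routine unwinding of definitions.
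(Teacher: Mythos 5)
Your overall structure is right and matches the paper's, but the key step — establishing $\pi_C\circ\pi_{\g C}=\pi_C$ — is argued incorrectly. You invoke Lemma~\ref{lem:fio2}(3), which (when $C\cap\g C\neq\varnothing$) gives $\pi_C\circ\pi_{\g C}=\pi_{C\cap\g C}$; this equals $\pi_C$ only when $C\subset\g C$, which is not part of the hypothesis. Worse, your claim that $\pi_C(\g C)=C$ forces $C\cap\g C\neq\varnothing$ is simply false: in $\{0,1\}^2$ take $C=\{0\}\times\{0,1\}$ and $\g C=\{1\}\times\{0,1\}$; then $\pi_C(\g C)=C$ and $\pi_{\g C}(C)=\g C$ yet the two sets are disjoint. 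So both the citation and the auxiliary nonemptiness claim fail, and "one extracts the needed relation" does not go through.

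The correct ingredient, and the one the paper uses, is Lemma~\ref{lem:fio2}(2): for gate-convex $C_1,C_2$, the set $\pi_{C_1}(C_2)$ is gate-convex with gate-projection $\pi_{C_1}\circ\pi_{C_2}$. Taking $C_1=C$, $C_2=\g C$ and using the hypothesis $\pi_C(\g C)=C$, this says $\pi_C\circ\pi_{\g C}$ is \emph{the} gate-projection onto $C$; by uniqueness of gate-projections it must equal $\pi_C$. No intersection hypothesis is needed, and the identity holds on all of $M$, which is also what you need for the $G$-equivariance of $\pi_C$ at the end. The rest of your argument (well-definedness from surjectivity, $\g\,\pi_C(z)=\pi_{\g C}(\g z)$ via the translation formula for gate-projections, the median-morphism step via Lemma~\ref{lem:fio1}, and continuity) is fine and agrees with the paper.
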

\begin{proof}
    The continuity part is clear, as the new action is defined by the composition of continuous functions. 
    We now show that this is an action. Let $x\in C$ and $\g',\g\in G$. By lemma \ref{lem:fio2}.(2), as $\pi_C|_{\g C}$ is surjective, $\pi_C\circ \pi_{\g C}=\pi_C$. So we have: 
$$\g.(\g'.x)=\pi_C(\g\pi_C(\g'x))=\pi_C(\pi_{\g C}(\g\g'x))=\pi_C(\g\g'x)=(\g\g').x$$ 
As required.

If $y,z\in C$ are another two elements, then by lemma \ref{lem:fio1} we have
\begin{align*}
  \g.m(x,y,z)&=\pi_C(\g m(x,y,z))=\pi_C(m(\g x, \g y, \g z))\\
    &=m(\pi_C(\g x),\pi_C(\g y), \pi_C(\g z)) = m(\g .x,\g .y,\g .z)  
\end{align*}
Finely, note that by the definition of the induced action, $\pi_C$ is a $G$-map, and it is a median map by lemma \ref{lem:fio1}.
\end{proof}

\begin{lemma}\label{lem:cubetoint}
    The map $C\mapsto I_C:=\Conv(C)$, is a continuous $G$-map from $\Cub(M)$ to $\Jc_2$.
\end{lemma}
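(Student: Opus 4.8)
The plan is to first describe $I_C=\Conv(C)$ explicitly, then read off well-definedness and $G$-equivariance almost for free, and finally prove continuity by a Hausdorff-limit argument using only continuity of the median.

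\emph{Step 1: $I_C$ is a genuine interval.} The point to keep in mind is that a subcube $C\subseteq M$ need not be convex, yet its convex hull is always an interval between antipodal vertices. Fixing an isomorphism $C\cong\{0,1\}^k$, let $x,x^*\in C$ be the images of the two constant tuples; they are \emph{antipodal in} $C$, i.e. $m(x,x^*,z)=z$ for all $z\in C$. Then $C\subseteq[x,x^*]$, so $\Conv(C)\subseteq[x,x^*]$ since intervals are convex, and $x,x^*\in C$ gives the reverse inclusion $[x,x^*]\subseteq\Conv(C)$. Hence $I_C=[x,x^*]\in\Jc_2$, and the same computation shows this does not depend on which antipodal pair of $C$ is chosen. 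For later use I record the two elementary facts I just invoked: $m(a,b,c)\in[a,b]$ always, and $[a,b]$ is closed and convex.

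\emph{Step 2: equivariance.} For $g\in G$, the automorphism $g$ maps $C$ to the subcube $gC$ and commutes with convex hulls, so $I_{gC}=\Conv(gC)=g\,\Conv(C)=g\,I_C$. This is immediate once Step 1 identifies the objects involved.

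\emph{Step 3: continuity.} Since $M$ is sclocc it is compact metrizable, so $\Cc M$ is a compact metric space and, by Lemmas~\ref{lem:cubclosed} and \ref{lem:joinclosed}, $\Cub(M)$ and $\Jc_2$ are compact. As $\Jc_2$ is metrizable and compact, it is enough to prove: if $C_n\to C$ in $\Cub(M)$ and $I_{C_n}\to I$ in $\Jc_2$ along some subsequence, then $I=I_C$. I would choose, for each $n$, an antipodal pair $x_n,x_n^*\in C_n$, and by compactness of $M$ pass to a further subsequence so that $x_n\to x$ and $x_n^*\to x^*$; then $x,x^*\in C$ because $C_n\to C$ in the Hausdorff metric. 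For $z\in C$, pick $z_n\in C_n$ with $z_n\to z$ and pass to the limit in $m(x_n,x_n^*,z_n)=z_n$ to get $m(x,x^*,z)=z$; thus $C\subseteq[x,x^*]$ and, exactly as in Step 1, $I_C=[x,x^*]$. On the other hand $I_{C_n}=[x_n,x_n^*]$ by Step 1, and $[x_n,x_n^*]\to[x,x^*]$ in the Hausdorff metric: any subsequential limit $w$ of points $w_n\in[x_n,x_n^*]$ satisfies $m(x,x^*,w)=w$, while every $w\in[x,x^*]$ is the limit of the points $w_n:=m(x_n,x_n^*,w)\in[x_n,x_n^*]$. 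Comparing $I_{C_n}\to[x,x^*]=I_C$ with $I_{C_n}\to I$ along this subsequence gives $I=I_C$.

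\emph{Expected main obstacle.} There is no deep difficulty; the one thing that must not be overlooked is that a subcube is typically not convex, so $I_C\neq C$ in general, and it is the description of $I_C$ as the interval spanned by a pair of antipodal vertices that makes both well-definedness and the limit computation work. After that, Step 3 is routine bookkeeping with Kuratowski/Hausdorff limits, using only continuity of $m$ and the relation $m(a,b,c)\in[a,b]$.
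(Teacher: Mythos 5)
Your proof is correct and takes essentially the same approach as the paper: identify $I_C=[x,x^*]$ for an antipodal pair, then pass to the limit in $m(x_n,x_n^*,z)=z$ using continuity of the median and Hausdorff/Kuratowski convergence. The only minor organizational difference is that you choose antipodal pairs in each $C_n$ and extract convergent subsequences, whereas the paper fixes $x,x^*\in C$ first, lifts to $x_n\in C_n$, and then invokes continuity of the antipodal map (via the proof of Lemma~\ref{lem:suparecubes}) to get $x_n^*\to x^*$; your variant quietly avoids that step.
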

\begin{proof}
    The fact that this map is a $G$-map, is clear. We left to prove that it is continuous.
    
    Let $C_n$ be a sequence of subcubes that converges to $C\in \Cub(M)$. 
    Fix some $x\in C$, its antipodal $x^*\in C$, some sequence $x_n\in C_n$ that converges to $x$, and the corresponding sequence of antipodal points $x_n^*\in C_n$.
    We recall that as in the proof of lemma \ref{lem:suparecubes}, it follows that $x_n^*$ converges to $x^*$.

    Fix $z\in I_C$, and denote $z_n:=m(x_n,x_n^*,z)\in I_{C_n}$.
    By the continuity of the median, we have
    $$\underset{n}{\lim} \ z_n=\underset{n}{\lim} \ m(x_n,x_n^*,z)=m(x,x^*,z)=z$$

    On the other hand, let $z_n\in I_{C_n}$ be a (sub-)sequence that converges to some $z\in M$.
    Then, again, by contiuity of the median:
    \begin{align*}
        z & = \underset{n}{\lim} \ z_n = \underset{n}{\lim} \ m(x_n,x_n^*,z_n) \\
        & = m(x,x^*,z)\in I_C
    \end{align*}
    Thus, indeed, $I_{C_n}$ converges to $I_C$.
\end{proof}

It is natural to wonder whether the map in the other direction, namely, the map that assigns to every interval its collection of ends, is continuous as well. 
Apparently, the answer is no.
This is reflected by the fact that the image of the $\Ends$ map is not always a closed subsets of $\Cub(M)$. 

This image, on the other hand, can be identified as the collection of maximal subcubes.
Here, we mean that $C$ is maximal, in the the sense that if $C'$ is another subcube containing $C$, than $C$ is a proper face of $C'$.
Denote this collection by $\Cub_m(M)$.

\begin{lemma}\label{lem:maximalcubesareGdelta}
    The set $\Cub_{m}(M)$ is a $G_{\delta}$ subset of $\Cub(M)$.
\end{lemma}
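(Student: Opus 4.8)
The plan is to exhibit $\Cub_m(M)$ as a countable intersection of open sets by quantifying "maximality" against a countable separating family. First I would fix, once and for all, the countable collection $\Hc$ of closed half-spaces provided by Corollary~\ref{cor:separatingcollection}, so that any two distinct points of $M$ are separated by a member of $\Hc$ whose interior contains one of them. The key reformulation is: a subcube $C\in\Cub(M)$ fails to be maximal precisely when there is a subcube $C'\supsetneq C$ in which $C$ is \emph{not} a proper face, equivalently (after passing to the convex hull and using Corollary~\ref{cor:suppintiscub}) when the interval $I_C=\Conv(C)$ can be enlarged in a direction transverse to all the walls of $C$. Concretely, non-maximality of $C$ means there exists a point $p\in M\setminus C$ such that $\Ends(\Conv(C\cup\{p\}))\supsetneq$ (a face isomorphic to) $C$; I would first prove this elementary characterization of $\Cub_m(M)$ using the structure of cubes and Lemma~\ref{lem:suparecubes}.

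Next I would turn this into a $G_\delta$ description. For each half-space $\hs\in\Hc$ and each rational $\varepsilon>0$, define the set
\[
U_{\hs,\varepsilon}:=\{\,C\in\Cub(M)\ :\ \text{$C$ admits an enlargement across $\hs$ of "size" $\ge\varepsilon$}\,\},
\]
where the enlargement is measured by the Hausdorff distance of $\Ends(\Conv(C\cup\{p\}))$ from $C$ for a suitable witness $p$; I would make the "enlargement" condition precise using the continuous maps $C\mapsto I_C$ (Lemma~\ref{lem:cubetoint}), the gate-projection maps $\pi_{I}$ (Lemma~\ref{lem:continousmapping}), and the $\Ends$ operation on $\Jc_2$ (Corollary~\ref{cor:suppintiscub}). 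The point is that each $U_{\hs,\varepsilon}$ is \emph{open} in $\Cub(M)$: if $C$ has a genuine enlargement, nearby cubes have one too, by continuity of convex hull, gate-projection, and the median operator, together with compactness of $M$ to extract a limiting witness $p$. Then
\[
\Cub(M)\setminus\Cub_m(M)=\bigcup_{\hs\in\Hc}\ \bigcup_{\varepsilon\in\bbQ_{>0}} U_{\hs,\varepsilon}
\]
is an $F_\sigma$ (a countable union of open sets is in particular $F_\sigma$ in a metric space after taking closures—more carefully, I would arrange each $U_{\hs,\varepsilon}$ to be open and note its complement is closed, so the displayed set is a countable union of sets whose complements are closed, i.e.\ $\Cub_m(M)$ is a countable intersection of open sets), hence $\Cub_m(M)$ is $G_\delta$. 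The $G$-invariance of the whole description is automatic since $\Hc$ need not be $G$-invariant but the union over $\hs\in\Hc$ can be replaced by the union over the $G$-orbit of $\Hc$, which is still countable as $G$ is countable; alternatively one observes $\Cub_m(M)$ is manifestly $G$-invariant and being $G_\delta$ is a topological property preserved under the homeomorphisms $g\colon\Cub(M)\to\Cub(M)$.

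The main obstacle I anticipate is the \emph{openness} of the sets $U_{\hs,\varepsilon}$, i.e.\ lower semicontinuity of "how much a cube can be enlarged." The subtlety is exactly the phenomenon flagged just before the lemma: the $\Ends$ map is not continuous and its image is not closed, so one cannot naively say "the space of enlargements varies continuously." The fix is to phrase the enlargement condition as an \emph{existence} statement about a witness point $p$ lying in a closed (hence compact) region, so that for a convergent sequence $C_n\to C$ with witnesses $p_n$ one passes to a subsequential limit $p$ and checks, using continuity of $m$ and of the gate-projections, that $p$ witnesses an enlargement of $C$ of at least the same size; this shows the \emph{complement} of $U_{\hs,\varepsilon}$ is closed. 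Care is needed to ensure the "size $\ge\varepsilon$" bound survives the limit, which is where choosing the measurement via Hausdorff distance to $\Ends(\Conv(C\cup\{p\}))$—a quantity built from continuous operations on $\Jc_2$—pays off. Once this semicontinuity is in hand, the $G_\delta$ conclusion is immediate.
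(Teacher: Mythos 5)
Your core idea—quantify failure of maximality by an $\varepsilon$-level set and prove closedness of each level set via a limit-of-witnesses argument—is exactly the paper's mechanism, and your sketched limit argument (pass $C_n\to C$, extract a convergent witness, use continuity of $m$ and the convex-hull map) is the right one. However, there is a genuine logical error in how you package it. You assert throughout that the sets $U_{\hs,\varepsilon}$ are \emph{open}, and in the final paragraph you say the limit argument "shows the complement of $U_{\hs,\varepsilon}$ is closed." That argument (take $C_n\in U_{\hs,\varepsilon}$ converging to $C$, pass witnesses to a limit $p$, conclude $C\in U_{\hs,\varepsilon}$) shows $U_{\hs,\varepsilon}$ itself is \emph{closed}, not that its complement is. If the $U_{\hs,\varepsilon}$ were actually open, their union would be open, so $\Cub_m(M)$ would be closed—but the paper explicitly flags, just before the lemma, that the image of the $\Ends$ map is \emph{not} closed. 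So "open" cannot be correct. What you want, and what your sketched argument delivers, is: each level set is closed, their countable union (the complement of $\Cub_m(M)$) is $F_\sigma$, and hence $\Cub_m(M)$ is $G_\delta$.

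Beyond this, the half-space decomposition is superfluous. The paper's proof uses no half-spaces at all: one directly defines $F_\varepsilon:=\{C\in\Cub(M)\ :\ d_H(C,\Ends(I_C))\geq \varepsilon\}$, notes $\Cub(M)\setminus\Cub_m(M)=\bigcup_n F_{1/n}$, and proves $F_\varepsilon$ is closed. The closedness argument is the one you gesture at: for $C_n\in F_\varepsilon$ converging to $C$, first pass to a subsequence so that $\Ends(I_{C_n})$ converges to some cube $C'$ (Lemma~\ref{lem:cubclosed}); then the continuity of $C\mapsto I_C$ (Lemma~\ref{lem:cubetoint}) gives the inclusions $C\subset C'\subset \Ends(I_C)$; finally, choosing $x_n\in\Ends(I_{C_n})$ at distance $\geq\varepsilon$ from $C_n$ and passing to a limit $x\in C'$ shows $d_H(C,\Ends(I_C))\geq d_H(C,C')\geq\varepsilon$. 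Your appeal to a separating family $\Hc$ and an "enlargement across $\hs$" adds an extra quantifier the argument never uses, and makes the definition of $U_{\hs,\varepsilon}$ vague where the paper's $d_H(C,\Ends(I_C))\geq\varepsilon$ is concrete. Likewise, your characterization of non-maximality via a witness point $p$ with $\Ends(\Conv(C\cup\{p\}))\supsetneq C$ is never pinned down and is not needed: the paper just uses (the implicit identification, stated before the lemma) that $C$ is maximal iff $C=\Ends(I_C)$.

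In short: right limit argument, wrong open/closed bookkeeping, and unnecessary machinery. Replace "open" by "closed" for the $\varepsilon$-level sets, drop the half-space index entirely, and measure failure of maximality directly by $d_H(C,\Ends(I_C))$.
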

\begin{proof}
    Define $F_{\epsilon}:=\{ C\in \Cub(M) \ | \ d_H(C,\Ends(I_C))\geq \epsilon\}$, where, as always, $I_C$ stands for the convex hull of C. 
    When $\epsilon>0$, then, $F_{\epsilon}\subset \Cub(M)\backslash \Cub_m(M)$.
    On the other hand, if $C\notin \Cub_m(M)$, then $\epsilon':=d_H(C,\Ends(I_C))>0$, and in particular, $C\in F_{\epsilon'}$. 
    Thus, we have 
    $$\Cub(M)\backslash \Cub_m(M) = \underset{n\in\bbN}{\bigcup}F_{1/n}$$
    where the right hand side is an increasing union of subsets. 
    Therefore, it is enough to show that $F_{\epsilon}$ is a closed set.

    Fix some sequence $C_n\in F_{\epsilon}$, that converges to some $C\in \Cub(M)$. 
    After restricting to a subsequence, if necessary, we may assume that $C_n':=\Ends(I_{C_n})$ converges to some subcube $C'$. We have the following short sequence of inclusions:
    $$C\subset C'\subset \Ends(I_C)=\Ends(I_{C'})$$
    We note that the later equality follows from the fact that $I_{C_n}$ converges to $I_C$. See lemma \ref{lem:cubetoint}. 
    Therefore, for any limit $x\in I_C$ of some $x_n\in \Ends(I_{C_n})$, there exists an antipodal point $x^*$, which is the limit of $x_n^*$.
    Thus, $x\in \Ends(I_C)$.
    
    For every n, the fact that $d_H(C_n,\Ends(I_{C_n}))\geq \epsilon$, implies that there exists $x_n\in \Ends(I_{C_n})$ with $d(\{x_n\},C_n)\geq \epsilon$, that is, for every $y_n\in C_n$, $d(x_n,y_n)\geq \epsilon$. 
    After further restricting to another sub-sequences, if necessary, we may assume that $x_n$ converges to some $x\in C'$.
    Take $y\in C$, and $y_n\in C_n$ that converges to $y$. Then by continuity of the metric, we have $$\epsilon \leq \underset{n}{\lim} \ d_H(\{x_n\},\{y_n\}) = d_H(\{x\},\{y\})=d(x,y)$$
    In particular,
    $$\epsilon \leq d_H(C,C')\leq d_H(C,\Ends(I_C))$$
    Hence, $C\in F_{\epsilon}$, as required.
    
\end{proof}

\begin{corollary}\label{cor:inttocubeismeasurable}
    The assignment  $I\mapsto \Ends(I)$, is a Borel $G$-map isomorphism from $\Jc_2$ to $\Cub_m(M)$.
\end{corollary}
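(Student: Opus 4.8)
The plan is to recognise the assignment $I\mapsto\Ends(I)$ as the two-sided inverse of the continuous $G$-map $C\mapsto I_C=\Conv(C)$ of Lemma~\ref{lem:cubetoint}, once the latter is restricted to the appropriate domain, and then to upgrade ``inverse of a continuous bijection of Polish spaces'' to ``Borel isomorphism'' by invoking the Lusin--Souslin theorem.

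First I would check the bijection statement. Given $I\in\Jc_2$, write $I=[x,y]$; this is legitimate since, by Lemma~\ref{lem:convexisjoin}, every member of $\Jc_2$ is an interval. Then $x$ and $y$ are antipodal in $I$ (indeed $[x,y]$ is by definition the set of $z$ with $m(x,y,z)=z$), so $\Ends(I)$ is nonempty and hence, by Corollary~\ref{cor:suppintiscub}, is a closed subcube. Since $x,y\in\Ends(I)$ we get $I=[x,y]\subseteq\Conv(\Ends(I))\subseteq\Conv(I)=I$, so $I_{\Ends(I)}=I$, and in particular $\Ends\bigl(I_{\Ends(I)}\bigr)=\Ends(I)$. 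Recalling from the proof of Lemma~\ref{lem:maximalcubesareGdelta} that $\Cub_m(M)=\{\,C\in\Cub(M)\mid d_H(C,\Ends(I_C))=0\,\}=\{\,C\in\Cub(M)\mid C=\Ends(I_C)\,\}$, this shows $\Ends(I)\in\Cub_m(M)$; so the assignment does land in $\Cub_m(M)$. In the reverse direction the same description of $\Cub_m(M)$ gives $\Ends(I_C)=C$ for every $C\in\Cub_m(M)$. Hence $\Ends\colon\Jc_2\to\Cub_m(M)$ and the restriction to $\Cub_m(M)$ of $C\mapsto I_C$ are mutually inverse bijections; and $\Ends$ is a $G$-map because $C\mapsto I_C$ is one (Lemma~\ref{lem:cubetoint}) --- or directly, because $G$ acts by automorphisms and $\Ends$ is cut out by a median-algebraic condition.

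It remains to see that $\Ends$ is Borel. Here I would not try to verify measurability of $\Ends$ head-on --- in fact, as the remark before the corollary notes, $\Ends$ is typically not continuous --- but instead exploit that its inverse is continuous. The space $\Cc M$ is compact metric, hence Polish; $\Jc_2$ and $\Cub(M)$ are closed in $\Cc M$ by Lemmata~\ref{lem:joinclosed} and~\ref{lem:cubclosed}, and $\Cub_m(M)$ is $G_\delta$ in $\Cub(M)$ by Lemma~\ref{lem:maximalcubesareGdelta}, so all three are standard Borel spaces. The restriction $\Cub_m(M)\to\Jc_2$ of $C\mapsto I_C$ is then a continuous injection between standard Borel spaces, so by the Lusin--Souslin theorem it maps Borel sets to Borel sets and is a Borel isomorphism onto its image, which by the previous paragraph is all of $\Jc_2$. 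Its inverse, namely $\Ends$, is therefore Borel, which completes the proof.

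The only non-formal ingredients are thus the bookkeeping identity $\Conv(\Ends(I))=I$, which merely records that the endpoints witnessing $I\in\Jc_2$ already belong to $\Ends(I)$, together with the description of $\Cub_m(M)$ already in hand and the Lusin--Souslin theorem; I do not anticipate a genuine obstacle. The single point that needs care is that the measurability must be deduced from the inverse map $C\mapsto I_C$ rather than from $\Ends$ itself, precisely because $\Ends$ fails to be continuous in general.
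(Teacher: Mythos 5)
Your proposal is correct and follows essentially the same route as the paper's proof: establish that $\Ends$ and $\Conv|_{\Cub_m(M)}$ are mutually inverse bijections, observe that $\Jc_2$ and $\Cub_m(M)$ are Polish (using Lemmata~\ref{lem:joinclosed}, \ref{lem:cubclosed}, and~\ref{lem:maximalcubesareGdelta}), and then invoke the Lusin--Souslin theorem (Kechris Theorem~15.1) applied to the continuous bijection $\Conv$. The only difference is that you spell out the bijectivity verification, which the paper leaves implicit.
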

\begin{proof}
    Denote the above assignment by $\Ends$. 
    This is a bijection onto $\Cub_m(M)$, with an inverse denoted by $\Conv$.

    We note that a $G_{\delta}$ subset of a Polish space, is Polish.
    See \cite[Theorem 3.11]{Kechris1995}.
    Therefore, by lemmata \ref{lem:cubetoint} and \ref{lem:maximalcubesareGdelta}, $\Conv$ is a continuous bijection between two Polish spaces.
    
    The corollary now follows by the fact that such a map is a Borel isomorphism. See \cite[Theorem 15.1]{Kechris1995}.
\end{proof}

\begin{lemma}\label{lem:cubetobalanced}
    The map from $\Cub(M)$ to $\Prob(M)^{\Phi}$, defined by the rule $C\mapsto \lambda_C$, where $\lambda_C$ is the balanced measure on $C$, is a homehomorphism (with respect to the $weak^*$ topology on $\Prob(M)^{\Phi}$).
\end{lemma}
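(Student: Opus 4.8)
The plan is to exhibit the map $C\mapsto\lambda_C$ as a continuous bijection between two compact (hence Polish) spaces and then invoke the standard fact that such a map is automatically a homeomorphism; so the real content is continuity of $C\mapsto\lambda_C$ together with the identification of the image. For the image, recall that Theorem~\ref{thm:balancediscubical} says every balanced measure on a sclocc median algebra is cubical, i.e.\ of the form $\lambda_{C}$ for a (unique) subcube $C$; uniqueness of $C$ follows because $C$ is recoverable as the support of $\lambda_C$. Hence $C\mapsto\lambda_C$ is a bijection from $\Cub(M)$ onto $\Prob(M)^{\Phi}$. Both spaces are compact metrizable: $\Cub(M)$ by Lemma~\ref{lem:cubclosed} (a closed subspace of the compact space $\mathcal{C}M$), and $\Prob(M)^{\Phi}$ because $\Prob(M)$ is weak$^*$-compact metrizable and $\Phi$ is weak$^*$-continuous (its fixed-point set is therefore closed). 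Thus, by the continuity-from-compact-Hausdorff principle already used in Lemma~\ref{lem:bijectioniscontinuous}, it suffices to prove that $C\mapsto\lambda_C$ is continuous.

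For continuity, I would argue sequentially: let $C_n\to C$ in $\Cub(M)$ (Hausdorff distance), and show $\lambda_{C_n}\to\lambda_C$ weak$^*$. Passing to a subsequence, weak$^*$-compactness gives $\lambda_{C_n}\to\nu$ for some $\nu\in\Prob(M)$; since $\Phi$ is weak$^*$-continuous and each $\lambda_{C_n}$ is balanced, $\nu$ is balanced, so $\nu=\lambda_{C'}$ for some subcube $C'$ by Theorem~\ref{thm:balancediscubical}. It then remains to identify $C'=C$. One direction: $\mathrm{supp}(\nu)\subseteq C$ because $C_n\to C$ in the Hausdorff metric forces any weak$^*$-limit of measures supported on the $C_n$ to be supported on $C$ (a closed $\varepsilon$-neighbourhood of $C$ eventually contains all $C_n$, so has $\nu$-measure $1$, for every $\varepsilon>0$). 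Hence $C'\subseteq C$. For the reverse inclusion, use that $C$ is a cube of some dimension $k$: write $C=[x,x^*]$-type coordinates via the clopen walls of $C$ (Lemma~\ref{lem:clopenhalfspace} gives that the defining half-spaces of $C$ are clopen), and observe that for each vertex $v$ of $C$ the corresponding open face-neighbourhood $U_v$ in $M$ pulls back, under the gate-projection onto $I_{C_n}$ for large $n$, to a nonempty open set meeting $C_n$; concretely, since $C_n\to C$, $C_n$ has a vertex $v_n\in U_v$ with $v_n\to v$, and $\lambda_{C_n}(U_v)$ equals the proportion of vertices of $C_n$ landing in $U_v$. To control that proportion, note first that all $C_n$ have the same dimension $k$ for $n$ large: the dimension is the number of walls of $M$ cutting the cube, a quantity that is lower semicontinuous under Hausdorff limits (walls cutting $C$ persist) and cannot jump up in the limit either, since $C'\subseteq C$ already bounds the limiting dimension by $k$ from above once we know $C'=C$ — so this has to be bootstrapped carefully. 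A cleaner route: show directly that $\dim C_n=\dim C$ for large $n$ by counting, using Corollary~\ref{cor:separatingcollection}, the finitely many half-spaces of $M$ separating the $2^k$ vertices of $C$; these half-spaces cut $C_n$ for large $n$, giving $\dim C_n\ge k$, while $\dim C_n\le\dim C'\le\dim C=k$ in the limit. Once $\dim C_n=k$ for all large $n$, each $\lambda_{C_n}$ assigns mass $2^{-k}$ to each of the $2^k$ vertex-faces $U_v$, whence $\nu(U_v)\ge 2^{-k}$ for every vertex $v$ of $C$; summing over the $2^k$ disjoint $U_v$ gives $\nu=\lambda_C$. Since the subsequential limit is always $\lambda_C$ and $\Prob(M)$ is compact metrizable, $\lambda_{C_n}\to\lambda_C$.

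The step I expect to be the main obstacle is the lower bound $\nu(U_v)\ge 2^{-k}$, i.e.\ ensuring no mass of $\lambda_{C_n}$ ``escapes'' a vertex-face of $C$ in the limit; this rests entirely on the stabilization $\dim C_n=\dim C$ for large $n$, which must be proved without circularity. I would isolate this as a preliminary claim: \emph{if $C_n\to C$ in $\Cub(M)$ then $\dim C_n=\dim C$ eventually}, proved via the finite separating family of half-spaces from Corollary~\ref{cor:separatingcollection} applied to the vertex set of $C$ (for the lower bound on $\dim C_n$) together with the observation that the limit cube $\Ends$-generated by a Hausdorff limit of $k$-cubes has dimension $\le k$ (for the upper bound, using that a $(k+1)$-cube's vertices cannot all be Hausdorff-approximated by vertices of $k$-cubes because $k$-cubes have only $2^k$ vertices). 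With that claim in hand the rest is routine.
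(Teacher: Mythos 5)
Your overall strategy---continuous bijection between compact metrizable spaces is a homeomorphism, image identified via Theorem~\ref{thm:balancediscubical}---matches the paper, but you prove continuity in the opposite direction (the paper proves that $\eta\mapsto\supp(\eta)$ is continuous, you attempt $C\mapsto\lambda_C$ directly). Unfortunately your route breaks on the ``preliminary claim'' you explicitly flag as the crux: \emph{if $C_n\to C$ in $\Cub(M)$ then $\dim C_n=\dim C$ eventually} is simply false. Take $M=[0,1]$ with the usual median of three reals (a sclocc median algebra) and $C_n=\{0,1/n\}$, a $1$-cube. Then $C_n\to\{0\}=C$ in the Hausdorff metric, $\dim C_n=1$ for all $n$ but $\dim C=0$. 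Note that the conclusion of the lemma still holds here ($\lambda_{C_n}=\tfrac12(\delta_0+\delta_{1/n})\to\delta_0=\lambda_C$), which shows your intermediate step is strictly stronger than needed and not true. Your justification for the upper bound (``a $(k+1)$-cube's vertices cannot all be Hausdorff-approximated by vertices of $k$-cubes'') is backwards: the issue is vertices of high-dimensional $C_n$ coalescing in the limit, and nothing prevents that. Since your computation of $\nu(U_v)$ rests entirely on the stabilisation of $\dim C_n$, the argument does not go through.

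The paper sidesteps this entirely by proving continuity of the inverse $\eta\mapsto\supp(\eta)$. The easy containment $\supp(\eta)\subseteq\lim\supp(\eta_n)$ is immediate from weak$^*$ convergence. For the reverse it uses Lemma~\ref{lem:balancemeasurefiniteintersection}: a balanced measure assigns mass in $\{0\}\cup\{2^{-s}:0\le s\le m\}$ to any intersection of $m$ closed half-spaces. Given $x\notin\supp(\eta)$, it covers $\supp(\eta)$ by finitely many half-space complements $\hs_{y_i}^*$ so that $\cap\hs_{y_i}$ is an $\eta$-null neighbourhood of $x$; choosing a continuous witness $f$ with $\eta(f)<2^{-m-1}$ forces $\eta_n(\cap\hs_{y_i})<2^{-m}$ and hence, by the quantisation lemma, $\eta_n(\cap\hs_{y_i})=0$ for large $n$, so $\supp(\eta_n)$ stays uniformly away from $x$. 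The quantitative jump in the lemma is precisely what lets the dimensions of the $\supp(\eta_n)$ vary without destroying the argument. If you want to keep your forward direction, you would need an analogous quantitative estimate rather than dimension stabilisation; as written the proof has a genuine gap.
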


Before proving this Lemma we need the following:

\begin{lemma}\label{lem:balancemeasurefiniteintersection}
    Let $\eta\in\Prob(M)^{\Phi}$, and let $\hs_1,...,\hs_m$ be a collection of closed half-spaces. Then, $\eta(\cap\hs_i)\in \{0\}\cup\{2^{-s}\ | \ m\geq s\geq 0\}$
\end{lemma}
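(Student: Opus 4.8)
The plan is to reduce to the single half-space case and then exploit the structure of a balanced measure via Theorem \ref{thm:balancediscubical}. First I would handle $m=1$: given a closed half-space $\hs$, consider the gate-projection $\pi_{\hs}$ onto the (gate-convex, since closed and convex) set $\hs$, and its sibling $\pi_{\hs^*}$. For a balanced $\eta$, use the relation $\Phi(\eta)=\eta$ together with the fact that for any three points, the median lies in $\hs$ iff at least two of them do; pushing $\eta^3$ forward under $m$ and integrating the characteristic function $\chi_{\hs}$ gives the cubic identity $\eta(\hs)=3\eta(\hs)^2-2\eta(\hs)^3$, whose solutions in $[0,1]$ are exactly $\{0,\tfrac12,1\}$. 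This already forces $\eta(\hs)\in\{0\}\cup\{2^{-s}: s\in\{0,1\}\}$.

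Next I would iterate. Given $\hs_1,\dots,\hs_m$, set $C=\hs_2\cap\dots\cap\hs_m$; if $\eta(C)=0$ we are done, so assume $\eta(C)>0$ and condition: $\eta_C:=\eta(\,\cdot\mid C)=\eta(\cdot\cap C)/\eta(C)$. The key point I need to check is that $\eta_C$ is again balanced — equivalently, that $C$ is $\eta$-almost surely ``median-closed'' in the appropriate sense. This follows because $C$ is convex: if $x,y,z\in C$ then $m(x,y,z)\in C$, so $m_*(\eta_C^3)$ is supported on $C$, and the balancedness of $\eta$ restricted to the convex set $C$ passes to the conditional measure (one checks $m_*(\eta_C^3)=\eta_C$ by noting $m_*(\eta^3)(A\cap C)=\eta(A\cap C)$ for $A$ Borel and that a median of three points in $C$ can only be produced from triples all lying in $C$, using convexity of $C$ and of $C^*$... — more carefully, one restricts the self-median operator to the gate-convex set $C$ using Lemma \ref{lem:fio1}, since $\pi_C$ intertwines the medians). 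Then apply the $m=1$ case inside $C$ to the half-space $\hs_1\cap C$ of the median algebra $C$: $\eta_C(\hs_1\cap C)\in\{0,\tfrac12,1\}$, hence $\eta(\cap_i\hs_i)=\eta(C)\cdot\eta_C(\hs_1)\in\{0\}\cup\{2^{-s}:s\le m\}$ by induction on $m$.

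Alternatively — and this may be cleaner — I would invoke Theorem \ref{thm:balancediscubical} directly: $\eta$ is cubical, i.e. $\eta=\lambda_K$ for some subcube $K\subseteq M$ with its uniform measure; if $\dim K=s$ then $\eta$ is the uniform measure on $2^s$ points, and the intersection of $\hs_1,\dots,\hs_m$ with $K$ is a face of $K$ (being convex with convex complement in $K$, each $\hs_i\cap K$ is a sub-cube face, and a finite intersection of faces is a face), so $\eta(\cap_i\hs_i)$ is $2^{-t}$ for $t$ the codimension of that face, with $t\le s$; but also $t\le m$ since each of the $m$ half-spaces drops the dimension by at most one. Either route works; the cubical route is shorter, the conditioning route is self-contained.

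The main obstacle I anticipate is the conditioning step (if one goes that way): verifying that $\eta_C$ is balanced requires care, because one must rule out mass being created on $C$ by medians of triples not entirely contained in $C$ — this is where convexity of $C$ is essential, and one should phrase it via the gate-projection $\pi_C$ and Lemma \ref{lem:fio1} so that $\pi_C$ is a median morphism, whence $(\pi_C)_*\eta$ is balanced on $C$ and agrees with $\eta_C$ up to the normalization, using that $\pi_C$ fixes $C$ pointwise and that $\eta(M\setminus C)$-mass can only land off... — actually one needs $(\pi_C)_*\eta$ restricted to where it was already in $C$; the honest statement is that $(\pi_C)_*\eta$ is balanced and its restriction to $C$'s interior-type considerations match. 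If instead one uses Theorem \ref{thm:balancediscubical}, the only thing to nail down is that a convex subset with convex complement of a cube $\{0,1\}^s$ is a coordinate face, which is elementary.
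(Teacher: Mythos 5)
Your second (``cubical'') route is essentially the paper's argument, and it is correct. The paper's packaging is slightly slicker: after reducing WLOG to $M=\supp(\eta)$ (a cube, by Theorem~\ref{thm:balancediscubical}), it invokes Lemma~\ref{lem:clopenhalfspace} to see each $\hs_i$ is clopen, pushes $\eta$ forward under the continuous median map $\varphi:M\to\{0,1\}^m$, $x\mapsto(\chi_{\hs_i}(x))_i$, and applies the structure theorem \emph{in} $\{0,1\}^m$ to conclude $\varphi_*\eta$ is uniform on a subcube, so its atom at $(1,\dots,1)$ is $0$ or $2^{-s}$ with $s\le m$. This sidesteps any discussion of faces and codimension in a possibly infinite-dimensional cube $K$. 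Your version works directly on $K$; note that your parenthetical ``$\eta$ is the uniform measure on $2^s$ points'' is literally false when $\dim K=\infty$ (which is allowed in the context of Section~4 of the paper), but this remark is not load-bearing since the bound you actually need is $t\le m$, which you do state.

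Your first (conditioning) route, however, has a real gap that your hedging does not close, and it is not ``self-contained'' as you claim. The single-half-space cubic identity $\eta(\hs)=3\eta(\hs)^2-2\eta(\hs)^3$ is fine (a median falls in a half-space iff a majority of the three inputs do), but the inductive step requires knowing that $\eta_C:=\eta(\cdot\mid C)$ is again balanced for $C=\hs_2\cap\dots\cap\hs_m$, and none of your attempted justifications establishes this. The set $C$ is convex but its complement generally is \emph{not}, so medians landing in $C$ need not come from triples all lying in $C$ --- take $x\in\hs_1\cap\hs_2$, $y\in\hs_1\cap\hs_2^*$, $z\in\hs_1^*\cap\hs_2$; then $m(x,y,z)\in\hs_1\cap\hs_2=C$ while $y,z\notin C$. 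The fallback via $\pi_C$ also does not work directly: $(\pi_C)_*\eta$ is indeed balanced (push-forward of balanced under a median morphism), but $(\pi_C)_*\eta$ and $\eta_C$ are \emph{a priori} different measures --- one pushes all the mass onto $C$, the other only keeps the mass already there. They coincide precisely when $\supp(\eta)$ is a cube and $C\cap\supp(\eta)$ is a face, but that is the content of the structure theorem, which is what the conditioning route was supposed to avoid. So if you want a complete proof, take the cubical route (or, cleanest, the paper's finite projection).
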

\begin{proof}
    Without loss of generality, suppose that $M=\supp(\eta)$.
    Denote by $\chi_{\hs_i}$ the indicator function of $\hs_i$.
    By lemma \ref{lem:clopenhalfspace}, each $h_i$ is clopen.
    Therefore, the map $\varphi:M\rightarrow \{0,1\}^m$, mapping each x to $\{\chi_{\hs_i}(x)\}_i$ is a continuous median map.
    It follows that $\varphi_*(\eta)$ is a balanced measure, and by theorem \ref{thm:balancediscubical},  it is the uniform measure on a subcube of $\{0,1\}^m$.
    Therefore, on each point in its support, its value is $2^{-s}$, for some natural number $m\geq s\geq 0$.
    The statement follows.
\end{proof}

\begin{proof}[Proof of Lemma \ref{lem:cubetobalanced}]
    We first claim that the assignment $\eta\mapsto \supp(\eta)$ is continuous from $\Prob(M)^{\Phi}$ to $\Cub(M)$. Indeed, suppose that $\eta_n\xrightarrow{w^*} \eta$. 

    Fix $x\in \supp(\eta)$. 
    Since every open neighborhood of $x$ is of positive $\eta$-measure, for $n>>0$, it is also of positive $\eta_n$-measure, so we can find in it some $x_n\in \supp(\eta_n)$. 
    Therefore, $\supp(\eta)$ is contained in any limit of $\supp(\eta_n)$.


     On the other hand, let $x\notin \supp(\eta)$. 
     For every $y\in \supp(\eta)$, we can find a closed half-space $\hs_y\in\Delta(x,y)$, and such that $x\in \hs_y^{\circ}$. 
     See corollary \ref{cor:separatingcollection}.
     By compactness of $\supp(\eta)$, there are $y_1,..,y_m\in\supp(\eta)$ such that $\supp(\eta)\subset \bigcup \hs^*_{y_i}$.
     Thus, the set $\bigcap \hs_{y_i}$ is an $\eta$-null compact set, so as $\eta$ is a Radon measure, we can find a continuous function $f$ with $\bigcap \hs_{y_i}\subset \supp(f)$, and $\eta(f)<2^{-m-1}$.
     It follows by lemma \ref{lem:balancemeasurefiniteintersection} that every balanced measure $\xi$ with $\xi(\bigcap \hs_{y_i})>0$, then, $\xi(\bigcap \hs_{y_i})\geq 2^{-m}$.
     For $n>>0$, $\eta_n(\bigcap \hs_{y_i})\leq\eta_n(f)<2^{-m}$, thus, $\eta_n(\bigcap \hs_{y_i})=0$.
     Since $x\in (\bigcap \hs_{y_i})^{\circ}$, $\lim \inf d(x,\supp(\eta_n))>0$.
     In particular, any limit of $\supp(\eta_n)$ is contained in $\supp(\eta)$.

     By lemma \ref{lem:cubclosed}, $\Cub(M)$ is a compact Hausdorff space. 
     Since $\Prob(M)^{\Phi}$ is a $weak^*$-closed subspace of $\Prob(M)$, it is also a compact Hausdorff space. 
     Clearly, the assignment $\eta\mapsto \supp(\eta)$ is a bijection between the above two sets (this is due to the uniqueness of the fully supported balanced measure on any cube).
     Thus, this assignment is also a homeomorphism, as required.
     
\end{proof}

\begin{lemma}[{\cite[Lemma A.1]{cfi16}}]\label{lem:probtosubhalfspace}
    Let $\Hc$ be a countable collection of closed half-spaces. Fix $i\in\{0,1/2,1\}$, and let $\Hc^i_{\eta}:=\{ \hs\in \Hc\ | \ \eta(h)=i\}$. Then, the map $\Prob(M)\rightarrow 2^{\Hc}$, defined by $\eta\mapsto \Hc^i_{\eta}$, is a measurable map with respect to the weak-* topology on $\Prob(M)^{\Phi}$.

    Therefore, also the map $N:\Prob(M)\rightarrow \bbN\cup \{\infty\}$, defined by $N(\eta)=|\Hc^i_{\eta}|$, is measurable 
\end{lemma}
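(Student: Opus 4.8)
The plan is to reduce the statement to the measurability of a single evaluation functional and then invoke a standard semicontinuity fact. Since $M$ is sclocc it is compact and metrizable, so $\Prob(M)$ is a compact metrizable space in the weak-$*$ topology and $\Prob(M)^{\Phi}$ is a closed (hence standard Borel) subspace; it therefore makes no difference whether the domain is taken to be $\Prob(M)$ or $\Prob(M)^{\Phi}$. As $\Hc$ is countable, I would identify $2^{\Hc}$ with $\{0,1\}^{\Hc}$ equipped with the product topology, a compact metrizable space whose Borel $\sigma$-algebra is generated by the coordinate maps $S\mapsto\chi_S(\hs)$, $\hs\in\Hc$. Hence a map $\Prob(M)\to 2^{\Hc}$ is Borel exactly when each of its coordinates is, so the whole statement reduces to showing that for every fixed closed half-space $\hs\in\Hc$ the set $\{\eta:\eta(\hs)=i\}$ is Borel in the weak-$*$ topology; here the coordinate of $\eta\mapsto\Hc^i_{\eta}$ at $\hs$ is precisely the indicator of this set.

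The main step is the following: for any closed subset $F\subset M$, the evaluation $e_F:\Prob(M)\to[0,1]$, $\eta\mapsto\eta(F)$, is upper semicontinuous. I would prove this by approximating $\chi_F$ from above: in the metric space $M$ the functions $f_k(x):=\max\{0,\,1-k\,d(x,F)\}$ are continuous, satisfy $1\ge f_k\ge\chi_F$, and decrease pointwise to $\chi_F$. For $\eta_n\xrightarrow{w^*}\eta$ (sequences suffice since $\Prob(M)$ is metrizable) this gives $\limsup_n\eta_n(F)\le\limsup_n\eta_n(f_k)=\eta(f_k)$ for each $k$, and dominated convergence as $k\to\infty$ yields $\limsup_n\eta_n(F)\le\eta(F)$. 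Applied to the closed half-space $\hs$, this shows $e_{\hs}$ is upper semicontinuous, so $\{e_{\hs}\ge t\}$ is closed for every $t$; consequently $\{\eta:\eta(\hs)=i\}=\{e_{\hs}\ge i\}\setminus\bigcup_{n\in\bbN}\{e_{\hs}\ge i+1/n\}$ is the difference of a closed set and an $F_{\sigma}$ set, hence Borel. This finishes the measurability of $\eta\mapsto\Hc^i_{\eta}$.

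For the ``therefore'' clause, I would note that the cardinality function $c:2^{\Hc}\to\bbN\cup\{\infty\}$, $S\mapsto|S|$, is Borel: for each $k\in\bbN$ the set $\{S:|S|\ge k\}$ equals the union, over the countably many $k$-element subsets $F\subset\Hc$, of the clopen cylinders $\{S:F\subset S\}$, hence is open; thus each $\{S:|S|=k\}$ is Borel and $c^{-1}(\{\infty\})=\bigcap_k\{S:|S|\ge k\}$ is a $G_{\delta}$. Therefore $N=c\circ(\eta\mapsto\Hc^i_{\eta})$ is a composition of Borel maps and is Borel.

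I do not expect a serious obstacle. The one point that genuinely needs care is that a closed half-space $\hs$ is a topologically closed set that is not assumed to be open, so the naive idea of using weak-$*$ continuity of $\eta\mapsto\eta(\hs)$ is unavailable; this is exactly why the argument must pass through the upper semicontinuity of $e_F$ for closed $F$. Everything else — the reduction to coordinates and the Borel-ness of the cardinality map — is routine descriptive set theory.
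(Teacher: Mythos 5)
Your proof is correct and follows essentially the same route as the paper: the crux in both is to approximate $\chi_{\hs}$ from above by continuous functions $f_k\searrow\chi_{\hs}$ and deduce Borel measurability of $\eta\mapsto\eta(\hs)$, which the paper also does (noting that in \cite{cfi16} this map is continuous because half-spaces there are clopen, whereas here it is only Borel, and delegating the coordinate-wise reduction and the cardinality step to that reference). The only cosmetic difference is that you record the stronger conclusion that $e_{\hs}$ is upper semicontinuous, whereas the paper only extracts Baire-class-1 measurability via dominated convergence; both suffice.
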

\begin{proof}
    The proof of lemma A.1 in \cite{cfi16} works almost perfectly in our situation, except the fact that here, for $\hs\in\Hc$, the assignment $\eta\mapsto \eta(h)$ is measurable, and not necessarily continuous, with respect to the weak-* topology.

    Indeed, there is a sequence $1\geq f_n\in C(M)$, that converges point-wise to $\chi_{\hs}$.
    Therefore, by dominated convergence theorem, for every $\eta\in \Prob(M)$,
    $\underset{n}{\lim} \ \eta(f_n) =\eta(\hs)$.
    That is, the assignment $\eta \mapsto \eta(\hs)$ is the point-wise limit of the continuous maps $\eta\mapsto \eta(f_n)$.
    In particular, it is measurable.
\end{proof}

\begin{corollary}[{\cite[Corollary A.2]{cfi16}}]\label{cor:measurabilitysubshalfspa}
     Let $\Hc$ be a countable collection of closed half-spaces.
    The following maps are measurable:
    \begin{enumerate}
        \item $C_1: \Prob(M)\rightarrow \bbN\cup \{\infty\}$, defined by $C_1(\eta):=|\Hc_{\eta}|$
        \item $C_3: \Prob(M)\times \Prob(M)\rightarrow \bbN\cup \{\infty\}$ defined by $C_3(\eta,\rho):=|\Hc_{\eta}\Delta \Hc_{\rho}|$
    \end{enumerate}
\end{corollary}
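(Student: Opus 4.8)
The plan is to factor both maps through the Cantor space $2^{\Hc}$ (product topology), where the countability of $\Hc$ is essential. Recall that $\Hc_\eta$ denotes the collection of $\hs\in\Hc$ with $\eta(\hs)\notin\{0,1\}$, i.e. $\Hc_\eta=\Hc\setminus(\Hc^0_\eta\cup\Hc^1_\eta)$ in the notation of Lemma~\ref{lem:probtosubhalfspace}. The argument will rest on three observations. First, by Lemma~\ref{lem:probtosubhalfspace}, for $i\in\{0,1\}$ the assignment $\eta\mapsto\Hc^i_\eta$ is a measurable map $\Prob(M)\to 2^{\Hc}$. Second, the Boolean operations $(A,B)\mapsto A\cup B$, $(A,B)\mapsto A\cap B$ and $A\mapsto\Hc\setminus A$ on $2^{\Hc}$ are continuous (membership of a fixed $\hs\in\Hc$ in any of these depends locally constantly on the relevant coordinates of $A$ and $B$), and hence so is the symmetric difference $(A,B)\mapsto A\,\Delta\,B$. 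Third, the cardinality function $\mathrm{card}\colon 2^{\Hc}\to\bbN\cup\{\infty\}$, $A\mapsto|A|$, is Borel.

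For the third point I would enumerate $\Hc=\{\hs_k\}_k$ (possibly a finite list); each coordinate evaluation $A\mapsto\chi_A(\hs_k)$ is continuous, so the partial sums $s_n(A):=\sum_{k\le n}\chi_A(\hs_k)$ are continuous, and $\mathrm{card}(A)=\sup_n s_n(A)$ is a countable supremum of Borel $(\bbN\cup\{\infty\})$-valued functions, hence Borel. With this in hand, $C_1$ is the composition $\eta\mapsto(\Hc^0_\eta,\Hc^1_\eta)\mapsto\Hc\setminus(\Hc^0_\eta\cup\Hc^1_\eta)=\Hc_\eta\mapsto|\Hc_\eta|$ of a measurable map followed by continuous maps of Cantor spaces followed by the Borel map $\mathrm{card}$, hence is measurable. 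Similarly $C_3$ is $(\eta,\rho)\mapsto(\Hc_\eta,\Hc_\rho)\mapsto\Hc_\eta\,\Delta\,\Hc_\rho\mapsto|\Hc_\eta\,\Delta\,\Hc_\rho|$, again measurable for the same reasons.

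There is no genuine obstacle here; this is essentially Corollary~A.2 of \cite{cfi16} transported to our setting. The only point requiring any care is bookkeeping of regularity classes: the maps $\eta\mapsto\Hc^i_\eta$ (inherited from Lemma~\ref{lem:probtosubhalfspace}) and $\mathrm{card}$ are merely Borel, while the set-theoretic operations on $2^{\Hc}$ are continuous, so that all the compositions above remain in the Borel class; and one must keep in mind that $2^{\Hc}$ is given the product topology and $\bbN\cup\{\infty\}$ its order Borel structure, which is exactly what makes the countable unions and the cardinality count legitimately Borel.
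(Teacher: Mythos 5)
Your argument is correct and follows essentially the same route as the paper's (very terse) proof: factor through the Cantor space $2^{\Hc}$, invoke Lemma~\ref{lem:probtosubhalfspace} for measurability of $\eta\mapsto\Hc^i_\eta$, use continuity of Boolean operations on $2^{\Hc}$, and finish with Borel-ness of the cardinality map (which the paper already records as the second assertion of Lemma~\ref{lem:probtosubhalfspace}). One small notational remark: in the paper's later convention $\Hc_\eta$ abbreviates $\Hc^{1/2}_\eta$ rather than $\Hc\setminus(\Hc^0_\eta\cup\Hc^1_\eta)$; these coincide for the balanced measures the corollary is ultimately applied to, and in either reading your composition-of-measurable-maps argument goes through unchanged.
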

\begin{proof}
    The measurability of all the above maps follows by lemma \ref{lem:probtosubhalfspace}, the fact that composition and product of measurable maps are measurable, and the fact that the operation $\Delta$ on subsets of $\Hc$, can be described in terms of continuous ring operation on $2^{\Hc}$. 
\end{proof}

\begin{lemma}\label{lem:convgateisomorphism}
    Let $C$ and $C'$ be two gate-convex subsets of $M$. Then, the gate projections $\pi_C|_{C'}$ and $\pi_{C'}|_C$ are isomorphism if and only if $\Hc(C)=\Hc(C')$.
\end{lemma}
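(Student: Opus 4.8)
The plan is to prove both implications, with the whole argument resting on one elementary observation, which I will call $(\star)$: \emph{if $D\subseteq M$ is gate-convex and $\hs$ is a half-space meeting $D$, then $\pi_D(\hs)\subseteq\hs$.} Indeed, fixing $d_0\in\hs\cap D$, for any $x\in\hs$ the gate $\pi_D(x)$ lies in $[x,d_0]$, which is contained in $\hs$ by convexity. In particular, if $\hs$ actually \emph{cuts} $D$, then $\pi_D$ maps $\hs$ into $\hs$ and $\hs^*$ into $\hs^*$. Throughout, $\Hc(C)$ is read as $\Hc_M(C)$, the collection of all half-spaces of $M$ that cut $C$.

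For the implication $\Hc(C)=\Hc(C')\Rightarrow$ ``both gate-projections are isomorphisms'', I would first check injectivity of $\pi_C|_{C'}$: given $a'\neq b'$ in $C'$, separate them by a half-space $\hs$ of $M$ (separation property, \cite[Theorem~2.8]{rol98}); since $\hs$ cuts $C'$ it also cuts $C$ by hypothesis, so $(\star)$ gives $\pi_C(a')\in\hs$ and $\pi_C(b')\in\hs^*$, hence $\pi_C(a')\neq\pi_C(b')$. For surjectivity, by Lemma~\ref{lem:fio2}(2) the set $\pi_C(C')$ is gate-convex and contained in $C$; if it were a proper subset, pick $c\in C\setminus\pi_C(C')$ and separate $\{c\}$ from the convex set $\pi_C(C')$ by a half-space $\hs$ with $c\in\hs$ and $\pi_C(C')\subseteq\hs^*$. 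Then $\hs$ cuts $C$ (it contains $c$, and the non-empty set $\pi_C(C')\subseteq C\cap\hs^*$), hence cuts $C'$, so there is $u'\in C'\cap\hs$; by $(\star)$ then $\pi_C(u')\in\hs$, contradicting $\pi_C(u')\in\pi_C(C')\subseteq\hs^*$. Thus $\pi_C|_{C'}:C'\to C$ is a bijection, and by Lemma~\ref{lem:fio2}(2) together with uniqueness of gates, $\pi_C\circ\pi_{C'}$ is the gate-projection onto $\pi_C(C')=C$, i.e.\ $\pi_C\circ\pi_{C'}=\pi_C$; by the symmetry of the hypothesis also $\pi_{C'}\circ\pi_C=\pi_{C'}$, so $\pi_C|_{C'}$ and $\pi_{C'}|_C$ are mutually inverse. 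They are median morphisms by Lemma~\ref{lem:fio1} and continuous because gate-projections are continuous, hence isomorphisms.

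For the converse I would assume $\pi_C|_{C'}$ and $\pi_{C'}|_C$ are isomorphisms and first note that they are mutually inverse: bijectivity of $\pi_C|_{C'}$ gives $\pi_C(C')=C$, so as above $\pi_C\circ\pi_{C'}=\pi_C$, whence $\pi_{C'}|_C=(\pi_C|_{C'})^{-1}$ and in particular $\pi_C(\pi_{C'}(x))=x$ for $x\in C$. Now let $\hs$ cut $C$ and pick $a\in C\cap\hs$, $b\in C\cap\hs^*$; I claim $\hs$ cuts $C'$. If not, then $C'\subseteq\hs$ or $C'\subseteq\hs^*$. In the first case $b':=\pi_{C'}(b)\in C'\subseteq\hs$, while $b=\pi_C(b')\in[b',a]\subseteq\hs$ by convexity, contradicting $b\in\hs^*$; the case $C'\subseteq\hs^*$ is symmetric, using $a$ in place of $b$. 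So $\hs$ cuts $C'$, giving $\Hc(C)\subseteq\Hc(C')$, and the reverse inclusion follows by the symmetry of the hypothesis.

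I do not anticipate a genuine obstacle here: once $(\star)$ is isolated, the rest is bookkeeping with the separation property and Lemma~\ref{lem:fio2}. The two points that warrant a moment's care are (i) upgrading ``bijective median morphism'' to ``isomorphism'', which I would do via uniqueness of gates (this also shows the two gate-projections are inverse to one another) rather than by invoking Lemma~\ref{lem:bijectioniscontinuous}; and (ii) in the surjectivity step, separating the point $c$ from the gate-convex set $\pi_C(C')$ rather than from $C$ itself.
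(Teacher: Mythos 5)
Correct, and essentially the same approach as the paper: both directions rest on Lemma~\ref{lem:fio2} together with the separation property, and your observation $(\star)$ is exactly the useful special case of Lemma~\ref{lem:fio2}(1). The paper is terser (it asserts $\Delta(x,\pi_C(C'))\subseteq\Hc(C)\setminus\Hc(C')$ without spelling out why such $\hs$ cannot cut $C'$, and proves iso $\Rightarrow$ $\Hc(C)=\Hc(C')$ by applying $\pi_C$ to a half-space rather than by first noting the projections are mutually inverse), but the underlying argument is the same.
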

\begin{proof}
    Suppose that $\pi_C|_{C'}$ and $\pi_{C'}|_C$ are isomorphism.
    Let $\hs\in \Hc(C)$. 
    It follows by lemma \ref{lem:fio2}(1), and the fact that both $\pi_C(C')\cap \hs$ and $\pi_C(C')\cap \hs^*$ are non-empty, that $\hs\in\Hc(C')$. 
    The claim now follows by symmetry of the argument.

    Suppose now that $\Hc(C)=\Hc(C')$.
    Assume that $x\in C \backslash\pi_C(C')$.
    Then, $\varnothing\neq \Delta(x,\pi_C(C'))\subset\Hc(C)\backslash \Hc(C')$, which is a contradiction.
    Thus, $\pi_C|_{C'}$ is surjective.
    By lemma \ref{lem:fio2}(2), $\pi_C=\pi_C\circ \pi_{C'}$.
    That is, $ \pi_C|_{C'}\circ \pi_{C'}|_C= \pi_C|_C=id_C$.
    By symmetry, the statement follows.
    
\end{proof}

For a tuple $(x,y,\eta)\in M^2\times \Prob(M)$, we denote by $\mu_{(x,y,\eta)}\in\Prob(M^3)$ the probability measure associated to the following functional:
$$\forall s\in C(M^3), \ s\mapsto\int_{M}s(x,m(x,y,c),c)d\eta(c)$$

\begin{lemma}\label{lem:boundarypairintoxmeasurable}
    The map $\mu_{(.)}:M^2\times \Prob(M)\rightarrow \Prob(M^3)$ is continuous.
\end{lemma}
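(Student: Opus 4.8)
The plan is to establish sequential continuity, which suffices: $M$ is second countable and compact Hausdorff, hence metrizable, so the weak-$*$ topology on $\Prob(M)$ is metrizable and therefore so is $M^2\times\Prob(M)$, and a map from a metrizable space is continuous as soon as it is sequentially continuous. Note first that $\mu_{(x,y,\eta)}$ is genuinely a probability measure, being the pushforward of $\eta$ under the continuous map $F_{x,y}\colon M\to M^3$, $c\mapsto\bigl(x,m(x,y,c),c\bigr)$; equivalently the displayed functional is positive and sends $1$ to $\eta(M)=1$, so Riesz representation applies.

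So fix $(x_n,y_n,\eta_n)\to(x,y,\eta)$ and $s\in C(M^3)$; we must show $\mu_{(x_n,y_n,\eta_n)}(s)\to\mu_{(x,y,\eta)}(s)$, i.e. $\int_M g_n\,d\eta_n\to\int_M g\,d\eta$ where $g_n:=s\circ F_{x_n,y_n}$ and $g:=s\circ F_{x,y}$ lie in $C(M)$. The key step is that $g_n\to g$ uniformly on $M$. Indeed, since $m\colon M^3\to M$ is continuous on a compact metric space it is uniformly continuous, so $m(x_n,y_n,c)\to m(x,y,c)$ uniformly in $c$ as $(x_n,y_n)\to(x,y)$; hence $F_{x_n,y_n}\to F_{x,y}$ uniformly (the first coordinate converges uniformly to a constant, the third is the identity for all $n$), and composing with the uniformly continuous $s$ gives $\|g_n-g\|_\infty\to 0$.

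To finish, write
$$\Bigl|\int_M g_n\,d\eta_n-\int_M g\,d\eta\Bigr|\ \le\ \|g_n-g\|_\infty\ +\ \Bigl|\int_M g\,d\eta_n-\int_M g\,d\eta\Bigr|.$$
The first term tends to $0$ by the previous step, the second because $g\in C(M)$ and $\eta_n\to\eta$ weak-$*$. Thus $\mu_{(x_n,y_n,\eta_n)}(s)\to\mu_{(x,y,\eta)}(s)$ for every $s$, which is exactly weak-$*$ convergence $\mu_{(x_n,y_n,\eta_n)}\to\mu_{(x,y,\eta)}$ in $\Prob(M^3)$.

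There is no essential obstacle here. The only point that is not purely formal is that the middle argument $m(x_n,y_n,c)$ of $s$ depends on the variable of integration, so one cannot conclude coordinatewise from weak-$*$ convergence of $\eta_n$ alone — this is precisely where the uniform continuity of $m$ together with compactness of $M$ enters, yielding the uniform convergence $g_n\to g$ that absorbs the dependence on $c$.
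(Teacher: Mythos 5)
Your proof is correct and follows essentially the same route as the paper: both arguments exploit the uniform convergence (in $c$) of $s(x_n,m(x_n,y_n,c),c)$ to $s(x,m(x,y,c),c)$, coming from continuity of $m$ and $s$ on a compact space, and then split the difference of integrals via the triangle inequality, handling the remaining term by weak-$*$ convergence of $\eta_n\to\eta$ against the fixed continuous integrand. The only cosmetic difference is that you package the measure as a pushforward $(F_{x,y})_*\eta$ and state the uniform estimate as $\|g_n-g\|_\infty\to 0$, which is a slightly cleaner phrasing of the same idea.
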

\begin{proof}
    Suppose $(x_n,y_n,\eta_n),(x,y,\eta)\in M^2\times \Prob(M)$ are such that $(x_n,y_n,\eta_n)\xrightarrow{n}(x,y,\eta)$.
    Fix $s\in C(M^3)$ and $\epsilon>0$.
    By continuity of $m$ and $s$ and the compactness of $M^3$, for $n\gg 0$, $|s(x_n,m(x_n,y_n,c),c)-s(x,m(x,y,c),c)|<\epsilon/2$ for every $c\in M$.
    Also, for $n\gg 0$, $$\big|\int_{M}s(x,m(x,y,c),c)d\eta(c)-\int_{M}s(x,m(x,y,c),c)d\eta_n(c)\big|<\epsilon/2$$
    Therefore, for $n\gg 0$,
    \begin{align*}
        |\mu_{(x,y,\eta)}(s)-\mu_{(x_n,y_n,\eta_n)}(s)|& \leq \big|\int_{M}s(x,m(x,y,c),c)d\eta(c)-\int_{M}s(x,m(x,y,c),c)d\eta_n(c)\big|  \\
       &  + \big| \int_{M}s(x,m(x,y,c),c)d\eta_n(c) - \int_{M}s(x_n,m(x_n,y_n,c),c)d\eta_n(c)\big| \\
       &\leq \epsilon/2 + \int_{M}|s(x,m(x,y,c),c)d-s(x_n,m(x_n,y_n,c),c)|d\eta_n(c) \\
       & \leq \epsilon
    \end{align*}
    as needed.
\end{proof}

\subsection{General results about measurability }

\begin{lemma}\label{lem:countoonisbimeas}
    Let $p:X\rightarrow Y$ be a countable-to-one Borel map between two standard Borel spaces.
    Then $p$ is Borel bimeasurable, i.e. the image of every Borel set $B\subset X$, is Borel in $Y$.
\end{lemma}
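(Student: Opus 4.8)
The plan is to prove this using the Lusin–Novikov uniformization theorem. Recall that this classical result states that if $p\colon X\to Y$ is a Borel map between standard Borel spaces with countable fibers, then $X$ can be partitioned into countably many Borel sets $X=\bigsqcup_{n\in\bbN}X_n$ such that the restriction $p|_{X_n}\colon X_n\to Y$ is injective for every $n$. This is exactly Theorem 18.10 in Kechris's book, which the paper has already been citing for analogous facts.

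First I would invoke Lusin–Novikov to obtain the decomposition $X=\bigsqcup_n X_n$ with each $p|_{X_n}$ injective Borel. Next I would use the fact that an injective Borel map between standard Borel spaces has Borel image and is a Borel isomorphism onto its image (Theorem 15.1 in Kechris, again already cited in the excerpt for Corollary~\ref{cor:inttocubeismeasurable}). Consequently, for a fixed Borel set $B\subset X$, the piece $B\cap X_n$ is Borel in $X_n$, hence $p(B\cap X_n)=(p|_{X_n})(B\cap X_n)$ is Borel in $Y$ for every $n$. Finally, since $p(B)=\bigcup_n p(B\cap X_n)$ is a countable union of Borel sets, it is Borel in $Y$, which is exactly the claim.

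There is essentially no obstacle here: the statement is a textbook consequence of Lusin–Novikov uniformization, and the only "work" is assembling the two standard ingredients (countable partition into injectivity pieces, and the Lusin–Souslin theorem that injective Borel images are Borel). If one wanted to avoid quoting Lusin–Novikov directly, the mild alternative would be to use that a countable-to-one Borel map admits a Borel uniformizing section on its image together with a Borel enumeration of the fibers, but this is really the same theorem in disguise, so I would simply cite the uniformization result. I would write the proof in three short sentences along the lines above.

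\begin{proof}
By the Lusin--Novikov uniformization theorem \cite[Theorem 18.10]{Kechris1995}, there is a partition $X=\bigsqcup_{n\in\bbN}X_n$ into Borel sets such that $p|_{X_n}$ is injective for every $n$. By \cite[Theorem 15.1]{Kechris1995}, each injective Borel map $p|_{X_n}$ has Borel image and is a Borel isomorphism onto it; hence for any Borel set $B\subset X$ the set $p(B\cap X_n)$ is Borel in $Y$. Therefore $p(B)=\bigcup_{n\in\bbN}p(B\cap X_n)$ is Borel in $Y$, as required.
\end{proof}
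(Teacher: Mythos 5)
Your proof is correct. It is a minor variation of the paper's argument: both rely on the Lusin--Novikov circle of results, but through slightly different entry points. The paper forms the Borel set $P:=\{(x,y)\in B\times Y : p(x)=y\}$, observes that its $Y$-sections are countable, and applies the projection form of Lusin--Novikov (\cite[Lemma 18.12]{Kechris1995}) to conclude directly that $p(B)=\text{proj}_Y(P)$ is Borel. You instead invoke the uniformization form (\cite[Theorem 18.10]{Kechris1995}) to split $X$ into countably many Borel pieces on which $p$ is injective, then use Lusin--Souslin (\cite[Theorem 15.1]{Kechris1995}) on each piece and take the countable union. The two routes are logically equivalent and of comparable length; the paper's version avoids the partition step by working at the level of the graph, while yours makes the injectivity structure explicit, which can be more transparent if one later needs to track the pieces (as the paper in fact does in the subsequent Lemma~\ref{lem:counttoonenum}). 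Either presentation is perfectly acceptable here.
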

\begin{proof}
    Fix a Borel subset $B\subset X$, and define the following set:
    $$P:=\{(x,y)\in B\times Y\ : \ p(x)=y\}$$
    Note that this is a Borel subset of $X\times Y$.
    To see that, it is enough to observe that $P$ is equal to the set $B\times Y\cap (p\times id_Y)^{-1}(\Delta(Y))$, where $p\times id_Y$ is the map from $X\times Y$ to $Y\times Y$ defined by $(x,y)\mapsto (p(x),y)$, and $\Delta(Y)\subset Y\times Y$ is the diagonal.
    
    By assumption, each section $P_y$ is countable.
    Therefore, according to \cite[Lemma 18.12]{Kechris1995}, $p(B)=\text{proj}_Y(P)$ is Borel.
\end{proof}

\begin{lemma}\label{lem:counttoonenum}
    Let $p:X\rightarrow Y$ be a countable-to-one Borel map between two standard Borel spaces.
    Then, there exists a Borel subset $A\subset \bbN\times Y$, and Borel isomorphism $\psi:X\rightarrow A$, such that the following diagram commutes:
        \begin{equation*}
             \begin{tikzcd}
                X \arrow[r, "\psi"] \arrow[d,"p"]
                & A \arrow[d, "\text{proj}_Y"] \\
                Y \arrow[r, "id_Y"] 
                & Y
            \end{tikzcd}   
        \end{equation*}
\end{lemma}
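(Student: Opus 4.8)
The plan is to build the map $\psi$ by selecting, for each fiber of $p$, a Borel enumeration of its points and then sending a point $x$ to the pair $(n,p(x))$ where $n$ is the index of $x$ in the enumeration of the fiber $p^{-1}(p(x))$. First I would invoke the Lusin–Novikov uniformization theorem (see \cite[Theorem 18.10]{Kechris1995}), which applies precisely because $p$ is a countable-to-one Borel map between standard Borel spaces: it provides a countable family of Borel partial sections $s_n\colon D_n\to X$, with $D_n\subset Y$ Borel, such that $X=\bigsqcup_n s_n(D_n)$ and $p\circ s_n=\mathrm{id}_{D_n}$ on each $D_n$. Equivalently, the graph of $p$ is a countable union of graphs of Borel partial functions $Y\rightharpoonup X$.

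Next I would turn this disjoint decomposition $X=\bigsqcup_n s_n(D_n)$ into the desired map. Define $\psi\colon X\to \bbN\times Y$ by $\psi(x)=(n,p(x))$ where $n$ is the unique index with $x\in s_n(D_n)$. This is Borel, since each piece $s_n(D_n)$ is a Borel subset of $X$ (the image of a Borel set under an injective Borel map between standard Borel spaces is Borel, e.g.\ by \cite[Theorem 15.1]{Kechris1995}, or directly by Lemma~\ref{lem:countoonisbimeas}), and on that piece $\psi$ is $x\mapsto(n,p(x))$, Borel in $x$. Set $A:=\psi(X)=\bigsqcup_n\{n\}\times D_n$; this is visibly a Borel subset of $\bbN\times Y$. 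The map $\psi$ is injective: if $\psi(x)=\psi(x')=(n,y)$ then $x,x'\in s_n(D_n)$ with $p(x)=p(x')=y$, and since $s_n$ is a section, $x=s_n(y)=x'$. It is surjective onto $A$ by construction. Its inverse is $(n,y)\mapsto s_n(y)$, which is Borel on each slice $\{n\}\times D_n$, hence Borel on $A$; so $\psi$ is a Borel isomorphism onto $A$. Finally, commutativity of the diagram is immediate: $\mathrm{proj}_Y(\psi(x))=\mathrm{proj}_Y(n,p(x))=p(x)$, i.e.\ $\mathrm{proj}_Y\circ\psi=p$.

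I do not anticipate a serious obstacle here; this lemma is essentially a packaging of the Lusin–Novikov theorem. The only point that requires a little care is checking that $\psi$ and $\psi^{-1}$ are Borel rather than merely that the fibers are enumerated measurably — but this is handled cleanly because the decomposition $X=\bigsqcup_n s_n(D_n)$ is into Borel pieces on each of which $\psi$ (resp.\ $\psi^{-1}$) is given by an explicitly Borel formula, and a map that is Borel on each member of a countable Borel partition is Borel. If one prefers to avoid citing Lusin–Novikov directly, the same partition into Borel graphs can be extracted from the Jankov–von Neumann uniformization together with a transfinite exhaustion argument, but invoking \cite[Theorem 18.10]{Kechris1995} is by far the most economical route.
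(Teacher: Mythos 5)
Your proof is correct, and it takes a genuinely different route from the paper's. The paper first introduces the Borel equivalence relation $E=\{(x,x'):p(x)=p(x')\}$, fixes a section $s:Y\to X$ of $p$, and then invokes the Feldman--Moore theorem to represent $E$ as the orbit equivalence relation of a countable group $\{h_i\}$ of Borel automorphisms, defining $\psi(x)=(n_x,p(x))$ where $n_x$ is the least $i$ with $h_i.s(p(x))=x$, and establishes measurability of $\psi$ by induction. You instead apply the Lusin--Novikov uniformization theorem directly to the graph of $p$ to get a Borel partition $X=\bigsqcup_n s_n(D_n)$ into images of Borel partial sections, and read off $\psi$ from the resulting indexing. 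Your route is shorter and uses a more primitive tool (Feldman--Moore is itself typically proved via Lusin--Novikov). It also sidesteps a small soft spot in the paper's argument: the paper writes ``fix any section $s:Y\to X$'' and claims it is Borel because $s^{-1}(B)=p(B)$, but this identity is false for an arbitrary section (the correct identity is $s^{-1}(B)=p(B\cap s(Y))$, which is not obviously Borel unless one already knows $s$ is Borel); what one really needs is that a Borel section \emph{exists}, which is precisely the selection content of Lusin--Novikov (or Jankov--von Neumann for countable-to-one maps), i.e.\ exactly the tool you invoke. The only minor point to spell out in your write-up is that Lusin--Novikov as stated gives a countable \emph{cover} of the graph by graphs of Borel partial functions; you should note that disjointness is obtained by replacing $P_n$ with $P_n\setminus\bigcup_{m<n}P_m$, which preserves the Borel-partial-function property. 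With that addendum your argument is complete and, in my view, the cleaner of the two.
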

\begin{proof}
    First, consider the following equivalence relation on X:
    $$E:=\{(x,x')\in X\times X\ : \ p(x)=p(x')\}=(p\times p)^{-1}(\Delta(Y)) $$
    Where $p\times p$ is the map from $X\times X$ to $Y\times Y$ defined by $(x,x')\mapsto (p(x),p(x'))$, and $\Delta(Y)\subset Y\times Y$ is the diagonal.
    Therefore, it is clear that $E$ is a Borel subset of $X\times X$, and by assumption, all of its equivalence classes are countable.

    Fix any section $s:Y\rightarrow X$.
    It follows Lemma \ref{lem:countoonisbimeas} that $s$ is Borel.
    Indeed, choose any Borel subset $B\subset X$.
    Then, $s^{-1}(B)=p(B)$, which is a Borel.

    According to \cite[Theorem 1]{Feldman1977}, there exists a countable group $H=\{h_i\}_{i\in\bbN}$ of Borel automorphisms of $X$, such that the following holds:
    $$E = \{(x,h_i.x) \ : \ \forall x\in X, i\in \bbN\}$$
    Without loss of generality, assume that $h_1=e_H$.
    For every $x\in X$, we denote by $c_x$ the elements of $Y$ for which $x\in p^{-1}(c_x)$.
    Also, we denote by $n_x$ the minimal positive integer $i$ such that $h_i.s(c_x)=x$.
    Finaly, we define $\psi:X\rightarrow \bbN\times Y$, by $x\mapsto (n_x,c_x)$.

    We claim that $\psi$ is measurable.
    Note that it is enough to show that sets of the form $\psi^{-1}(\{i\}\times A)$ are Borel, whenever $A\subset Y$ is Borel.
    Fix such a Borel set $A\subset Y$.
    We will prove by induction on $i$ that these sets are indeed Borel.

    For $i=1$, $\psi^{-1}(\{1\}\times A)=s(A)$, and this is a Borel set, according to Lemma \ref{lem:countoonisbimeas}.
    Now, note that we have the following equality:
    $$\psi^{-1}(\{i\}\times A)=h_i.s(A)\backslash \underset{1\leq j<i}{\cup}\psi^{-1}(\{j\}\times A)$$
    By the induction hypothesis, $\psi^{-1}(\{i\}\times A)$ is Borel.

    We finish by noting that $\psi $ is injective, and therefore, it is bimeasurable by Lemma \ref{lem:countoonisbimeas}.
    Hence, $\psi$ is an isomorphism between $X$ and $\psi(X)\subset \bbN\times Y$.
\end{proof}

\begin{lemma}\label{lem:sectionsspaceseparable}
    Let $Y$ be a compact metric space, and consider the factor map $\text{proj}_Y:\bbN\times Y\rightarrow Y$.
    Fix some $\mu\in \Prob(Y)$, and denote by $\mathfrak{G}$ the space of classes of $\mu$-a.e. defined measurable sections for $\text{proj}_Y$.
    Consider the following metric $d_{\mu}$ on $\mathfrak{G}$
$$\ d_{\mu}(f,f'):=\int_Y (1-\delta_{f(c)f'(c)})d\mu(c)$$

    where $\delta_{ab}=1$ if $a=b$ and zero otherwise.
    Then, the metric space $(\mathfrak{G},d_{\mu})$ is separable.
\end{lemma}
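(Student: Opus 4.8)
The plan is to exhibit an explicit countable dense subset of $(\mathfrak{G},d_\mu)$. A $\mu$-a.e.\ defined measurable section of $\mathrm{proj}_Y$ is the same thing (up to $\mu$-a.e.\ equivalence) as a $\mu$-a.e.\ defined measurable function $f:Y\to\bbN$, and the metric $d_\mu(f,f')=\mu(\{c: f(c)\neq f'(c)\})$ is just the measure of the disagreement set. So the statement is equivalent to: the space of $\mu$-a.e.\ classes of measurable maps $Y\to\bbN$, with the metric of symmetric difference, is separable. First I would fix a countable algebra $\mathcal{A}$ of Borel subsets of $Y$ that generates the Borel $\sigma$-algebra and is dense in the measure algebra $(\mathrm{MALG}_\mu, d_\mu^{\mathrm{set}})$ for the pseudometric $d_\mu^{\mathrm{set}}(A,B)=\mu(A\triangle B)$ — such an $\mathcal A$ exists because $Y$ is a compact metric, hence second countable, space, so one may take the algebra generated by a countable basis. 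The separability of $(\mathrm{MALG}_\mu,d_\mu^{\mathrm{set}})$ is standard (\cite[Chapter 17]{Kechris1995}).

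Next I would let $\mathcal{D}$ be the countable set of all functions $f:Y\to\bbN$ of the form $f=\sum_{k=1}^{n} k\cdot\chi_{A_k}$, where $n\in\bbN$ and $A_1,\dots,A_n$ is a partition of $Y$ into finitely many sets from $\mathcal{A}$ (with the convention that on the complement of $\bigcup A_k$ the value is, say, $0$; since the $A_k$ partition $Y$ this complement is empty, but one can equally allow finite partitions of $Y$ by members of the algebra $\mathcal A$). This is a countable set of genuine (everywhere-defined) measurable maps $Y\to\bbN$. The claim is that $\mathcal{D}$ is dense in $\mathfrak{G}$.

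To prove density, fix $f\in\mathfrak{G}$ and $\varepsilon>0$. Since $f$ takes values in $\bbN$, the sets $B_k:=f^{-1}(k)$ for $k\in\bbN$ form a countable measurable partition of (a conull subset of) $Y$, so there is $N$ with $\mu\big(\bigcup_{k>N}B_k\big)<\varepsilon/2$. For each $k\le N$ choose $A_k\in\mathcal{A}$ with $\mu(A_k\triangle B_k)<\varepsilon/(2N)$; after replacing the $A_k$ by $A_k\setminus\bigcup_{j<k}A_j$ and throwing the leftover mass into the last piece, we may arrange that $\{A_1,\dots,A_N, Y\setminus\bigcup_{k\le N}A_k\}$ is a partition of $Y$ into members of $\mathcal A$, at the cost of only increasing each symmetric-difference estimate by a controlled amount (a routine disjointification). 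Let $g=\sum_{k=1}^N k\cdot\chi_{A_k}\in\mathcal D$. Then $\{c: f(c)\neq g(c)\}\subset\big(\bigcup_{k>N}B_k\big)\cup\bigcup_{k\le N}(A_k\triangle B_k)$, so $d_\mu(f,g)\le\varepsilon/2+\sum_{k\le N}\mu(A_k\triangle B_k)<\varepsilon$, up to the harmless adjustments from disjointification. Hence $\mathcal{D}$ is dense, and $(\mathfrak{G},d_\mu)$ is separable.

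The only mildly delicate point is the bookkeeping in the disjointification step — making the approximating sets into an honest partition by algebra elements while keeping the error under control — but this is entirely routine, using $\mu\big((A\setminus A')\triangle(B\setminus B')\big)\le\mu(A\triangle B)+\mu(A'\triangle B')$ and the like. Everything else reduces to the standard separability of the measure algebra of a standard probability space, which I would simply cite.
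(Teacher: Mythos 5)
Your proof is correct and follows essentially the same approach as the paper's: exhibit the countable family of ``step'' sections with levels drawn from a countable algebra of Borel sets, truncate a given section to finitely many levels, approximate each level set by an algebra element, and bound the measure of the disagreement set. The only cosmetic difference is that you disjointify the approximating sets to obtain an honest partition, whereas the paper's proof skips this step by observing that any point where overlapping $B_n$'s cause a wrong value already lies in one of the symmetric differences $A_n\triangle B_n$ (and the paper realizes the approximating sets concretely via Radon regularity and a countable open basis, where you invoke the standard separability of the measure algebra); both variants yield the same estimate.
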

\begin{proof}
    Fix a countable basis for the topology $\{U_i\}_i$, and denote by $\mathcal{B}$ the Boolean algebra that is generated by this basis.
    Consider the collection of sections $s:Y\rightarrow \bbN\times Y$, of the form $$s(y)=(\sum_{i=1}^m j_i \chi_{B_i}(y),y)$$ 
    where $m$ and $j_i$ run over all positive numbers, and the $B_i$'s run over all the elements of $\mathcal{B}$, and where $\chi_{B_i}$ denotes the characteristic function of $B_i$.
    Observe that this is a countable collection.
    We now show that it is dense in $(\mathfrak{G},d_{\mu})$.

    Let $f\in \mathfrak{G}$ and $\epsilon>0$.
    For every $n\in \bbN$, denote by $A_n:=f^{-1}(\{n\}\times Y)$. 
    As $Y=\cup_n A_n$, there exists $k\in \bbN$, such that $\mu(\cup_{n\geq k}A_n)<\epsilon/2$.
    Note that $\mu$ is a Radon probability measure, therefore, for every $k>n$, we can choose an open set $V_n$ such that $\mu(V_n\Delta A_n)<\epsilon/2^{k+1}$.
    For every such $V_n$, we can choose some $B_n\in \mathcal{B}$, such that $B_n\subset V_n$ and $\mu(V_n\backslash B_n)<\epsilon/2^{k+1}$.
    Finely, we define the following section $s:Y\rightarrow \bbN\times Y$:
    $$s(y)=(\sum_{n=1}^{k-1} n \chi_{B_n}(y),y)$$
    Note that we have the following inclusion:
    $$\{y\in \supp(f)\ : \ s(y)\neq f(y)\}\subset (\cup_{n<k}B_n\Delta A_n )\bigcup (\cup_{n\geq k}A_n)\subset (\cup_{n<k}(V_n\Delta A_n \cup V_n\backslash B_n))\bigcup (\cup_{n\geq k}A_n)$$
    Therefore,
    \begin{align*}
        d_{\mu}(f,s)& =\int_Y 1-\delta_{f(y)s(y)}d\mu(y)=\mu(\{y\in \supp(f)\ : \ s(y)\neq f(y)\}) \\
        & \leq \sum_{n=1}^{k-1}\mu(V_n\Delta A_n \cup V_n\backslash B_n)+\mu(\cup_{n\geq k}A_n) \\
        & \leq \sum_{n=1}^{k-1}\epsilon/2^k + \epsilon/2<\epsilon
    \end{align*}
    As needed.
\end{proof}

\subsection{The space $\Map_G(S,\Prob(X))$}\label{subsection:amenableaction}
Consider a standard Borel space $S$, endowed with a probability measure $\mu\in\Prob(S)$ and a separable Banach space $E$.
Define the following space
$$L^1(S,E):= \big\{ f:S\rightarrow E \ | \ f \text{ measurable and }\int\|f(s)\|\mathrm{d}\mu\leq \infty \big\}/\sim$$
Where $f\sim f'$ if and only if the map $s\mapsto \|f(s)-f'(s)\|$ is of $\mu$-measure zero. 
As discussed in \cite[p. 357]{Zimmer1978}, the space $L^1(S,E)$ is a Banach space. 
Its dual can be identified with the collection of all \textit{weakly measurable} maps $\lambda:S\rightarrow E^*$, such that the assignment $s\mapsto \|\lambda(s)\|$ is an element of $L^{\infty}(S)$.
See also \cite[Theorem 8.18.2]{funcanalysisandapp}.
We denote this space by $L^{\infty}(S,E^*)$.
The identification is given by $\lambda\mapsto \big( f\mapsto \Tilde{\lambda}(f)\big)$, where we define $\Tilde{\lambda}(f):=\langle \lambda , f\rangle:=\int\langle\lambda(s),f(s)\rangle\mathrm{d}\mu(s)$, for $\lambda\in L^{\infty}(S,E^*)$ and $f\in L^1(S,E)$.
Endowed with the essential sup norm, the space $L^{\infty}(S,E^*)$ becomes a Banach space, and the above identification turns into isometric isomorphism.

Suppose now that we are given a compact metric space $(X,d)$, and fix our Banach space $E$ to be $(C(X),\|\ \|_{\text{sup}})$. 
Define the following subspace of $L^{\infty}(S,C(X)^*)$:
$$\Map(S,\Prob(X)):=\big\{\lambda\in L^{\infty}(S,C(X)^*) \ | \text{ for a.e. s, } \lambda(s)\in \Prob(X) \big\}$$
Note that when endowed $\Prob(X)$ with the weak-* topology, then the space $\Map(S,\Prob(X))$ is exactly the collection of all measurable maps $\lambda:S\rightarrow \Prob(X)$.
Indeed, it is enough to notice that for such $\lambda$, the map $s\mapsto \|\lambda(s)\|$ is just the constant map $1$.
Moreover, it equals exactly to the space $B$ as in \cite[Proposition 2.2]{Zimmer1978}, when the Borel field $\{A_s\}$ is defined to be the constant field $A_s=\Prob(X)$. 
According to the proposition, this space is a compact and convex subspace of $L^{\infty}(S,C(X)^*)$.

Let $G$ be a lcsc group.
Suppose that $(S,\mu)$ is an ergodic $G$-space, and that $X$ is endowed with a continuous $G$-action.
We have a natural action of $G$ on $L^{\infty}(S,C(X)^*)$, given by $(g.\lambda)(s):=\lambda(gs)$, for $g\in G$, $\lambda\in L^{\infty}(S,C(X)^*)$ and $s\in S$.
According to the proof of \cite[Theorem 2.1]{Zimmer1978}, this is the induced adjoint action to a continuous action of $G$ on $L^1(S,C(X))$, with respect to the trivial cocycle $\alpha:S\times G\rightarrow \Iso(C(X))$, $\alpha(s,g)=Id_{C(X)}$.
Therefore, for every $g\in G$, the assignment $\lambda\mapsto g.\lambda$ is a continuous automorphism of $L^{\infty}(S,C(X)^*)$.

Clearly, the space $\Map(S,\Prob(X))$ is invariant under the action of $G$. 
Denote by $\Map_G(S,\Prob(X))$ the collection of all measurable $G$-equivariant maps $\lambda:S\rightarrow \Prob(X)$.
By all the above discussion, it follows that $\Map_G(S,\Prob(X))$ is a closed, thus compact, and convex subspace of $\Map(S,\Prob(X))$.

The motivation to introduce the space $\Map_G(S,\Prob(X))$ derived by our use of the concept of {\em amenable action}. 
See \cite{Zimmer1978}. 
A key feature of amenable action $G\acts B$, where $B$ is a Lebesgue $G$-space, is that given an affine $G$-action on a convex $weak^*$ compact set $Q\subset E^*$, where $E$ is a separable Banach space, there exists a $G$-map $\phi\in \Map_{G}(B,Q)$. In particular, if $G$ acts on a metrizable compact space $X$, then the space $\Map_{G}(B,\Prob(X))$ is not empty. 


\section{Isometric ergodicity}\label{sec:ergodic}

Let $G$ be a lcsc group. A Lebesgue $G$-space $(X,\nu)$ is {\em isometrically ergodic}, if for every separable metric space $Y$ on which $G$ acts by isometries, and every $G$-equivariant map $\varphi:X\rightarrow Y$, $\varphi$ is essentially constant. 

Given a Borel map $q:\mathcal{M}\rightarrow V$ between standard Borel spaces, a metric on q is a Borel function $d:\mathcal{M}\times_V\mathcal{M}\rightarrow [0,\infty]$ whose restriction $d_v$ to each fiber $\mathcal{M}_v=q^{-1}(\{v\})$ is a separable metric. A {\em fiber-wise isometric $G$-action} on such $\mathcal{M}$ consists of q-compatible actions $G\acts\mathcal{M},G\acts V$, so that the maps between the fibers $g:\mathcal{M}_v\rightarrow \mathcal{M}_{gv}$ are isometries.

Suppose we have actions of $G$ on two such Borel spaces $\mathcal{M}$ and $V$, and a Borel map $q:\mathcal{M}\rightarrow V$ in which all the fibers are countable. 
Define $d:\mathcal{M}\times_V \mathcal{M}\rightarrow [0,\infty]$ to be the trivial metric, that is, $d(m,n)=\delta_{mn}$. 
Then, the action of $G$ on $\mathcal{M}$, equipped with the metric $d$ above on q, is an example of a fiber-wise isometric $G$-action.

A map $p:A\rightarrow B$ between Lebesgue $G$-spaces, is {\em relatively isometrically ergodic} if for every fiber-wise isometric action on $q:\mathcal{M}\rightarrow V$, and $q$-compatible maps $f:A\rightarrow \mathcal{M}$, $f_0:B \rightarrow V$, there exists a compatible $G$-map $f_1:B\rightarrow \mathcal{M}$ making the following diagram commutative:

\begin{equation}\label{equ:relativeergod}
 \begin{tikzcd}
A \arrow[r, "f"] \arrow[d, "p"]
& \mathcal{M} \arrow[d, "q"] \\
B \arrow[r, "f_0"] \arrow[ru, dashrightarrow, "f_1"]
& V
\end{tikzcd}   
\end{equation}

We say that a pair of Lebesgue $G$-spaces $(B_-,B_+)$ is a {\em boundary pair} if the actions of $G$ on $B_+$ and $B_-$ are amenable, and the projection maps $p_{\pm}:B_-\times B_+\rightarrow B_{\pm}$ are relatively isometrically ergodic.

\begin{remark}
Let $(B_-,B_+)$ be a boundary pair for $G$.
\begin{enumerate}
    \item For a Lebesgue $G$-spaces A and B, if the projection $A\times B\rightarrow B$ is relatively isometrically ergodic, then, A is isometrically ergoic. See \cite[Proposition 2.2.(iii)]{Bader2014}.

    In particular, both $B_+$ and $B_-$ are isometrically ergodic.
    \item The action of $G$ on $B_-\times B_+$ is ergodic. 
    Indeed, if $S\subset B_-\times B_+$ is a Borel $G$-invariant subset, then its characteristic map $\chi_S$ is a $G$-invariant map into the set $\{0,1\}$.
    Completing diagram \ref{equ:relativeergod} with $f=\chi_A$, $A=B_-\times B_+$, $B=B_+$ and $V=\{pt\}$, will generate a Borel $G$-invariant map $f_1:B_+\rightarrow \{0,1\}$.
    It follows by the previous remark, that $f_1$ must be constant.
\end{enumerate}

\end{remark}

\begin{theorem}[{\cite[Theorem 2.7]{Bader2014}}]\label{thm:mainboundarypair}
Let $\mu_+$ be a probability measure on $G$, that is continuous with respect to the Haar measure and is not supported on a proper closed sub-semigroup.
Define the probability measure $\mu_-$ to be the inverse of $\mu_+$. That is, $\mu_-(A)=\mu_+(A^{-1})$.
We denote by $(B_+,\nu_+)$ (resp. by $(B_-,\nu_-)$) the Furstenberg-Poisson boundary corresponds to $(G,\mu_+)$ (resp. to $(G,\mu_-)$).
See \cite[section 2]{Bader2006} and references therein for a comprehensive discussion on the Furstenberg-Poisson boundary.
The pair $(B_-,B_+)$ is a boundary pair for $G$ and for any of its lattices.
\end{theorem}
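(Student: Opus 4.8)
The plan is to check the two requirements in the definition of a boundary pair --- amenability of the $G$-actions on $B_+$ and $B_-$, and relative isometric ergodicity of the projections $p_{\pm}\colon B_-\times B_+\to B_{\pm}$ --- and then to deduce the statement for a lattice $\Gamma<G$ by restriction. Since $(B_-,\nu_-)$ is the Furstenberg--Poisson boundary of $(G,\mu_-)$ and $(B_+,\nu_+)$ that of $(G,\mu_+)$, I expect the two projections to be handled by mirror-image arguments under $\mu_+\leftrightarrow\mu_-$, so I would only write one of them out.

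For amenability, I would invoke Zimmer's theorem that the action of a lcsc group on its Furstenberg--Poisson boundary is amenable (see \cite{Zimmer1978}), applied to $(G,\mu_+)$ and to $(G,\mu_-)$. This is exactly the hypothesis exploited in \S\ref{subsection:amenableaction} to produce elements of $\Map_G(B_{\pm},Q)$, so nothing further is needed here.

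For relative isometric ergodicity of $p_+$, I would start from a fiber-wise isometric $G$-action $q\colon\mathcal M\to V$ together with $q$-compatible $G$-maps $f\colon B_-\times B_+\to\mathcal M$ and $f_0\colon B_+\to V$, and try to show that for $\nu_+$-a.e.\ $b_+$ the slice $b_-\mapsto f(b_-,b_+)$, which lands in the separable metric fiber $\mathcal M_{f_0(b_+)}$, is $\nu_-$-essentially constant. If so, its value defines a measurable $G$-map $f_1\colon B_+\to\mathcal M$ with $q\circ f_1=f_0$ and $f_1\circ p_+=f$ a.e., i.e.\ the dashed arrow in diagram \eqref{equ:relativeergod}, and $p_-$ follows symmetrically. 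To prove the slice is constant I would push $\nu_-$ forward along the slices, obtaining a measurable family $b_+\mapsto\beta_{b_+}=(f(\,\cdot\,,b_+))_*\nu_-$ of probability measures on the fibers, check that a suitable $\mu_+$-average of this family is a $\mu_+$-stationary measure on $\mathcal M$, and argue --- via the Poisson-boundary property of $(B_+,\nu_+)$ --- that $\{\beta_{b_+}\}$ is the a.s.\ martingale limit of the $\mu_+$-random walk acting on that stationary measure. Then the same contraction mechanism that makes $(B_-,\nu_-)$ the $\mu_-$-Poisson boundary --- equivalently, strong approximate transitivity of the boundary action --- forces each $\beta_{b_+}$ to be a Dirac mass, and a Dirac-valued slice is constant.

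Finally, for a lattice $\Gamma<G$ I would note that the $\Gamma$-actions on $B_{\pm}$ are still amenable (amenability of an action passes to closed subgroups) and that relative isometric ergodicity of $p_{\pm}$ for $\Gamma$ follows from the $G$-statement by inducing the data $(q,f,f_0)$ from $\Gamma$ up to $G$, applying the $G$-result, and restricting the resulting section back. The step I expect to be the main obstacle is the degeneration of $b_+\mapsto\beta_{b_+}$ to point masses: the fibers are bare metric spaces with no barycenter map, so this cannot be done by a soft convexity argument and must go through the genuine contractivity of the Furstenberg--Poisson boundary, which is where the hypotheses on $\mu_+$ (spread out, not supported on a proper closed sub-semigroup) get used; the full bookkeeping --- quasi-invariance of $\nu_{\pm}$, stationarity, martingale convergence --- is the content of \cite[Theorem~2.7]{Bader2014}, which we simply quote.
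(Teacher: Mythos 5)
The paper gives no proof of this statement: it is imported verbatim from \cite[Theorem~2.7]{Bader2014} and the text supplies only the citation, so there is nothing in the paper to compare your argument against. Your sketch is a reasonable outline of the ideas behind that theorem --- Zimmer's amenability of the Poisson boundary action handles the amenability clause; relative isometric ergodicity of $p_\pm$ is obtained by showing the conditional slices $b_-\mapsto f(b_-,b_+)$ degenerate to points, via stationarity, martingale convergence along the random walk, and the strong-approximate-transitivity (SAT) property of the Furstenberg--Poisson boundary; and the lattice case is deduced by induction and restriction --- and you correctly close by deferring to the cited source for the full argument, which is exactly what the paper itself does. One small caution if you ever tried to fill this in: the fibers of $q$ are only separable metric spaces, so the ``contraction to Dirac masses'' step cannot rely on weak-$*$ compactness or barycenters in the fiber and really does require the genuine SAT/relative-ergodicity machinery of Bader--Furman (building on Kaimanovich's double ergodicity with coefficients); your remark flagging precisely this as the main obstacle is apt.
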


We now turn to apply the theory of boundary pairs in the case of convex minimal actions on sclocc median algebras. 
For the definition of the space $\Jc_n^{\subseteq}$, see \ref{def:joinspace}

\begin{proposition}\label{prop:match}
Let $G$ be a countable discrete group, $M$ a sclocc $G$-median algebra and suppose that $G\acts M$ is convex minimal. 
Let $(B_-,B_+)$ be a boundary pair for $G$, and suppose that we have two Borel $G$-maps $\varphi_{\pm}:B_{\pm}\rightarrow \Jc_n$. Then, for almost every $(\theta_-,\theta_+)$, $$\pi_{\varphi_+(\theta_+)}(\varphi_-(\theta_-))=\varphi_+(\theta_+)$$  
\end{proposition}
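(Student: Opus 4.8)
The plan is to run a ``projection decreases complexity, hence stabilizes'' argument measured by the counting functions from Corollary~\ref{cor:measurabilitysubshalfspa}, combined with the relative isometric ergodicity of the boundary pair. Fix a countable separating collection $\Hc$ of closed half-spaces as in Corollary~\ref{cor:separatingcollection}; since $G$ is countable we may enlarge $\Hc$ to be $G$-invariant. First I would set up the map on the product: for $(\theta_-,\theta_+)\in B_-\times B_+$ consider the pair of gate-convex sets $C_-:=\varphi_-(\theta_-)$ and $C_+:=\varphi_+(\theta_+)$ (these are elements of $\Jc_n$, hence compact convex, hence gate-convex), and form $\pi_{C_+}(C_-)\subset C_+$. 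By Lemma~\ref{lem:fio2}(1) this projection is convex, and by Lemma~\ref{lem:continousmapping} it is generated by at most $n$ points, so $(\theta_-,\theta_+)\mapsto (C_+,\pi_{C_+}(C_-))$ is a Borel $G$-map $B_-\times B_+\to\Jc_n^\subseteq$. The goal is to show $\pi_{C_+}(C_-)=C_+$ almost everywhere, equivalently that $\Hc(C_-)\supseteq\Hc(C_+)$ a.e., since if $\hs\in\Hc(C_+)\setminus\Hc(C_-)$ then (as in the proof of Lemma~\ref{lem:convgateisomorphism}) $\pi_{C_+}(C_-)$ misses one side of $\hs$ inside $C_+$, so $\pi_{C_+}(C_-)\subsetneq C_+$; conversely if $\Hc(C_+)\subseteq\Hc(C_-)$ then every half-space cutting $C_+$ cuts $C_-$, and one checks $\Delta(x,\pi_{C_+}(C_-))=\varnothing$ for all $x\in C_+$, forcing equality.

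Next I would exploit the boundary pair structure. Consider the integer-valued Borel function
\[
n(\theta_-,\theta_+):=\big|\Hc(\varphi_+(\theta_+))\setminus \Hc(\varphi_-(\theta_-))\big|,
\]
which is measurable by Corollary~\ref{cor:measurabilitysubshalfspa}; we want $n\equiv 0$. The point is that $n$ is invariant under the $B_-$-coordinate in the following sense: replacing $C_-$ by the possibly-larger set $C_-'$ obtained by flowing is not literally available, so instead I would argue via relative isometric ergodicity of $p_+:B_-\times B_+\to B_+$. Apply the defining diagram \eqref{equ:relativeergod} with the countable-fibered space being the disjoint union of finite subsets of the $G$-set $\Hc$: concretely, $\mathcal M:=\{(\theta_+,S): S\subseteq \Hc(\varphi_+(\theta_+))\text{ finite}\}$ with its trivial fiberwise metric over $V:=B_+$, the map $f:B_-\times B_+\to\mathcal M$ sending $(\theta_-,\theta_+)$ to $\big(\theta_+,\Hc(\varphi_+(\theta_+))\setminus\Hc(\varphi_-(\theta_-))\big)$ once we know this difference is finite, and $f_0=\varphi_+$. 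Relative isometric ergodicity then produces a $G$-equivariant Borel section $f_1:B_+\to\mathcal M$, i.e.\ a $G$-map $\theta_+\mapsto S(\theta_+)$ with $S(\theta_+)\subseteq\Hc(\varphi_+(\theta_+))$ finite and, by ergodicity of $B_-\times B_+$, equal to $\Hc(\varphi_+(\theta_+))\setminus\Hc(\varphi_-(\theta_-))$ for a.e.\ $(\theta_-,\theta_+)$. In particular the map $\theta_-\mapsto \Hc(\varphi_-(\theta_-))\cap \Hc(\varphi_+(\theta_+))$ is a.e.\ independent of $\theta_-$ for fixed $\theta_+$, and symmetrically; feeding this back and using that $\Hc$ is separating together with convex minimality of $G\acts M$ should force $S(\theta_+)=\varnothing$.

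The main obstacle — and the step I would spend the most care on — is the finiteness of $\Hc(C_+)\setminus\Hc(C_-)$, which is needed before the countable-fibered machinery of \S\ref{sec:ergodic} even applies. I would first handle the case where this set is countably infinite by noting $|\Hc(C_+)\setminus\Hc(C_-)|$ is still a Borel $\bbN\cup\{\infty\}$-valued function (Corollary~\ref{cor:measurabilitysubshalfspa}), so $B_+$ splits into $G$-invariant pieces by this cardinality, and by ergodicity of $B_-\times B_+$ the value is a.e.\ constant; if that constant is $\infty$ one derives a contradiction with $C_+\in\Jc_n$ being generated by $n$ points (a cube-like obstruction, or directly: a convex set generated by $n$ points has only finitely many ``extreme'' directions controlling the cut pattern, via the $\Ends$ analysis of \S\ref{sec:constructions} and Corollary~\ref{cor:suppintiscub}). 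Once finiteness is in hand, the complexity function $n(\theta_-,\theta_+)$ can only decrease along the dynamics, a.e.\ constancy pins it down, and the separating property of $\Hc$ upgrades ``$\pi_{C_+}(C_-)=C_+$'' to the statement as written. The remaining bookkeeping — measurability of all the maps into $\Jc_n$, $\Jc_n^\subseteq$, $\Cub(M)$ and $2^\Hc$ — is supplied verbatim by Lemmata~\ref{lem:joinclosed}, \ref{lem:continousmapping}, \ref{lem:probtosubhalfspace} and Corollary~\ref{cor:measurabilitysubshalfspa}.
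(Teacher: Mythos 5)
Your setup is the paper's: you form the Borel $G$-map $(\theta_-,\theta_+)\mapsto(\varphi_+(\theta_+),\pi_{\varphi_+(\theta_+)}(\varphi_-(\theta_-)))$ into $\Jc_n^\subseteq$, exactly as in the text. The divergence begins immediately after: the paper feeds this map directly into the relative-isometric-ergodicity diagram for the fibration $q:\Jc_n^\subseteq\to\Jc_n$, obtaining a $G$-map $\varphi':B_+\to\Jc_n^\subseteq$ with $\mathfrak f=\varphi'\circ p_+$ a.e., so that the projected set $\pi_{\varphi_+(\theta_+)}(\varphi_-(\theta_-))$ is a.e.\ independent of $\theta_-$. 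One then fixes a $\theta_+$ and a full-measure $\Omega\subset B_-$ for which this common set $I'$ is a proper subset of $I:=\varphi_+(\theta_+)$, picks $x\in I\setminus I'$ and $\hs\in\Delta(I',x)$, uses Lemma~\ref{lem:fio2}(1) to conclude $\varphi_-(\theta)\subset\hs$ for $\theta\in\Omega$, and contradicts convex minimality via the closed convex hull of $\bigcup_{\theta\in\Omega}\varphi_-(\theta)$. You instead detour through half-space combinatorics and a counting function on $\Hc$, which is both longer and, more importantly, introduces a step you cannot close.

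The genuine gap is the finiteness of $\Hc(\varphi_+(\theta_+))\setminus\Hc(\varphi_-(\theta_-))$, which you correctly flag as the main obstacle but then handle only by gesturing at a ``cube-like obstruction.'' This is not just a missing detail --- the claim is false in the generality you invoke it. An element of $\Jc_n$ is a compact convex set generated by $n$ points, and even an interval $[a,b]$ in a sclocc median algebra with countable intervals can be cut by infinitely many half-spaces from a separating countable family $\Hc$: take for instance $M$ the one-point compactification of $\bbZ$ with its linear median structure, which is sclocc with countable intervals, and an interval of the form $[-k,\infty]$; the half-spaces $\{n,n+1,\dots\}\cup\{\infty\}$ for $n>-k$ all cut it. So $|\Hc(C_+)\setminus\Hc(C_-)|=\infty$ is an entirely possible essential value, your target space $\mathcal M$ of finite subsets does not receive the map, and the relative-isometric-ergodicity diagram you want to complete is never well-posed. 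The ergodicity-forces-a-constant observation does not rescue this: ``constant equals $\infty$'' is not a contradiction, and nothing in $\Jc_n$-membership or the $\Ends$ analysis rules it out.

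Secondly, even granting finiteness, the conclusion of your argument --- ``feeding this back and using that $\Hc$ is separating together with convex minimality should force $S(\theta_+)=\varnothing$'' --- is not an argument; the paper's contradiction step is short but does real work (Fubini to extract $\theta_+$ and $\Omega$, the choice of $\hs\in\Delta(I',x)$, Lemma~\ref{lem:fio2}(1) to push $\varphi_-(\theta)$ into $\hs$, convex minimality applied to $\overline{\Conv}(\bigcup_\theta\varphi_-(\theta))$), and you have not supplied an analogue. You should drop the $\Hc$-counting layer entirely: once you have the map into $\Jc_n^\subseteq$, apply the boundary-pair property to the fibration $\Jc_n^\subseteq\to\Jc_n$ directly and run the convex-minimality contradiction as above. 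The $\Hc$ machinery is indeed used in the paper --- but later, in the proof of Theorem~\ref{thm:nocubicalfactor}, after Proposition~\ref{prop:match} is already available, and under hypotheses (balanced measures, hence cubical supports) that make the relevant differences finite for structural rather than counting reasons.
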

\begin{proof}
Consider the map $\mathfrak{f} :B_-\times B_+ \rightarrow \Jc_n^{\subseteq}$ defined by $$(\theta_-,\theta_+)\mapsto (\varphi_+(\theta_+),\pi_{\varphi_+(\theta_+)}(\varphi_-(\theta_-))$$ 
It is a measurable $G$-map. For the measurably part, see lemma \ref{lem:continousmapping}.   
As $(B_-,B_+)$ is a boundary pair for $G$, there is a compatible $G$-map $\varphi': B_+\rightarrow  \Jc_n^{\subseteq}$ making the following diagram commutative:

\begin{center}
    \begin{tikzpicture}
  \matrix (m) [matrix of math nodes,row sep=3em,column sep=4em,minimum width=2em]
  {
     B_-\times B_+ &  \Jc_n^{\subseteq} \\
     B_+ & \Jc_n\\};
  \path[-stealth]
    (m-1-1) edge node [left] {$p_+$} (m-2-1)
            edge node [above] {$\mathfrak{f}$} (m-1-2)
    (m-2-1) (m-2-1) edge node [above] {$\varphi_+$} (m-2-2)
            edge [dashed] node [above] {$\varphi'$} (m-1-2)
    (m-1-2) edge node [left] {$p$} (m-2-2);
\end{tikzpicture}
\end{center}

where $p_+:B_-\times B_+\rightarrow B_+$ and $p: \Jc_n^{\subseteq}\rightarrow \Jc_n$ are the natural propjections.
Note that it follows by the diagram, that for almost every $ (\theta_+,\theta_-)$ $$(\dagger)\fs(\theta_+,\theta_-)=\varphi'(\theta_+)= (\varphi_+(\theta_+),\pi_{\varphi_+(\theta_+)}(\varphi_-(\theta_-)))$$

For the sake of contradiction, suppose there exists a subset $A\subset B_-\times B_+$ of positive $\nu_-\otimes\nu_+$-measure, such that for every $(\theta_+,\theta_-)\in A$, $$(\ddagger)\pi_{\varphi_+(\theta_+)}(\varphi_-(\theta_-))\varsubsetneq\varphi_+(\theta_+)$$

As $A$ is $G$-invariant, it follows by ergodicity that it is of full measure. By Fubin's theorem, we can find $\theta_+\in B_+$ and $\Omega\subset B_-$ with full $\nu_-$-measure, such that $(\dagger)$ and $(\ddagger)$ holds for the pair $(\theta_+,\theta_-)$, for every $\theta_-\in \Omega$. In particular, for every $\theta,\theta'\in \Omega$
$$I':=\pi_{\varphi_+(\theta_+)}(\varphi_-(\theta))=\pi_{\varphi_+(\theta_+)}(\varphi_-(\theta'))\varsubsetneq \varphi_+(\theta_+)=:I$$

Fix $x\in I\backslash I'$, and a closed half-space $\hs\in\Delta(I',x)$. It follows from lemma \ref{lem:fio2}.(1), that for $\theta\in \Omega$, $\varphi_-(\theta)\subset \hs$. Since we may assume $\Omega$ is $G$-invariant, we got a contradiction to the minimality of the action: the closure of the convex hull of the union of all $\varphi_-(\theta)$ for $\theta\in \Omega$, is a $G$-invariant closed convex set that not contains the element x. Therefore, the proposition holds. 

\end{proof}



    



\section{Proof of Theorems \ref{thm:stationary-nofactors}, \ref{thm:nocubicalfactor} and \ref{thm:main}}
\label{sec:theoremnocubicalfactor}

In what follows, the first two subsections are devoted to the proof of theorem \ref{thm:nocubicalfactor} and the latter subsections are devoted to the proofs of 
theorems \ref{thm:stationary-nofactors} and \ref{thm:main} correspondingly. 

\subsection{Proof of Theorem \ref{thm:nocubicalfactor}: Existence of boundary maps}
Let M be a minimal sclocc $G$-median algebra, with countable intervals and with no cubical factors. 
Let $(B_-,\nu_-)$ and $(B_+,\nu_+)$ be two $G$-Lebesgue spaces, such that the pair $(B_-,B_+)$ is a boundary pair for $G$.
In this subsection we will prove the existence of a $G$-map $B_-\to M$.
The proof for $B_+$ is done in a similar way.

Observe that the self-median operator $\Phi$ acts continuously on the compact convex spaces 
\[ \Map_{G}(B_{\pm},\Prob(M)). \] 
These spaces are not empty, as the actions of $G$ on $B_+$ and $B_-$ are amenable. 
See subsection \ref{subsection:amenableaction}.
It follows by Tychonoff's theorem that $\Map_{G}(B_{\pm},\Prob(M))^{\Phi}=\Map_{G}(B_{\pm},\Prob(M)^{\Phi})\neq \varnothing$. See \cite{tychonoffixedpoint}. 
For the rest of this section we fix two maps $$\varphi_{\pm}\in \Map_{G}(B_{\pm},\Prob(M)^{\Phi}).$$ In sake of abbreviation, we might confuse between an element $\theta\in B_{\pm}$ and its image under the appropriate map.

Fix a countable collection of closed half-spaces, as in corollary \ref{cor:separatingcollection}, taking any additional half-spaces so that the collection will be $G$-invariant, and denote it by $\Hc$. 
For every $\theta_{\pm} \in B_{\pm}$ and every $i\in \{0,1/2,1\} $, we define the following subset of $\Hc$: $$\Hc_{\theta}^i:=\{\hs \ | \ \theta_{\pm}(\hs)=i\}$$
As almost every $\theta_{\pm}$ is balanced, it follows by theorem \ref{thm:balancediscubical} that its support constitutes a subcube of $M$. 
Therefore, the sets $\Hc_{\theta_{\pm}}^0, \Hc_{\theta_{\pm}}^{1/2}$ and $\Hc_{\theta_{\pm}}^1$ constitute a partition of $\Hc$.

Among all these three sets, only those of the form $\Hc_{\theta_{\pm}}^{1/2}$ will be in use. 
So let us denote them with the abbreviated notation $\Hc_{\theta\pm}$.
Consider the following map:
\begin{align*}
  (\dagger) \   & \psi: B_-\times B_+\rightarrow (\bbN\cup \infty)  \\
    & (\theta_-,\theta_+)\mapsto |\Hc_{\theta_-}\Delta\Hc_{\theta_+}|
\end{align*}

According to corollary  \ref{cor:measurabilitysubshalfspa}, this map is Borel and clearly it is $G$-invariant. As $G$ acts ergodicly on $B_-\times B_+$, the essential image of $\psi$ is some constant $m\in \bbN\cup \{\infty\}$. 
We claim that the essential image is in fact zero.
This will promise us that the image of $\varphi_+$ inside $\Prob(M)$ consists of delta measures. 

We would like to draw the reader's attention, that the use of the map $(\dagger)$ is greatly inspired by the proof of theorem 7.1 in \cite{fer18}, although the analysis is substantially different.

For a generic $\theta_{\pm}\in B_{\pm}$, we set $C_{\pm}:=C_{\theta_{\pm}}:=\supp\big(\varphi_{\pm}(\theta_{\pm})\big)$, $I_{\pm}:=I_{\theta_{\pm}}:=\overline{\Conv(C_{\pm})}=\Conv(C_{\pm})$, and we denote by $\pi_{\pm}$ or $\pi_{\theta_{\pm}}$ the gate-projections of $I_{\pm}$. 
As the convex hull of a cube is an interval, the assignments $\theta_{\pm}\mapsto I_{\pm}$ are maps from $B_{\pm}$ to $\Jc_2$.
We consider also the assignments $I\mapsto \supp(I)$ from $\Jc_2$ to $\Cub(M)$, and $C$ mapped to its unique fully supported balanced measure $\lambda_C$, from $\Cub(M)$ to $\Prob(M)^{\Phi}$. 
It follows by lemmata \ref{lem:cubetoint}, \ref{lem:cubetobalanced} and corollary \ref{cor:inttocubeismeasurable}, that all these maps and their various compositions, are measurable. 
It is easy to check that they are also $G$-maps.




Let us show that $m=0$.
For the sake of contradiction suppose that for almost every $(\theta_-,\theta_+)$, $$\Hc_{\theta_+}\Delta \Hc_{\theta_-}\neq \varnothing$$
By proposition \ref{prop:match}, for almost every $(\theta_-,\theta_+)$, $I_+=\pi_+(I_-)$ and $I_-=\pi_-(I_+)$. 
The former assertion implies that $\Hc_{\theta_+}\subset \Hc_{\theta_-}$, while the latter means that $\Hc_{\theta_-}\subset \Hc_{\theta_+}$.
This is a contradiction, thus, we may assume from now on that $m=0$.

Let $\Omega\subset B_-$ be a Borel set of $\nu_-$-measure 1, and $\theta_+\in B_+$ such that for every $\theta_-\in \Omega$ 
$$\Hc_{\theta_+}=\Hc_{\theta_-}$$
In particular, for every $\theta,\theta'\in \Omega$ $$\Hc':=\Hc_{\theta}=\Hc_{\theta'}$$
Moreover, we hvae
$$\Hc(I_{\theta})=\Hc_{\theta}=\Hc_{\theta'}=\Hc(I_{\theta'})$$
Therefore, it follows by lemma \ref{lem:convgateisomorphism} that $I_{\theta}$ and $I_{\theta'}$ are isomorphic, and the isomorphisms are given by $\pi_{\theta}$ and $\pi_{\theta'}$. 

Let us now show that all these intervals are in fact points.
Fix $\theta_0\in \Omega$. 
By lemma \ref{lem:InducedAction} and the above discussion, we may consider the induced action of $G$ on $I_0:=I_{\theta_0}$. 
Observe that this action is minimal.
Indeed, as $\pi_0:=\pi_{I_0}$ is a continuous median $G$-map, the pre-image of any closed $G$-invariant median subalgebra of $I_0$, is a median subalgebra with the same properties of M. 

In particular, since $C_0:=C_{\theta_0}$ is a $G$-invariant median subalgera of $I_0$, we get that $I_0=C_0$. 
As $\theta_0$ is generic, it follows that for almost every $\theta\in B_-$, $I_{\theta}=C_{\theta}$.

But now, $\pi_0$ is a surjective morphism onto a cube.
Since $M$ has no cubical factors, $C_0$ must be a point.
Similarly, $C_{\theta}$ is a point, for every $\theta \in \Omega$.
That is, $\theta\mapsto C_{\theta}$ can be viewed as a Borel $G$-map between $B_-$ to $M$.
For $\theta\in B_-$, we denote its corresponding point in $M$ by $x_{\theta}$.


\subsection{Proof of Theorem \ref{thm:nocubicalfactor}: Uniqueness of boundary maps}
We now show that there are no other $G $-maps from $B_-$ into $\Prob(M)$.
Denote by $\phi_{\pm}:B_{\pm}\rightarrow M$ our boundary maps, and suppose we have another $G$-map $\psi:B_-\rightarrow \Prob(M)$ .

Define $Y=M\times \Prob(M)$ and set $T:=\{(x,y,z)\in M^3 \ : \ y\in [x,z]\}$.
We denote by $p_i:T\rightarrow M$ the projection onto the $i'th$ coordinate of elements in $T$.
Define the following space:
$$X:=\{\eta\in\Prob(T)\ | \ \exists a_{\eta}\in M \text{ s.t. }(p_1)_*(\eta)=\delta_{a_{\eta}}, \text{ and } p_3 \text{ is a measure iso.}\}$$

We have a natural projection $q:X\rightarrow Y$, namely $\eta\mapsto (a_{\eta},(p_3)_*(\eta))$.
This map will fit to the right side of diagram \ref{equ:relativeergod}.
For some purposes, it will be convenient to give an equivalent description of $X$.
For every $(a_y,\eta_y)=y\in Y$, define the following space:
$$X_y:=\{f\in \Map ( (M,\eta_y), M)\ | \eta_y- \text{a.e. } c\in M, (a_y,f(c),c)\in T\} $$

Here, the space $\Map ( (M,\eta_y), M)$ denotes the space of classes of measurble functions defined up to $\eta_y$-almost everywhere.
We claim that $X_y$ is isomorphic to the fiber of $y$ in $X$ under the projection $q$.
Indeed, fix $f\in X_y$, and define the following positive functional on $C(T)$:
$$\forall s\in C(T):s\mapsto\int_M s(a_y,f(c),c)d\eta_y(c) $$

Denote by $\eta_{y,f}$ the corresponding measure, and let us check that $\eta_{y,f}\in q^{-1}(y)$.
It is rather easy to verify that $(p_1)_*(\eta_{y,f})=\delta_{a_y}$ and that $(p_3)_*(\eta_{y,f})=\eta_y$.
Lastly we note that the $\eta_y$-a.e. defined map $\overline{f}:M\rightarrow T$, $c\mapsto (a_y,f(c),c)$, is the measurable inverse of $p_3$ as a map with the domain $(T,\eta_{y,f})$.
Given a measure $\eta\in q^{-1}(y)$, denote by $p_3^{-1}$ the measurable inverse of $p_3:(T,\eta)\rightarrow (M,\eta_y)$, then indeed, $\eta=\eta_{y,p_2\circ p_3^{-1}}$. 

We now move to define a metric on the projection $q$, in the sense of section \ref{sec:ergodic}. 
Given two elements $a,b\in M$, denote by $\delta_{ab}$ the function that equals 1 if $a=b$, and 0 otherwise.
For $(a_y,\eta_y)=y\in Y$ and two functions $f,f'\in X_y$, define $d_y(f,f')$ to be the following value:
$$(*) \ d_{\eta_y}(f,f'):= d_y(f,f'):=\int_M 1-\delta_{f(c)f'(c)}d\eta_y(c)$$
It is rather simple to check that this is indeed a metric. 
We claim that the space $(X_y,d_y)$, is separable metric space. 
In addition, as described in section \ref{sec:ergodic}, we need also to check that the map we get $d:X\times_Y X\rightarrow [0,\infty]$ is Borel.
See Lemmata \ref{lem:fibersseparable} and \ref{lem:distisborel} for the details.

The diagonal action of $G$ on $M^3$, induces a natural action on $T$.
As this action commutes with both $p_1$ and $p_3$, it gives rise to an induced action on $X$.
Finally, consider the natural action of $G$ on $Y$. 
Let us define an action on $X$, which turns $q$ into a $\G$-map:
Given $\eta\in X$ and $g\in G$, the measurable inverse of $p_3$ with respect to $g_*\eta$ is $g_*(g \cdot p_3^{-1})$, where $p_3^{-1}$ is the measurable inverse with respect to $\eta$. 
Therefore, the action of $g\in G$ on the fiber $X_y$ for $y\in Y$ translates to
$$f\in X_y\mapsto g_*(g \cdot f)\in X_{g y} $$
The above turns the action of $G$ on $q$ into a fiber-wise isometric action. Indeed, if $y\in Y$, and $f,f'\in X_y$:
\begin{align*}
    d_{g y}(g_*(g \cdot f),g_*(g \cdot f')) & = \int_M 1-\delta_{g_*(g \cdot f)(c)g_*(g \cdot f')(c)}dg_*\eta_y(c) \\
    & = \int_M 1-\delta_{g_*( f)(c)g_*(f')(c)}dg_*\eta_y(c) \\
    & = \int_M 1-\delta_{g_*( f)(g c) g_*(f')(g c)}d\eta_y(c) \\
    & = \int_M 1-\delta_{f(c)f'(c)}d\eta_y(c) \\
    & = d_y(f,f')
\end{align*}

As needed.

Let us now define the horizontal arrows in diagram \ref{equ:relativeergod}.
The lower arrow, namely $\varphi:B_-\rightarrow Y$, is defined by $\theta_-\mapsto (\phi_-(\theta_-),\psi(\theta_-))$.
We define the upper horizontal arrow in terms of the fibers of $q$.
Given a generic pair $(\theta_-,\theta_+)\in B_-\times B_-$, we define a measurable map $f_{(\theta_-,\theta_+)}:M\rightarrow M$ by $f_{(\theta_-,\theta_+)}(c)=m(\phi_-(\theta_-),\phi_+(\theta_+),c)$.
Finally, the map $\varphi':B_-\times B_+\rightarrow X $ is defined as follows:
$$\varphi'(\theta_-,\theta_+)=f_{(\theta_-,\theta_+)}\in X_{(\phi_-(\theta_-),\varphi(\theta_-))}$$
We claim that $\varphi'$ is measurable. 
Consider the map from $B_-\times B_+\rightarrow M^2\times \Prob(M)$, defined by $$(\theta_-,\theta_+)\mapsto (\phi_-(\theta_-),\phi_+(\theta_+),\varphi(\theta_-))$$
Clearly it is measurable, and note that $\varphi'$ is just the composition of this map with the map $\mu_{(.)}$ in \ref{lem:boundarypairintoxmeasurable}, and therefore, it is also measurable.
It is easy to verify that it's also a $G$-map.
Indeed, given $(\theta_-,\theta_+)\in B_-\times B_+$, $g\in G$ and $c\in M$
\begin{align*}
    f_{(g\theta_-,g\theta_+)}(c) & = m(g\phi_-(\theta_-),g\phi_+(\theta_+),gg^{-1}c)=g m(\phi_-(\theta_-),\phi_+(\theta_+),g^{-1}c) \\
    & = \big(g_*(g\cdot f_{(g\theta_-,g\theta_+)})\big)(c)
\end{align*}
and note that the last line is precisely the action of $g$ on the elements of $X_{\varphi(\theta_-)}$.

The relatively isometrically ergodic projection $B_-\times B_+\rightarrow B_-$ gives us a map $\varphi'' :B_-\rightarrow X$ that completes the diagram:

\begin{equation*}
 \begin{tikzcd}
B_-\times B_+ \arrow[r, "\varphi'"] \arrow[d]
& X \arrow[d, "q"] \\
B_- \arrow[r, "\varphi"] \arrow[ru, dashrightarrow, "\varphi''"]
& Y
\end{tikzcd}   
\end{equation*}

We claim that for almost every $(\theta_-,\theta_+)\in B_-\times B_+$, and $\psi(\theta_-)$-a.e. $c\in M$ $(\ddagger) \varphi'(\theta_-,\theta_+)(c)=\varphi''(\theta_-)(c)=\phi_-(\theta_-)$.
Suppose this is not the case and note that that the collection of points for which $(\ddagger)$ doesn't hold, defines a measurable set in $B_-\times B_+$. 
For details, see \ref{lem:measurablecondition}.

This collection is invariant under the action of $G$, and therefore, by ergodicity, for a.e. $(\theta_-,\theta_+)\in B_-\times B_+$, there exists $c\in \supp(\psi(\theta_-))$, with $\varphi'(\theta_-,\theta_+)(c)\neq \phi_-(\theta_-)$.
Fix $\Omega\subset B_+$ of measure 1, and $\theta_-\in B_-$, such that for every $\theta_+\in \Omega$,  $\varphi'(\theta_-,\theta_+)=\varphi''(\theta_-)$.
By the above, there exists $c\in \supp(\psi(\theta_-))$, such that for every $\theta_+\in \Omega $, $x=\varphi''(\theta_-)(c)=\varphi'(\theta_-,\theta_+)(c)\neq \phi_-(\theta_-)$.
This is in turn implies a contradiction, as $M$ is a minimal $G$- median algebra.
Indeed, take a closed half-space $\hs\in \Delta(x,\phi_-(\theta_-))$.
By the construction of $\varphi'$ and convexity, it follows that $\phi_+(\theta_+)\in \hs$, for every $\theta_+\in \Omega$.
But now, the closure of the convex hull of all those $\phi_+(\theta_+)$ is contained in $\hs\varsubsetneq M$, and thus, it constitutes a proper $G$-invariant median subalgebra.
Contradiction!

We finish by showing that for a.e. $\theta_-\in B_-$, $\psi(\theta_-)=\delta_{\phi_-(\theta_-)}$.
Suppose this is not the case. 
Consider the $G$-map from $B_-$ to $(\mathcal{C}M)^2$, defined by $\theta\mapsto (\supp(\psi(\theta)),\phi_-(\theta))$.
By assumption, the preimage of the diagonal $\Delta \mathcal{C}M\subset (\mathcal{C}M)^2$ is a $G$ invariant measurable set that is not of full measure.
Therefore, by ergodicity, again, for a.e. $\theta_-$, $\psi(\theta_-)\neq\delta_{\phi_-(\theta_-)}$.

Fix $\Omega\subset B_+$ and $\theta_-\in B_-$ as above, $c\in \supp(\psi(\theta_-))$ different than $\phi_-(\theta_-)$, and a closed half-space $\hs\in\Delta(\phi_-(\theta_-),c)$.
It follows the discussion above that now, $\phi_+(\theta_+)\in \hs$ for every $\theta_+\in \Omega$.
This implies a contradiction, as required.

\begin{lemma}\label{lem:fibersseparable}
    The space $(X_y,d_y)$ is a separable metric space.
\end{lemma}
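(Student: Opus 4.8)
The goal is to prove that $(X_y, d_y)$ is a separable metric space, where $X_y$ consists of (classes of) measurable maps $f:(M,\eta_y)\to M$ with $(a_y, f(c),c)\in T$ for $\eta_y$-a.e.\ $c$, and $d_y(f,f')=\int_M (1-\delta_{f(c)f'(c)})\,d\eta_y(c)$. The plan is to realize $X_y$ as a subspace of a space of measurable sections to which Lemma~\ref{lem:sectionsspaceseparable} applies, and to invoke the fact that a subspace of a separable metric space is separable.

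First I would observe that $d_y$ is genuinely a metric on classes of maps: it is clearly symmetric and nonnegative, it vanishes exactly when $f=f'$ $\eta_y$-a.e.\ (i.e.\ the classes coincide), and the triangle inequality follows from the pointwise inequality $1-\delta_{ac}\le (1-\delta_{ab})+(1-\delta_{bc})$ for $a,b,c\in M$, integrated against $\eta_y$. (This is the routine verification already alluded to in the text, so I would dispatch it in one sentence.)

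Next, the main step: to apply Lemma~\ref{lem:sectionsspaceseparable}, I need to present $X_y$ inside the space $\mathfrak{G}$ of $\mu$-a.e.\ defined measurable sections of $\mathrm{proj}_Y:\bbN\times Y\to Y$ for a suitable setup. The cleanest route is to use that $M$ is a second countable compact (hence standard Borel) space, so by Lemma~\ref{lem:counttoonenum} — or more simply by the classical fact that a standard Borel space embeds Borel-isomorphically into $\bbN$ (when countable) or into $[0,1]$; but here we want a countable-to-one structure — actually the simplest: $M$ with its Borel structure is standard, and we may fix a Borel isomorphism of $M$ onto a Borel subset of a compact metric space, so that the space $\Map((M,\eta_y),M)$ of all $\eta_y$-a.e.\ classes of measurable maps $M\to M$, equipped with the metric $d_{\eta_y}(f,f')=\int_M(1-\delta_{f(c)f'(c)})\,d\eta_y(c)$, is separable. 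To see this separability directly, fix a countable Boolean algebra $\mathcal{B}$ generating the Borel $\sigma$-algebra of $M$ (available since $M$ is second countable) and a countable dense set $\{x_j\}_{j\in\bbN}\subset M$; the countable collection of simple functions of the form $c\mapsto \sum_{i=1}^n x_{j_i}\chi_{B_i}(c)$ with $B_i\in\mathcal{B}$ is $d_{\eta_y}$-dense, by exactly the approximation argument in the proof of Lemma~\ref{lem:sectionsspaceseparable} (approximate the level sets of an arbitrary $f$ from within by elements of $\mathcal{B}$ up to small $\eta_y$-measure, and truncate to finitely many values using Radon-ness of $\eta_y$). Indeed one can literally quote Lemma~\ref{lem:sectionsspaceseparable} after transporting $M$ to $\bbN$ via a Borel isomorphism $M\cong \bbN$ in the countable-interval case, or more robustly run the same proof verbatim.

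Finally, $X_y$ is a subset of this separable metric space $(\Map((M,\eta_y),M), d_{\eta_y})$ — it is carved out by the condition that $(a_y,f(c),c)\in T$ for $\eta_y$-a.e.\ $c$ — and any subset of a separable metric space is separable in the subspace metric, which is precisely $d_y$. Hence $(X_y,d_y)$ is separable, as claimed. I expect the only genuine obstacle to be making the separability of $\Map((M,\eta_y),M)$ airtight without circularity: one must be slightly careful that $M$ is second countable (given, as $M$ is \sclocc) so that a countable generating Boolean algebra exists, and that $\eta_y$ is a Radon measure on the compact metric space $M$ so the inner/outer approximation of Borel sets goes through; both hold, so the argument of Lemma~\ref{lem:sectionsspaceseparable} transfers without change.
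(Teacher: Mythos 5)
Your proposal contains a genuine gap. The central claim — that the ambient space $\Map\big((M,\eta_y),M\big)$, i.e.\ \emph{all} classes of $\eta_y$-a.e.\ measurable maps $M\to M$ with the metric $d_{\eta_y}(f,f')=\int_M(1-\delta_{f(c)f'(c)})\,d\eta_y(c)$, is separable — is false whenever $M$ is uncountable (as it will be in the cases of interest). The metric $1-\delta_{ab}$ is the \emph{discrete} metric on the target, not the topological metric of $M$; consequently, the uncountable family of constant maps $\{f_a:c\mapsto a\}_{a\in M}$ is pairwise at distance exactly $1$ (since $\eta_y(\{c:a=b\})=0$ for $a\neq b$), so this space contains an uncountable $1$-separated set and cannot be separable. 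Your approximation scheme by simple functions valued in a countable dense $\{x_j\}\subset M$ breaks at exactly this point: topological closeness of $x$ to some $x_j$ contributes nothing to $d_{\eta_y}$ unless $x=x_j$ on the nose. The companion step ``truncate to finitely many values using Radon-ness'' also fails: a general $f$ need not be essentially countably valued, so its level sets $\{f^{-1}(\{x\})\}_{x\in M}$ form an uncountable partition of $M$, and there is no finite (or countable) subfamily capturing most of the $\eta_y$-mass. This is precisely why Lemma~\ref{lem:sectionsspaceseparable} is stated for sections of $\bbN\times Y\to Y$: countability of the fiber is what makes the ``truncate to the first $k$ levels'' move available.

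The missing ingredient is the constraint $(a_y,f(c),c)\in T$, i.e.\ $f(c)\in[a_y,c]$, together with the standing hypothesis that intervals in $M$ are \emph{countable}. These make the map $p:Z\to M$, $Z:=\{(b,c):b\in[a_y,c]\}$, $(b,c)\mapsto c$, a countable-to-one Borel map between standard Borel spaces, so Lemma~\ref{lem:counttoonenum} produces a fiber-preserving Borel injection $\psi:Z\to\bbN\times M$ over $M$. Composing $f\mapsto\big(c\mapsto\psi(f(c),c)\big)$ gives an isometric embedding of $X_y$ into the space $\mathfrak G$ of $\eta_y$-a.e.\ sections of $\text{proj}_M:\bbN\times M\to M$, and only \emph{then} does Lemma~\ref{lem:sectionsspaceseparable} apply. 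In other words, separability of $X_y$ is not inherited from a separable ambient $\Map$-space (which does not exist here); it is produced by explicitly trivializing the countable-to-one structure over $M$ coming from the interval constraint. Your writeup gestures at the countable-to-one structure and at a Borel isomorphism ``$M\cong\bbN$ in the countable-interval case,'' but $M$ itself is not countable — it is the intervals $[a_y,c]$ (the $p$-fibers) that are, and that distinction is exactly where the argument must be run.
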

\begin{proof}
    Consider the space $Z:= \{ (b,c)\in M^2\ : \ b\in [a_y,c]\}$ and the natural projection $p:Z\rightarrow M$, $(b,c)\mapsto c$.
    Note that $Z$ is a compact metric space, as it is the image of the continuous map $k:M\times M\rightarrow M\times M$, that defined by $(z,c)\mapsto (m(a_y,z,c),c)$.
    Hence, we are in the situation of Lemma \ref{lem:counttoonenum}.
    Let $\psi:Z\rightarrow \bbN\times M$ be the bimeasurable, injective map such that the following diagram commute:
    \begin{equation*}
         \begin{tikzcd}
            Z \arrow[r, "\psi"] \arrow[d,"p"]
            & \bbN\times M \arrow[d, "\text{proj}_M"] \\
            M \arrow[r, "id_M"] 
            & M
        \end{tikzcd}   
    \end{equation*}
    Denote by $\mathfrak{G}$ the space of classes of $\eta_y$-a.e. defined measurable sections for $\text{proj}_M$.
    We have a natural injective map $\mathfrak{f}:X_y\rightarrow \mathfrak{G}$, defined by $f\mapsto \psi\circ f$.
    Observe that as $\psi$ injective, for every $f,f'\in X_y$ we have $$\{c\in \supp(f)\cap \supp(f')\ : \ f(c)\neq f'(c)\}=\{c\in \supp(f)\cap \supp(f')\ : \ \psi(f(c))\neq \psi(f'(c))\}$$
    That is, $\mathfrak{f}$ is an isometry from $(X_y,d_y)$ into $(\mathfrak{G},d_{\eta_y})$.
    By lemma \ref{lem:sectionsspaceseparable}, the latter is separable metric space, hence, so is the former.
\end{proof}

\begin{lemma}\label{lem:distisborel}
    The map $d:X\times_Y X \rightarrow [0,1]$ is Borel.
\end{lemma}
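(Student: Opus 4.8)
The plan is to exhibit $d$ as the composition of a Borel, measure-valued map with a weak-$*$ lower semicontinuous functional. First note that $T=\{(x,y,z)\in M^3 : m(x,z,y)=y\}$ is closed in the compact space $M^3$, so $T$ and $T\times T$ are compact metrizable and $\Prob(T)$, $\Prob(T\times T)$ are compact metrizable in the weak-$*$ topology. The conditions defining $X$ — that $(p_1)_*\eta$ be a Dirac mass and that $p_3$ be $\eta$-a.e.\ injective — are Borel in $\eta\in\Prob(T)$, so $X$ is a standard Borel space, and hence so is the domain $X\times_Y X=\{(\eta,\eta')\in X\times X : q(\eta)=q(\eta')\}$ of $d$.

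Fix $(\eta,\eta')\in X\times_Y X$ with common image $y=(a_y,\eta_y)=q(\eta)=q(\eta')$, and recall that $\eta=\eta_{y,f}$, $\eta'=\eta_{y,f'}$, where $f$ and $f'$ are the $\eta_y$-a.e.\ defined sections $p_2\circ p_3^{-1}$ attached to $\eta$ and to $\eta'$ (the inverse of $p_3$ being taken with respect to $\eta$, resp.\ $\eta'$). Since $p_3\colon(T,\eta)\to(M,\eta_y)$ is a measure isomorphism, and likewise for $\eta'$, there is a \emph{unique} $\gamma=\gamma_{(\eta,\eta')}\in\Prob(T\times T)$ with first marginal $\eta$, second marginal $\eta'$, and supported on the closed set $D:=\{\bigl((x,y,c),(x',y',c')\bigr)\in T\times T : c=c'\}$: a disintegration over $(M,\eta_y)$ via $(t,t')\mapsto p_3(t)$ forces the fibre of $\gamma$ over $c$ to have both marginals equal to $\delta_{p_3^{-1}(c)}$ and $\delta_{p_3'^{-1}(c)}$, hence to be $\delta_{(p_3^{-1}(c),\,p_3'^{-1}(c))}$. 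Consequently, since $p_3^{-1}(c)=(a_y,f(c),c)$ and $p_3'^{-1}(c)=(a_y,f'(c),c)$,
\[
d(\eta,\eta')=\int_M\bigl(1-\delta_{f(c)f'(c)}\bigr)\,d\eta_y(c)=\gamma(U),\qquad U:=\bigl\{\bigl((x,y,c),(x',y',c')\bigr)\in T\times T : y\neq y'\bigr\},
\]
and $U$ is open in $T\times T$; in particular $d(\eta,\eta')\le 1$ as $\gamma$ is a probability measure.

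It therefore suffices to prove that $(\eta,\eta')\mapsto\gamma_{(\eta,\eta')}$ is Borel, since $\gamma\mapsto\gamma(U)$ is lower semicontinuous (hence Borel) for $U$ open. Consider
\[
R:=\Bigl\{\bigl((\eta,\eta'),\gamma\bigr)\in(X\times_Y X)\times\Prob(T\times T) : (\mathrm{pr}_1)_*\gamma=\eta,\ (\mathrm{pr}_2)_*\gamma=\eta',\ \gamma(D)=1\Bigr\}.
\]
Push-forward along the coordinate projections is weak-$*$ continuous, and $\gamma\mapsto\gamma(D)$ is upper semicontinuous for $D$ closed (so $\{\gamma:\gamma(D)=1\}$ is closed), whence $R$ is Borel; and by the uniqueness established above, every vertical section $R_{(\eta,\eta')}$ with $(\eta,\eta')\in X\times_Y X$ is a singleton. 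A Borel subset of a product of standard Borel spaces all of whose vertical sections are singletons is the graph of a Borel map (Lusin--Novikov, \cite[Theorem 18.10]{Kechris1995}, specialized to singleton sections), so $(\eta,\eta')\mapsto\gamma_{(\eta,\eta')}$ is Borel; composing with $\gamma\mapsto\gamma(U)$ completes the argument.

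The one genuinely delicate point is the uniqueness of $\gamma$, and this is exactly where the requirement in the definition of $X$ that $p_3$ be a \emph{measure isomorphism} (not merely a.e.\ surjective) is used: it is what pins down the coupling over $(M,\eta_y)$ concentrated on $D$ as the graph joining the two measurable inverses of $p_3$. A more elementary route would fix a countable algebra $\{A_n\}$ of Borel subsets of $M$ separating points and write $d(\eta,\eta')=\lim_N\eta_y\bigl(\bigcup_{n\le N}(f^{-1}(A_n)\triangle f'^{-1}(A_n))\bigr)$, expand by inclusion--exclusion, and reduce to Borel-ness of $(\eta,\eta')\mapsto\eta_y\bigl(f^{-1}(B)\cap f'^{-1}(B')\bigr)$; but that quantity is just $\gamma_{(\eta,\eta')}$ evaluated on a box, so the two approaches meet at precisely the same step.
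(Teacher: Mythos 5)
Your argument is correct and follows a genuinely different route from the paper's. The paper's proof derives the closed formula $d_y(x_1,x_2)=2-x_1(\supp x_2)-x_2(\supp x_1)$ and then reduces the problem to two facts about $\Prob(T)$ alone: the support map $\mu\mapsto\supp(\mu)$ into $\mathcal{C}T$ is Borel, and the evaluation $(\mu,F)\mapsto\mu(F)$ is upper semicontinuous on $\Prob(T)\times\mathcal{C}T$; no coupling or selection theorem appears. You instead realize $d$ as the lower semicontinuous functional $\gamma\mapsto\gamma(U)$ applied to the unique coupling $\gamma_{(\eta,\eta')}$ of $(\eta,\eta')$ concentrated on the fiber product $D$, and obtain Borel-ness of the coupling assignment from the unique-section form of Lusin--Novikov. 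Your route is more structural and exposes the optimal-transport flavour of the metric; the paper's is more computational and, importantly, automatically respects the trace Borel structure: since $(\mu,\eta)\mapsto\mu(\supp\eta)$ is shown Borel on all of $\Prob(T)^2$, no hypothesis on $X$ is needed.

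That last point is the one small caveat in your write-up. You assert $X$ is Borel in $\Prob(T)$, but the ``$p_3$ is a measure isomorphism'' condition is not obviously a Borel condition on $\eta$, and the paper sidesteps the issue by defining the Borel structure on $X\times_Y X$ simply as the trace from $\Prob(T)^2$. To apply a uniformization theorem you should instead work with
\[
R':=\bigl\{(\eta,\eta',\gamma)\in\Prob(T)^2\times\Prob(T\times T):(\mathrm{pr}_1)_*\gamma=\eta,\ (\mathrm{pr}_2)_*\gamma=\eta',\ \gamma(D)=1\bigr\},
\]
which is \emph{closed} (hence Borel) with compact vertical sections; Kuratowski--Ryll-Nardzewski then yields a Borel selection $\sigma$ on the Borel set $\{R'_{(\eta,\eta')}\neq\varnothing\}\subset\Prob(T)^2$, and over $X\times_Y X$ the section is the singleton $\{\gamma_{(\eta,\eta')}\}$, so $\sigma$ restricts to the map you want without any Borel-ness assumption on $X$. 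With this adjustment the proof is complete; your closing observation that a.e.\ injectivity of $p_3$ is what pins down the coupling matches the role this hypothesis also plays in the paper's computation.
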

\begin{proof}
    Here the Borel structure on $X\times_Y X$ is the one induced by the ambiant space $X\times X$, where $X\subset \Prob(T)$ is eqquiped with the weak* topology.

    So first we want to describe the distance $d_y$ in terms of elements in $X$.
    Let $x_1,x_2\in X$, such that $q(x_1)=q(x_2)=y\in Y$, and let $l^i:M\rightarrow T$ be the corresponding inverses for $p_3$.
    Then,
    \begin{align*}
        d_y(x_1,x_2)& =d_y(p_2\circ l^1, p_2\circ l^2) = \eta_y(\{c\in M\ : \ p_2(l^1(c))\neq p_2(l^2(c))\}) \\
        & = \eta_y(\{c\in M\ : \ l^1(c)\neq l^2(c)\}) = \eta_y(p_3(\supp(x_1)\Delta \supp(x_2))) \\ 
        & = x_1(\supp(x_1)\backslash \supp(x_2)) + x_2(\supp(x_2)\backslash \supp(x_1)) \\
        & = 2-x_1( \supp(x_2))-x_2( \supp(x_1))
    \end{align*}
    So let us show that the map $(\mu,\eta)\mapsto \mu(\supp(\eta))$ is a measurable map from $\Prob(T)^2$ to [0,1].

    First, let $\mathcal{C}T$ denote the space of closed subsets of $T$, equipped with the Chabauty topology.
    We claim that the map $v:\Prob(T)\mapsto \mathcal{C}T$, defined by $\mu\mapsto \supp(\mu)$, is measurable.
    Given an open set $U\subset T$, the map $t_U:\Prob(T)\rightarrow [0,1]$ that evaluates every measure $\mu$ at $U$, is a measurable map (in fact, it is a lower semi-continuous map).
    Therefore, the following sets are evidentially measurable:
    \begin{align*}
        & v^{-1}\big(\{F \in \mathcal{C}T\ : \ F\cap U\neq \varnothing\} \big)=t_U^{-1}((0,1]) \\
        & v^{-1}\big(\{F \in \mathcal{C}T\ : \ F\subset U\}\big)  =\bigcup_n t_{U_n}^{-1}(\{1\})
    \end{align*}
    Where $U_n=T\backslash \overline{V_{1/n}(U^c)}$, and where $V_{\epsilon}(F)$ is the epsilon neighborhood of $F$ in $T$.
    This implies the measurability of $v$.

    Finely, let us show that the map $u:\Prob(T)\times \mathcal{C}T\rightarrow [0,1]$, given by $(\mu,F)\mapsto \mu(F)$ is measurable, and more specifically, upper semi-continuous.
    Suppose that $(\mu_n,F_n),(\mu,F)\in \Prob(T)\times \mathcal{C}T$ are such that $(\mu_n,F_n)\xrightarrow{n}(\mu,F)$.
    For every $m\in\bbN$, we know that $\underset{n\rightarrow \infty}{\lim} \sup \mu_n(\overline{V_{1/m}(F)})\leq \mu(\overline{V_{1/m}(F)})$.
    In addition, for $n\gg 0$ $F_n\subset V_{1/m}(F)\subset \overline{V_{1/m}(F)}$. 
    Therefore, for every $m\in\bbN$, $\underset{n\rightarrow \infty}{\lim}\sup \mu_n(F_n)\leq \mu(\overline{V_{1/m}(F)})$.
    We end by noting that $\underset{m\rightarrow \infty}{\lim}\mu(\overline{V_{1/m}(F)})=\mu(F)$.
    
\end{proof}

\begin{lemma}\label{lem:measurablecondition}
    In the above notations, the condition $(\ddagger)$ defines a measurable set in $B_-\times B_+$.
\end{lemma}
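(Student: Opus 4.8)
The plan is to translate the pointwise condition $(\ddagger)$ into a pair of equalities of Radon measures on $T$, and then read off Borel measurability from the maps already constructed. Recall that a point $\eta$ of the fibre $X_y$ over $y=(a_y,\eta_y)\in Y$ corresponds to the $\eta_y$-a.e.\ defined function $p_2\circ p_3^{-1}$, where $p_3^{-1}\colon(M,\eta_y)\to(T,\eta)$ is the measurable inverse of the measure isomorphism $p_3$. First I would observe that, since $p_3$ transports $\eta_y$ to $\eta$, the assertion ``the function attached to $\eta$ equals the constant $a_y$ for $\eta_y$-a.e.\ $c$'' is equivalent to ``$p_2=a_y$ holds $\eta$-a.e.'', i.e.\ to the single equality $(p_2)_*\eta=\delta_{a_y}$ in $\Prob(M)$. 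Applying this with $y=\varphi(\theta_-)=(\phi_-(\theta_-),\psi(\theta_-))$ to the two elements $\varphi'(\theta_-,\theta_+)$ and $\varphi''(\theta_-)$ of $X_{\varphi(\theta_-)}$ — both lie in this fibre, since $q\circ\varphi''=\varphi$ and $q\circ\varphi'=\varphi\circ\pi$ where $\pi\colon B_-\times B_+\to B_-$ is the projection — the condition $(\ddagger)$ at a point $(\theta_-,\theta_+)$ becomes exactly the conjunction
\[
(p_2)_*\varphi'(\theta_-,\theta_+)=\delta_{\phi_-(\theta_-)}\qquad\text{and}\qquad (p_2)_*\varphi''(\theta_-)=\delta_{\phi_-(\theta_-)} .
\]

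With this reduction in hand, the rest is a routine chain of measurable compositions. The map $\varphi'\colon B_-\times B_+\to\Prob(T)$ is measurable (it is a measurable map followed by $\mu_{(\cdot)}$ of Lemma~\ref{lem:boundarypairintoxmeasurable}, with image inside the closed subspace $\Prob(T)\subset\Prob(M^3)$); the map $\varphi''\colon B_-\to\Prob(T)$ is measurable because it is produced by relative isometric ergodicity; the pushforward $(p_2)_*\colon\Prob(T)\to\Prob(M)$ is weak-$*$ continuous; the Dirac embedding $M\hookrightarrow\Prob(M)$ is continuous; and $\phi_-$ is measurable. Hence $(\theta_-,\theta_+)\mapsto\big((p_2)_*\varphi'(\theta_-,\theta_+),\delta_{\phi_-(\theta_-)}\big)$ and $(\theta_-,\theta_+)\mapsto\big((p_2)_*\varphi''(\theta_-),\delta_{\phi_-(\theta_-)}\big)$ are measurable maps into $\Prob(M)\times\Prob(M)$; since $\Prob(M)$ is metrizable, its diagonal is closed, so the preimages of the diagonal under these two maps are Borel. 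The locus where $(\ddagger)$ holds is the intersection of these two Borel sets, hence Borel, and therefore so is its complement — which is the set whose measurability the lemma asserts.

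The only step that genuinely needs care is the first reduction: one must pin down precisely the correspondence between $X_y$ and the subset $\{\eta\in\Prob(T):(p_1)_*\eta=\delta_{a_y},\ (p_3)_*\eta=\eta_y,\ p_3\text{ a measure isomorphism}\}$ of $\Prob(T)$, and verify that under it ``the attached function is $\eta_y$-a.e.\ constant equal to $a_y$'' matches the condition $(p_2)_*\eta=\delta_{a_y}$. Everything after that is composition of measurable maps together with closedness of the diagonal in a metrizable space, so I do not anticipate any obstruction there; I would therefore devote most of the write-up to making this dictionary explicit and compatible with the $(p_2)_*$ description.
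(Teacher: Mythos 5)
Your proof is correct, and it takes a genuinely different (and arguably slightly cleaner) route than the paper's. The paper reads off $(\ddagger)$ as the equality $\supp\varphi'(\theta_-,\theta_+)=\{\phi_-(\theta_-)\}^2\times\supp\psi(\theta_-)$ and then invokes the measurability of the support map $\supp\colon\Prob(T)\to\mathcal{C}T$ (established inside the proof of Lemma~\ref{lem:distisborel}) to exhibit the relevant set as a preimage of a Borel subset of $\mathcal{C}(M^3)$; note the paper only writes the condition for $\varphi'$, which is harmless since $\varphi'=\varphi''\circ p_+$ almost everywhere by commutativity of the relative-ergodicity diagram. You instead translate the ``$\eta_y$-a.e.\ constant'' condition into the equality of pushforwards $(p_2)_*\eta=\delta_{a_y}$, handle $\varphi'$ and $\varphi''$ separately as an explicit conjunction (which is faithful to the literal statement of $(\ddagger)$), and then use only the weak-$*$ continuity of $(p_2)_*$, the continuity of the Dirac embedding, and closedness of the diagonal in the metrizable space $\Prob(M)$. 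What your version buys is that it sidesteps the measurability of $\supp$ entirely, replacing it by the continuity of a pushforward, which is more elementary; what it loses, in the context of the paper, is essentially nothing, since the measurability of $\supp$ is needed in the proof of Lemma~\ref{lem:distisborel} in any case. The one step worth spelling out carefully in a final write-up is the dictionary you flag at the end: that under the identification $X_y\simeq q^{-1}(y)\subset\Prob(T)$ the attached function is $p_2\circ p_3^{-1}$ and therefore $(p_2)_*\eta=(p_2\circ p_3^{-1})_*\eta_y$, so ``$\eta_y$-a.e.\ constant equal to $a_y$'' is exactly $(p_2)_*\eta=\delta_{a_y}$; once that is in place the rest is routine.
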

\begin{proof}
    The relation described in $(\ddagger)$, defines the collection of pairs $(\theta_-,\theta_+)\in B_-\times B_+$ such that $\supp \varphi'(\theta_-,\theta_+)= \{\phi_-(\theta_-)\}^2\times \supp\psi(\theta_-)$.
    Consider the map $\supp\circ \varphi':B_-\times B_+\rightarrow \mathcal{C}T\subset\mathcal{C}M^3$, which is measurable, and note that the collection above is precisely the preimage of the set $\Delta M\times \mathcal{C}M\subset \mathcal{C}(M^3)$.
\end{proof}

\subsection{Proof of Theorem \ref{thm:stationary-nofactors}}
Let $G$, $M$ and $\mu$ be as in the theorem and set $\mu_+=\mu$ and $\mu_-=\iota_*(\mu)$, for $\iota(g)=g^{-1}$.
Let $(B_+,\nu_+)$ be the Furstenberg-Poisson boundary corresponds to $(G,\mu_+)$, and $(B_-,\nu_-)$ the Furstenberg-Poisson boundary corresponds to $(G,\mu_-)$.
According to Theorem \ref{thm:mainboundarypair}, the pair $(B_-,B_+)$ is a boundary pair.
Thus, by Theorem \ref{thm:nocubicalfactor},
there exists a unique $G$-map from $B_+$ to $\Prob(M)$.
The theorem now follows by Furstenberg's correspondence, see \cite[Theorem 2.16]{Bader2006}.

\subsection{Proof of Theorem \ref{thm:main}}\label{sec:theoremgeneral}
Let $M$, $G$, $(B_-,\nu_-)$ and $(B_+,\nu_+)$ be as in Theorem \ref{thm:main}, and $C$ and $M'$ as promised by Proposition \ref{prop:structure}.
As in the previous section, we will show the statement for $B_-$.
Therefore, the space $B_+$ is mentioned here merely for the application of theorem \ref{thm:nocubicalfactor}.

Denote by $p_{M'}$ the natural projection from $M$ onto $M'$.
This is a $G$-equivariant continuous median map.
Thus, the action of $G$ on $M'$ must be minimal as well.

As $M'$ has no cubical factor, it follows by theorem \ref{thm:nocubicalfactor} that we have a unique $G$-map $\theta\mapsto x_{\theta}$, from $B_-$ to $M'$, which is also the unique map from $B_-$ to $\Cub(M')$.
This map gives rise to a map $\varphi: B_-\rightarrow \Cub(M)$, defined by $\theta\mapsto \{x_{\theta}\}\times C$.


Suppose we have another $G$-map $\varphi':B_-\rightarrow \Cub(M)$.
For a generic $\theta\in B_-$, denote by $I_{\theta}'$ the corresponding convex hull of $\varphi'(\theta)$.
Observe that $p_{M'}$ maps intervals to intervals.
Therefore, we have the following $G$-map:
$$B_-\rightarrow\Cub(M'),\ \theta\mapsto \Ends(p_{M'}(I_{\theta}'))$$

By the uniqueness of $x_{\theta}\in M'$, it follows that $p_{M'}(I_{\theta}')=\{x_{\theta}\}$ for a.e. $\theta\in B_-$.
That is, for a.e. $\theta\in B_-$, $\varphi'(\theta)=\{x_{\theta}\}\times C_{\theta}''$, for some subcube $C_{\theta}''\subset C$.

That is, we have a $G$-equivariant assignment from $B_-$ to the finite set $\Cub(C)$, namely, $\theta\mapsto C_{\theta}''$.
By metric ergodicity, this map is essentially constant.
Denote by $C'\subset C$ its essential image.
As in the case of $M'$, the action of $G$ on $C$ must be minimal as well, and since $C'$ is $G$-invariant, we get an equality $C'=C$.
In other words, $\varphi'$ is nothing but the map $\varphi$.

\bibliographystyle{alpha}
\bibliography{endtobig.bib}

\begin{thebibliography}{Edw65}

\bibitem[BF14]{Bader2014}
U.~Bader and A.~Furman.
\newblock Boundaries, rigidity of representations, and lyapunov exponents.
\newblock volume~3, 2014.

\bibitem[BH99]{Bridson1999}
M.~R. Bridson and A.~Häfliger.
\newblock Metric spaces of non-positive curvature.
\newblock {\em Grundlehren der Mathematischen Wissenschaften}, 319, 1999.

\bibitem[Bow]{bowMed}
B.~H. Bowditch.
\newblock Median algebras.
\newblock Preprint.
  http://homepages.warwick.ac.uk/~masgak/papers/median-algebras.pdf.

\bibitem[BS06]{Bader2006}
U.~Bader and Y.~Shalom.
\newblock Factor and normal subgroup theorems for lattices in products of
  groups.
\newblock {\em Inventiones Mathematicae}, 163, 2006.

\bibitem[BT21]{BaderTaller}
U.~Bader and A.~Taller.
\newblock Balanced measures on compact median algebras.
\newblock 2021.
\newblock Preprint. https://doi.org/10.48550/arXiv.2101.00437.

\bibitem[CFI16]{cfi16}
I.~Chatterji, T.~Fern\'{o}s, and A.~Iozzi.
\newblock The median class and supperigity of actions on cat(0) cube complexes.
\newblock {\em Journal of Topology.}, 9(2):349--400, June 2016.

\bibitem[Edw65]{funcanalysisandapp}
R.~E. Edwards.
\newblock {\em Fnctional analysis: Theory And Applications}.
\newblock Holt, Rinehert and Winston, Inc., 1 edition, 1965.

\bibitem[Fer18]{fer18}
T.~Fern\'{o}s.
\newblock The furstenberg-poisson boundary and cat(0) cube complexes.
\newblock {\em Ergodic Teory Dynam. Systems}, 38(6):2180--2223, 2018.

\bibitem[Fio20]{fioBoundary}
E.~Fioravanti.
\newblock Roller boundaries for median spaces and algebras.
\newblock {\em Algebr. Geom. Topol.}, 20(3):1325--1370, 2020.

\bibitem[FM77]{Feldman1977}
J.~Feldman and C.~C. Moore.
\newblock Ergodic equivalence relations, cohomology, and von neumann algebras.
  i.
\newblock {\em Transactions of the American Mathematical Society}, 234, 1977.

\bibitem[Kec95]{Kechris1995}
A.~S. Kechris.
\newblock {\em Classical Descriptive Set Theory}.
\newblock Springer New York, NY, 1995.

\bibitem[Rol98]{rol98}
M.~A. Roller.
\newblock Poc sets, median algebras and group actions. an extended study of
  dunwoody\text{'}s construction and sageev\text{'}s theorem, 1998.
\newblock Preprint, University of Southampton.
  https://doi.org/10.48550/arXiv.1607.07747.

\bibitem[Tyc35]{tychonoffixedpoint}
A.~Tychonoff.
\newblock Ein fixpunktsatz.
\newblock {\em Math. Ann.}, 111:767--776, 1935.

\bibitem[Zim78]{Zimmer1978}
Robert~J. Zimmer.
\newblock Amenable ergodic group actions and an application to poisson
  boundaries of random walks.
\newblock {\em Journal of Functional Analysis}, 27, 1978.

\end{thebibliography}
\end{document}